    \newcommand{\BA}{{\mathbb {A}}} \newcommand{\BB}{{\mathbb {B}}}
    \newcommand{\BC}{{\mathbb {C}}} 
     \newcommand{\BH}{{\mathbb {H}}}
    \newcommand{\BQ}{{\mathbb {Q}}} \newcommand{\BR}{{\mathbb {R}}}
     \newcommand{\BZ}{{\mathbb {Z}}}
     \renewcommand{\CD}{{\mathcal {D}}}
    \newcommand{\CE}{{\mathcal {E}}} 
     \newcommand{\CH}{{\mathcal {H}}}
    \newcommand{\CK}{{\mathcal {K}}} \newcommand{\CL}{{\mathcal {L}}}
     \newcommand{\CN}{{\mathcal {N}}}
    \newcommand{\CO}{{\mathcal {O}}} \newcommand{\CP}{{\mathcal {P}}}
     \newcommand{\CR}{{\mathcal {R}}}
    \newcommand{\CS}{{\mathcal {S}}} 
    \newcommand{\CW}{{\mathcal {W}}} \newcommand{\CX}{{\mathcal {X}}}
     \newcommand{\RN}{{\mathrm {N}}}
    \newcommand{\fa}{{\mathfrak{a}}}
    \newcommand{\fg}{{\mathfrak{g}}}
     \newcommand{\fn}{{\mathfrak{n}}}
     \newcommand{\fp}{{\mathfrak{p}}}
    \newcommand{\ab}{{\mathrm{ab}}}
    \newcommand{\ad}{{\mathrm{ad}}}
    \newcommand{\Aut}{{\mathrm{Aut}}}
    \newcommand{\End}{{\mathrm{End}}}
    \newcommand{\Gal}{{\mathrm{Gal}}} \newcommand{\GL}{{\mathrm{GL}}}
    \newcommand{\Hom}{{\mathrm{Hom}}}
    \newcommand{\id}{{\mathrm{id}}}
    \newcommand{\Ind}{{\mathrm{Ind}}}
     \newcommand{\JL}{{\mathrm{JL}}}
    \newcommand{\length}{{\mathrm{length}}}
    \newcommand{\Lie}{{\mathrm{Lie}}}
    \newcommand{\Mass}{{\mathrm{Mass}}}
    \newcommand{\ord}{{\mathrm{ord}}} \newcommand{\rank}{{\mathrm{rank}}}
     \newcommand{\Pic}{\mathrm{Pic}}
    \newcommand{\pr}{{\mathrm{pr}}} 
    \renewcommand{\mod}{\ \mathrm{mod}\ }\renewcommand{\Re}{{\mathrm{Re}}}
    \newcommand{\Res}{{\mathrm{Res}}}
    \newcommand{\SL}{{\mathrm{SL}}}
    \newcommand{\SO}{{\mathrm{SO}}}
    \newcommand{\SU}{{\mathrm{SU}}}
    \newcommand{\sgn}{{\mathrm{sgn}}}
    \newcommand{\tr}{{\mathrm{tr}}}\newcommand{\tor}{{\mathrm{tor}}}
    \newcommand{\RTr}{{\mathrm{Tr}}}
    \newcommand{\Vol}{{\mathrm{Vol}}}
    \newcommand{\Pet}{{\mathrm{Pet}}}
\newcommand{\matrixx}[4]{\begin{pmatrix}
#1 & #2 \\ #3 & #4
\end{pmatrix} }        
    \font\cyr=wncyr10
    \newcommand{\Sha}{\hbox{\cyr X}}\newcommand{\wt}{\widetilde}
    \newcommand{\wh}{\widehat}
    \newcommand{\pair}[1]{\langle {#1} \rangle}
    \newcommand{\ov}{\overline}
    \newcommand{\sk}{\medskip}
    \newcommand{\lra}{\longrightarrow}
    \newcommand{\ra}{\rightarrow} 
    \newcommand{\lto}{\longmapsto}\newcommand{\bs}{\backslash}
    \newcommand{\s}{\sk\noindent}
    \theoremstyle{plain}
    \newtheorem{thm}{Theorem}[section] 
    \newtheorem{lem}[thm]{Lemma}  \newtheorem{prop}[thm]{Proposition}
     \newtheorem{defn}[thm]{Definition}
\theoremstyle{remark} 
\theoremstyle{remark} 
\theoremstyle{remark} 
    \newcommand{\Neron}{N\'{e}ron~}\newcommand{\adele}{ad\'{e}le~}
    \newcommand{\adelic}{ad\'{e}lic}
    \numberwithin{equation}{section}
\begin{document}

\title{Explicit Gross-Zagier and Waldspurger Formulae}

\author{Li Cai, Jie Shu,  and Ye Tian}

\address{Li Cai: Mathematical Sciences Center, Tsinghua University, Beijing
100084} \email{lcai@math.tsinghua.edu.cn}

\address{Jie Shu: Academy of Mathematics and Systems
Science, Morningside center of Mathematics, Chinese Academy of
Sciences, Beijing 100190} \email{shujie09@mails.gucas.ac.cn}

\address{Ye Tian: Academy of Mathematics and Systems
Science, Morningside center of Mathematics, Chinese Academy of
Sciences, Beijing 100190} \email{ytian@math.ac.cn}
\thanks{Ye Tian was supported  by grants  NSFC  11325106 and NSFC 11321101.}

\maketitle

\tableofcontents

\section{Main Results}
\subsection{Introduction}

The Gross-Zagier formula and the Waldspurger formula are probably the two most important analytic tools known at present for studying the still largely unproven conjecture of Birch and Swinnerton-Dyer.
Much work has already been done on both formulae. In particular, the recent book by Yuan-Zhang-Zhang \cite{YZZ} establishes what is probably the most general
case of the Gross-Zagier formula. Nevertheless, when it comes to actual applications to the arithmetic of elliptic curves or abelian varieties, one very often needs a
more explicit form of the Gross-Zagier formula than that given in \cite{YZZ}, and similarly a more explicit form of the Waldspurger formula than one finds in the existing literature.  This is clearly illustrated, for example, by the papers \cite{BD},
\cite{Tian2}, \cite{TYZ}, \cite{CLTZ}. Our aim here is to establish what we believe are the most general explicit versions of both formulae,
namely Theorems \ref{GZ} and \ref{variation1} for the Gross-Zagier formula,
and Theorems \ref{W} and \ref{variation2} for the Waldspurger formula. Our methods have been directly inspired by \cite{YZZ},
and also the ideas of Gross \cite{Gross} and Gross-Prasad \cite{GP}.

\medskip

In the remainder of this introduction, we would like to explain in detail our explicit formulae in the simplest, and most important, case of modular forms over $\BQ$. Let $\phi$ be a newform of weight $2$, level $\Gamma_0(N)$, with Fourier expansion $\phi=\sum_{n=1}^\infty a_n q^n$ normalized such that $a_1=1$. Let $K$ be an imaginary quadratic field of discriminant $D$ and $\chi$ a primitive ring class character over $K$ of conductor $c$, i.e. a character of $\Pic(\CO_c)$ where $\CO_c$ is the order $\BZ+c\CO_K$ of $K$.  Assume the Heegner condition (first introduced by Birch in a special case):-
\begin{enumerate}
\item  $(c, N)=1$,  and no prime divisor $p$ of $N$ is  inert in $K$,  and  also $p$ must be split in $K$ if $p^2|N$.
\item  $\chi([\fp])\neq a_p$ for any prime $p|(N, D)$, where $\fp$ is the unique prime ideal of $\CO_K$ above $p$  and $[\fp]$  is its class in $\Pic(\CO_c)$.
\end{enumerate}
 Let $L(s, \phi, \chi)$ be the Rankin L-series of $\phi$ and the theta series $\phi_\chi$ associated to $\chi$ (without the local Euler factor at infinity). It follows from the Heegner condition that the sign in the functional  equation of $L(s, \phi, \chi)$ is $-1$. Let $(\phi,\phi)_{\Gamma_0(N)}$ denote the Petersson norm of $\phi$:
$$(\phi, \phi)_{\Gamma_0(N)}=\iint_{\Gamma_0(N)\bs \CH} |\phi(z)|^2 dx dy, \qquad z=x+iy.$$
\noindent Let $X_0(N)$ be the modular curve over $\BQ$, whose $\BC$-points parametrize isogenies $E_1\ra E_2$ between elliptic curves over $\BC$ whose kernel  is cyclic of order $N$. By  the Heegner condition, there exists a proper ideal $\CN$ of $\CO_c$ such that $\CO_c/\CN \cong \BZ/N\BZ$. For any proper ideal $\fa$ of $\CO_c$, let $P_\fa\in X_0(N)$ be the point representing the isogeny $\BC/\fa \ra \BC/\fa\CN^{-1}$, which is defined over the ring class field $H_c$ over $K$ of conductor $c$, and only depends on the class of $\fa$ in $\Pic(\CO_c)$.  Let $J_0(N)$ be the Jacobian of $X_0(N)$.  Writing $\infty$ for the cusp at infinity on $X_0(N)$, we have the morphism from $X_0(N)$ to $J_0(N)$ over $\BQ$ given by $P\mapsto [P-\infty]$. Let $P_\chi$  be the point $$P_\chi=\sum_{[\fa]\in \Pic(\CO_c)} [P_\fa-\infty]\otimes \chi([\fa])\in J_0(N)(H_c)\otimes_\BZ \BC, $$ and write $P_\chi^\phi$  for the  $\phi$-isotypical component of $P_\chi$.

\medskip

 The following theorem was proved by Gross-Zagier in the case $c=1$ in their celebrated work \cite{GZ}, and follows immediately from the general explicit Gross-Zagier formula in Theorem \ref{GZ} (see the special case 2,  and the example after Theorem \ref{GZ}.)
\begin{thm} \label{A} Let $\phi, \chi$ be as above satisfying the Heegner conditions (1) and (2).  Then
$$L'(1, \phi, \chi)=2^{-\mu(N, D)} \cdot \frac{8\pi^2 (\phi, \phi)_{\Gamma_0(N)}}{u^2 \sqrt{|Dc^2|}} \cdot \wh{h}_K(P_\chi^{\phi}),$$
where $\mu(N, D)$ is the number of prime factors of the greatest common divisor of $N$ and $D$, $u=[\CO_c^\times:\BZ^\times]$ is half of the number of roots of unity in $\CO_c$, and $\wh{h}_K$ is the \Neron-Tate height on $J_0(N)$ over $K$.
\noindent In particular, if $\phi$ is associated to an elliptic curve $E$ over $\BQ$  via Eichler-Shimura theory and $f: X_0(N)\ra E$ is a modular parametrization mapping the cusp $\infty$ to the identity $O\in E$, then the Heegner divisor $P^0_\chi(f):=\sum_{[\fa] \in \Pic(\CO_c)} f(P_\fa)\otimes \chi([\fa])\in E(H_c)_\BC$ satisfies
$$L'(1, E, \chi)=2^{-\mu(N, D)} \cdot \frac{8\pi^2 (\phi, \phi)_{\Gamma_0(N)}}{u^2 \sqrt{|Dc^2|}} \cdot \frac{\wh{h}_K(P^0_\chi(f))}{\deg f},$$
where $\wh{h}_K$ is the \Neron-Tate height on $E$ over $K$ and $\deg f$ is the degree of the morphism $f$.
\end{thm}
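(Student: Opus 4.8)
The plan is to deduce Theorem~\ref{A} from the general explicit Gross--Zagier formula Theorem~\ref{GZ} by specializing all of the adelic data to the classical setting over $\BQ$ and carefully tracking the normalizations. First I would translate the input into automorphic terms: the newform $\phi$ of weight $2$ and level $\Gamma_0(N)$ gives a cuspidal automorphic representation $\pi=\otimes_v\pi_v$ of $\GL_2(\BA_\BQ)$ with trivial central character, and the ring class character $\chi$ of $\Pic(\CO_c)$ gives a finite order Hecke character of $K^\times\bs\BA_K^\times$ that is trivial at $\infty$ and unramified outside $c$ in the appropriate sense. I would then verify that Heegner condition~(1) forces the incoherent quaternion algebra attached to the pair $(\pi,\chi)$ in Theorem~\ref{GZ} to be ramified only at $\infty$: at primes $p\nmid N$ and at split primes the local sign is $+1$; at $p\mid(N,D)$ one has $p\|N$ (condition~(1) rules out $p^2\mid N$ at a ramified prime) with $\pi_p$ special, and Heegner condition~(2), $\chi([\fp])\ne a_p$, guarantees that the local Gross--Prasad functional is nonzero on the split local quaternion algebra and that the local sign is again $+1$. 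Hence the Shimura curve in Theorem~\ref{GZ} may be taken to be $X_0(N)$ associated with the Eichler order of level $N$ in $M_2(\BQ)$; the proper ideal $\CN$ with $\CO_c/\CN\cong\BZ/N\BZ$ supplies exactly the needed $\Gamma_0(N)$-level structure, so that the CM points of the general theorem become the Heegner points $P_\fa$, and the adelic toric orbit is identified with $\Pic(\CO_c)$.

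Next I would run the dictionary between the automorphic and classical quantities on the two sides of Theorem~\ref{GZ}. After the shift $s\mapsto s+\tfrac12$, the automorphic Rankin--Selberg $L$-function of $\pi$ and $\chi$ equals $L(s,\phi,\chi)$ as normalized above; the adelic Petersson pairing of the chosen new test vector equals $(\phi,\phi)_{\Gamma_0(N)}$ up to an explicit volume constant; the canonical height of the normalized CM divisor equals $\wh{h}_K(P_\chi^\phi)$; and the product of local period integrals in Theorem~\ref{GZ} factors as $1$ at the places where everything is unramified, as $\tfrac12$ at each $p\mid(N,D)$ (this is precisely where condition~(2) is used, and it produces the global factor $2^{-\mu(N,D)}$), as the conductor contributions that assemble into $\sqrt{|Dc^2|}$ at the places dividing $Dc$, and at $\infty$ as the archimedean local integral, which together with the normalizations of the complex uniformization of $X_0(N)$ and of the height pairing yields the factor $8\pi^2$. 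The torsion factor $u^2=[\CO_c^\times:\BZ^\times]^2$ enters from the volume of the CM torus $K^\times\bs\BA_K^\times/\BC^\times\wh\CO_c^\times$ relative to $\#\Pic(\CO_c)$, i.e. from the self-intersection normalization of the Heegner divisor. Assembling these identifications inside Theorem~\ref{GZ} (its special case~2 and the example following it) gives the first displayed formula of Theorem~\ref{A}.

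For the elliptic curve statement, let $E/\BQ$ correspond to $\phi$ via Eichler--Shimura and let $f\colon X_0(N)\to E$ be a modular parametrization with $f(\infty)=O$. Then $f$ induces $f_*\colon J_0(N)\to E$ with $f_*(P_\chi)=P^0_\chi(f)$; since $E$ is $\phi$-isotypical, $f_*$ annihilates all components of $P_\chi$ other than $P_\chi^\phi$, so that $f_*(P_\chi^\phi)=P^0_\chi(f)$. By the standard functoriality of the N\'eron--Tate height under $f$ --- using the adjunction $\langle f^*x,y\rangle_{J_0(N)}=\langle x,f_*y\rangle_E$ together with $f_*\circ f^*=[\deg f]$ and the identification of the $\phi$-isotypical part of $J_0(N)$ with $E$ up to $\deg f$ --- one obtains $\wh{h}_K(P^0_\chi(f))=\deg f\cdot\wh{h}_K(P_\chi^\phi)$. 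Substituting this together with $L(s,\phi,\chi)=L(s,E,\chi)$ into the first formula yields the second.

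I expect the main obstacle to be the precise bookkeeping of the local and archimedean constants. Pinning down the archimedean factor $8\pi^2$ requires carefully matching the normalization of the hyperbolic measure defining $(\phi,\phi)_{\Gamma_0(N)}$, the complex uniformization of $X_0(N)$, and the height pairing used in Theorem~\ref{GZ} with the archimedean local computation (ultimately going back to \cite{GZ} and \cite{YZZ}). Equally delicate is verifying that each ramified prime $p\mid(N,D)$ contributes exactly $\tfrac12$: this rests on a local computation of the Gross--Prasad functional for a special representation twisted by a ramified quadratic character, and it is exactly here that hypothesis~(2) is indispensable. A secondary subtlety is to check that the newform $\phi$, rather than some other vector in $\pi$, is the correct local test vector at every finite place, so that the identity of Theorem~\ref{GZ} applies verbatim with the classical Heegner divisor $P_\chi$.
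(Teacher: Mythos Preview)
Your overall strategy---specialize Theorem~\ref{GZ} to $F=\BQ$ and the modular curve $X_0(N)$---is exactly what the paper does (see the Example immediately following Theorem~\ref{GZ}), and your sign analysis at $p\mid(N,D)$ via Tunnell--Saito and condition~(2) is correct.

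However, you are significantly over-complicating the deduction. Theorem~\ref{GZ} is already a fully explicit formula: there are \emph{no} local period integrals left to compute when applying it. The constants you worry about pinning down---$8\pi^2$, the factor $2^{-\mu(N,D)}$, the $\sqrt{|Dc^2|}$---are already sitting in the statement of Theorem~\ref{GZ} as $(8\pi^2)^d$, $2^{-\#\Sigma_D}$, and $\sqrt{|D_K|\|c_1^2\|}$. The actual work is purely combinatorial bookkeeping: under the Heegner conditions one checks that $d=1$; that $\Sigma=\emptyset$ (since any $v\mid(N,Dc)$ has $v\mid(N,D)$, $v\|N$, and $\ord_v(c/N)=-1<0$), so $L^{'(\Sigma)}=L'$; that $\Sigma_D=\{p:p\mid(N,D)\}$, so $\#\Sigma_D=\mu(N,D)$; that $c_1=c$ (since $\Sigma_1\subset\{p\mid N\}$ and $(c,N)=1$); and that $u_1=[\CO_c^\times:\BZ^\times]=u$ (since $\kappa_c$ is trivial over $\BQ$). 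Condition~(2) is not used to produce the factor $\tfrac12$; it is used exactly once, to verify via Lemma~\ref{Tunnell-Satio}(3) that $\epsilon(\BB_p)=+1$ at $p\mid(N,D)$, i.e.\ that the hypothesis of Theorem~\ref{GZ} holds with $\BB$ split away from~$\infty$.

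The one genuinely nontrivial point you underemphasize is verifying that the modular parametrization $f:X_0(N)\to A$ lies in $V(\pi_A,\chi)$. The paper handles this by choosing the embedding $K\hookrightarrow M_2(\BQ)$ so that the standard Eichler order $R_0(N)$ satisfies $\wh{R_0(N)}\cap\wh K=\wh\CO_c$ and is admissible; then $f\in\pi_A^{U_0(N)}$, and at each $p\mid(N,D)$ Proposition~\ref{local-multi-one-2} gives $\dim\pi_p^{R_p^\times}=1$ (using that $\epsilon(\pi_p,\chi_1)\neq\epsilon(\pi_p,\chi_2)$ in the ramified case), so the $\chi_p^{-1}$-eigen condition is automatic. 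Your treatment of the elliptic-curve corollary via $f_*$ and height functoriality is fine.
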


\medskip

 Comparing the above Gross-Zagier formula with the conjecture of Birch and Swinnerton-Dyer for $L(E/K, s)$, we immediately
 obtain the following:

\s{\bf Conjecture}. {\em Let $E$ be an elliptic curve defined over $\BQ$ of conductor $N$,  and let $K$ an imaginary quadratic field of discriminant $D$ such that for any prime $\ell$ dividing $N$, either $\ell$ splits in $K$, or $\ell$ is ramified in $K$ and $\ell^2$ exactly divides $N$.
Let $f: X_0(N)\ra E$ be a modular parametrization mapping $\infty$ to $O$. Let $\CN\subset \CO_K$ be any ideal with $\CO_K/\CN\cong \BZ/N\BZ$,  let $P\in X_0(N)(H_K)$ be the point representing the isogeny $(\BC/\CO_K\ra \BC/\CN^{-1})$, and write $P_K(f):=\RTr_{H_K/K} f(P)\in E(K)$. Assume $P_K(f)$  is not torsion. Then
$$\sqrt{\#\Sha(E/K)}= 2^{-\mu(N, D)}\cdot \frac{[E(K): \BZ P_K(f)]}{C\cdot [\CO_K^\times:\BZ^\times]\cdot \prod_{\ell|\frac{N}{(N, D)}} m_\ell},$$
where $m_\ell=[E(\BQ_\ell): E^0(\BQ_\ell)]$,  and $C$ is the positive integer such that if $\omega_0$ is a \Neron differential on $E$ then
$f^*\omega_0=\pm C\cdot 2\pi i \phi(z)dz$.}
\bigskip

We next state our explicit Waldspurger formula over $\BQ$. Let $\phi=\sum_{n=1}^\infty a_n q^n \in S_2(\Gamma_0(N))$ be a newform of weight $2$ and level $\Gamma_0(N)$. Let $K$ be an imaginary  quadratic field and $\chi: \Gal(H_c/K)\lra \BC^\times$ a character of conductor $c$. Assume the following  conditions:
\begin{enumerate}
\item[(i)] $(c, N)=1$ and if $p|(N, D)$, then $p^2\nmid N$;
\item[(ii)] let $S$ be the set of places $p|N\infty$ non-split in $K$ such that for a finite prime $p$,  $\ord_p(N)$ is odd if $p$ is inert in $K$, and  $\chi([\fp])=a_p$ if $p$ is ramified in $K$. Then   $S$  has even cardinality.
\end{enumerate}
It follows that the sign of the functional equation of the Rankin L-series $L(s, \phi, \chi)$ is $+1$.
\noindent Let $B$ be the quaternion algebra over $\BQ$ ramified exactly at places in $S$. Note that the condition (ii) implies that there exists an embedding of $K$ into $B$ which we fix once and for all. Let $R\subset B$ be an order of discriminant $N$ with $R\cap K=\CO_c$. Such an order exists and is unique up to conjugation by $\wh{K}^\times$. Here, for an abelian group $M$, we define $\wh{M}=M\otimes_\BZ \wh{\BZ}$, where $\wh{\BZ}=\prod_p \BZ_p$, with $p$ running over all primes. By the reduction theory of definite quadratic forms, the coset $X:=B^\times \bs \wh{B}^\times/\wh{R}^\times$ is finite,   say of order $n$. Let $g_1, \cdots, g_n$ in $\wh{B}^\times$ represent  the distinct classes $[g_1], \cdots. [g_n]$. For each $i=1, \cdots, n$, let $\Gamma_i=(B^\times\cap g_i \wh{B}^\times g_i^{-1})/\{\pm 1\}$. Then $\Gamma_i$ is a finite group, and we denote its order by $w_i$.
Let $\BZ[X]$ denote the free $\BZ$-module of  formal sums $\sum_{i=1}^n a_i [g_i]$ with $a_i\in\BZ$, and define a height pairing on $\BZ[X]$ by
$$\pair{\sum a_i [g_i], \sum b_i [g_i]}=\sum_{i=1}^n a_i b_i w_i,$$
which is positive definite on $\BR[X]:=\BZ[X]\otimes_\BZ \BR$  and has a natural Hermitian extension to $\BC[X]:=\BZ[X]\otimes_\BZ \BC$. Define the degree of a vector $\sum a_i[g_i]\in \BZ[X]$ to be $\sum a_i$ and let $\BZ[X]^0$ denote the degree 0 submodule of $\BZ[X]$.
Then $\BZ[X]$ and $\BZ[X]^0$ are endowed with actions of Hecke operators $T_p, S_p, p\nmid N$ which are linear and defined as follows. For any prime $p\nmid N$, $B_p^\times/R_p^\times \cong \GL_2(\BQ_p)/\GL_2(\BZ_p)$ can be identified with the set of $\BZ_p$-lattices  in a $2$-dimensional vector space over $\BQ_p$. Then for any $g=(g_v)\in \wh{B}^\times$,
$$S_p([g])=[g^{(p)} s_p(g_p)], \qquad T_p([g])=\sum_{h_p} [g^{(p)} h_p],$$
where $g^{(p)}$ is the $p$-off part of $g$, namely $g^{(p)}=(g^{(p)}_v)$ with $g^{(p)}_v=g_v$ for all $v\neq p$ and $g^{(p)}_p=1$,  and if $g_p$ corresponds to lattice $\Lambda$, then $s_p(g_p)$ is the coset corresponding to the homothetic lattice $p\Lambda$, and $h_p$ runs over $p+1$ lattices $\Lambda'\subset \Lambda$ with $[\Lambda: \Lambda']=p$.  There is a unique line $V_\phi\subset  \BC[X]^0$ where $T_p$ acts as $a_p$ and $S_p$ acts trivially for all $p\nmid N$.   Recall that the fixed embedding of $K$ into $B$ induces a map
$$\Pic(\CO_c)=K^\times \bs \wh{K}^\times/ \wh{\CO}_c^\times \lra X=B^\times\bs \wh{B}^\times/\wh{R}^\times, \quad t\mapsto x_t,$$
using which we define an element in $\BC[X]$,
 $$P_\chi:=\sum \chi^{-1}(t) x_t$$
 and let $P_\chi^\phi$ be its projection to the line $V_\phi$.
\noindent The following explicit height formula for $P_\chi^\phi$, which was proved by Gross in some case in \cite{Gross1}, is a special case of the explicit Waldspurger formulas in Theorems \ref{W} and \ref{G} (with Proposition \ref{local-multi-one-2}).
\begin{thm}\label{B} Let $(\phi, \chi)$ be as above satisfying the conditions (i) and (ii). Then we have
$$L(1, \phi, \chi)=2^{-\mu(N, D)} \cdot \frac{8\pi^2(\phi, \phi)_{\Gamma_0(N)}}{u^2 \sqrt{|Dc^2|}} \cdot \pair{P_\chi^\phi, P_\chi^\phi},$$
where $\mu(N, D)$ is the number of prime factors of the greatest common divisor of $N$ and  $D$, $u=[\CO_c^\times:\BZ^\times]$ is half of number of roots of unity in $\CO_c$.  Let  $f=\sum_i f(g_i) w_i^{-1} [g_i]$ be any non-zero vector on the line $V_\phi$ and let $P^0_\chi(f)=\sum_{t\in \Pic(\CO_c)} f(t) \chi(t)$.  Then the above formula can be rewritten as
$$L(1, \phi, \chi)=2^{-\mu(N, D)} \cdot \frac{8\pi^2(\phi, \phi)_{\Gamma_0(N)}}{u^2 \sqrt{|Dc^2|}} \cdot \frac{|P^0_\chi(f)|^2}{\pair{f, f}}.$$
\end{thm}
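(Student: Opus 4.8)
The plan is to deduce Theorem \ref{B} from the general explicit Waldspurger formula of Theorems \ref{W} and \ref{G}, together with the local computation of Proposition \ref{local-multi-one-2}, by translating the classical data into automorphic data and then tracking all normalizations. First I would attach to $\phi$ the cuspidal automorphic representation $\pi_\phi$ of $\GL_2(\BA_\BQ)$ it generates and to $\chi$ the corresponding finite-order Hecke character of $K^\times\bs\BA_K^\times$ of conductor $c$; after the weight-two shift $s\mapsto s-1/2$, the Rankin L-series $L(s,\phi,\chi)$ is the base-changed value $L(s-1/2,\pi_{\phi,K}\otimes\chi)$ with its archimedean factor removed. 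Conditions (i) and (ii) are exactly the local root-number conditions that force the relevant quaternion algebra to be the definite $B$ ramified along $S$, that permit the fixed embedding $K\incl B$, and that make every local toric functional $\Hom_{K_v^\times}(\pi^B_v\otimes\chi_v,\BC)$ nonzero, so that the Jacquet--Langlands transfer $\pi^B$ of $\pi_\phi$ to $B^\times$ exists with non-vanishing global toric period. The $\wh{R}^\times$-fixed subspace of $\pi^B$, regarded as $\BC$-valued functions on $X=B^\times\bs\wh{B}^\times/\wh{R}^\times$, is precisely the line $V_\phi\subset\BC[X]^0$ on which $T_p$ acts by $a_p$ and $S_p$ trivially for all $p\nmid N$.

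Next I would apply Theorems \ref{W} and \ref{G}: for a suitable vector $f^B\in\pi^B$ the central value $L(1/2,\pi_{\phi,K}\otimes\chi)$ equals $\bigl|\int_{K^\times\BA_\BQ^\times\bs\BA_K^\times}f^B(t)\chi(t)\,dt\bigr|^2/(f^B,f^B)$ times an explicit, essentially finite product $\prod_v\alpha_v$ of normalized local toric integrals. Proposition \ref{local-multi-one-2} then evaluates the $\alpha_v$: at finite places split in $K$ and prime to $N$ they equal $1$; the archimedean place, together with the global Tamagawa number and the chosen measures, contributes the constant $8\pi^2/(u^2\sqrt{|Dc^2|})$, where $\sqrt{|Dc^2|}$ is the absolute discriminant of $\CO_c$ and $u=[\CO_c^\times:\BZ^\times]$ enters through $\vol(K^\times\BA_\BQ^\times\bs\BA_K^\times)$ normalized relative to $\wh{\CO}_c^\times$; and each prime $p\mid(N,D)$ --- ramified in $K$ and, by (i), exactly dividing $N$ --- contributes an extra factor $\tfrac12$ (whether or not $p\in S$, i.e. whether or not $\chi([\fp])=a_p$), which accounts for the overall $2^{-\mu(N,D)}$. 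The comparison of the adelic Petersson pairing $(f^B,f^B)$ with the classical $(\phi,\phi)_{\Gamma_0(N)}$ folds into this same bookkeeping.

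It remains to recognize the toric period as $\pair{P_\chi^\phi,P_\chi^\phi}$. Since $\wh{R}^\times\cap\wh{K}^\times=\wh{\CO}_c^\times$ and $f^B$ is $\wh{R}^\times$-invariant, the restriction of $f^B$ to the torus descends to a function on $K^\times\bs\wh{K}^\times/\wh{\CO}_c^\times=\Pic(\CO_c)$, while $B$ definite and $\pi_{\phi,\infty}$ of weight two make $\pi^B_\infty$ trivial, so $f^B$ is constant on $B_\infty^\times$; hence the toric integral collapses to the finite sum $\sum_{t\in\Pic(\CO_c)}f^B(x_t)\chi(t)=P^0_\chi(f)$, up to a volume already absorbed above. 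The weights $w_i=\#\Gamma_i$ appear because the intrinsic measure on the groupoid $B^\times\bs\wh{B}^\times/\wh{R}^\times$ is the mass measure, which is exactly what makes $\pair{\sum a_i[g_i],\sum b_i[g_i]}=\sum a_ib_iw_i$ the correct inner product and $(f^B,f^B)=\pair{f,f}$. A short linear-algebra step then gives $\pair{P_\chi,f}=\overline{P^0_\chi(f)}$ and, for $f$ spanning $V_\phi$, $P_\chi^\phi=\pair{P_\chi,f}\,\pair{f,f}^{-1}f$, whence $\pair{P_\chi^\phi,P_\chi^\phi}=|P^0_\chi(f)|^2/\pair{f,f}$; this yields both displayed forms of the formula.

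I expect the main obstacle to be the precise bookkeeping of local constants and measure normalizations rather than any new analytic input: in particular, pinning down the archimedean factor $8\pi^2$, the exact power of $2$ contributed by the primes dividing $(N,D)$, and the simultaneous comparison of the adelic Petersson norm with $(\phi,\phi)_{\Gamma_0(N)}$ and of the adelic toric measure with the counting measure on $\Pic(\CO_c)$. The representation-theoretic ingredients --- Jacquet--Langlands transfer, local multiplicity one and nonvanishing, and the choice of test vector making $P_\chi$ optimal --- are supplied by Theorems \ref{W}, \ref{G} and Proposition \ref{local-multi-one-2}, so the remaining work lies entirely in assembling the translation dictionary correctly.
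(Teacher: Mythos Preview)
Your approach is correct and matches the paper's: Theorem \ref{B} is precisely the $F=\BQ$, weight-two specialization of Theorem \ref{G} (which in turn is derived from Theorem \ref{W}), and the paper says nothing more than that. One small correction to your attribution: Proposition \ref{local-multi-one-2} does not evaluate any local toric integrals---those are already absorbed into the proof of Theorem \ref{W} via Lemmas \ref{p-adic} and \ref{archimedean}---its sole role here is the inclusion $V(\pi,\chi)\subset\pi^{R^\times}$, which is exactly what lets you identify the abstract test vector with a point on the classical line $V_\phi\subset\BC[X]^0$.
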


\s{\bf Acknowledgements}. The authors thank J. Coates, H. Darmon,  B. Gross, D. Prasad, W. Xiong,  X. Yuan, S. Zhang, and W. Zhang for encouragement and helpful discussions.

\medskip

\s{\bf Notations for First Two Sections}. We denote by $F$ the base number field of degree $d=[F:\BQ]$ over $\BQ$ and $\CO=\CO_F$ its ring of integers with different $\delta$.  Let $\BA=F_\BA$ be the \adele ring of $F$ and $\BA_f$ its finite part. For any $\BZ$-module $M$, we denote by $\wh{M}=M\otimes_\BZ \wh{\BZ}$ and $\wh{\BZ}=\prod_p \BZ_p$. For example, $\wh{F}=\BA_f$. Let $|\ |_\BA: \BA^\times \lra \BR_+^\times$ denote the standard \adelic\ absolute value so that $d(ab)=|a|_\BA db$ for any Haar measure $db$ on $\BA$. Let $|\ |_v$ denote the absolute value on $F_v^\times$ for each place $v$ of $F$ such that $|x|_\BA=\prod_v |x_v|_v$ for any $x=(x_v)\in \BA^\times$. For any non-zero fractional ideal $b$ of $F$, let $\|b \|$ denote the norm of $b$.  For any $x\in \BA_f^\times$ we also write $\|x\|$ for $\|b_x\|$ where $b_x$ is the ideal corresponding to $x$ so that $\|x\|=|x|_\BA^{-1}$ and for any non-zero fractional ideal $b$ we also write $|b|_\BA$ for $|x_b|_\BA$ with any $x_b\in \BA_f^\times$ whose corresponding ideal is $b$ so that $|b|_\BA=\|b\|^{-1}$. For a finite place $v$, sometimes we also denote by $v$ its corresponding prime ideal and  $q_v=\# \CO/v$.  For a fractional ideal $b$ of $F$,  we write $|b|_v=|x_b|_v$ for $x_b\in F_v$ with $x_b\CO_v=b\CO_v$, denote by  $\ord_v(b)$ the additive valuation of $b$ at $v$ so that $\ord_v(v)=1$ and write $v\| b$ if $\ord_v(b)=1$. We denote by $\infty$  the set of infinite places of $F$.  Denote by $L(s, 1_F)$ the complete L-series for the trivial Hecke character $1_F$ on $\BA^\times$ so that $L(s, 1_F)=\Gamma_\BR(s)^{r_1}\Gamma_\BC(s)^{r_2} \zeta_F(s)$, where $r_1$ (resp $r_2$) is the number of real (resp. complex) places of $F$, $\zeta_F(s)$ is the usual Dedekind zeta-function of $F$, $\Gamma_\BR(s)=\pi^{-s/2}\Gamma(s/2)$, and $\Gamma_\BC(s)=2(2\pi)^{-s}\Gamma(s)$. For each place $v$ of $F$, let $L(s, 1_v)$ denote the local Euler factor of $L(s, 1_F)$ at $v$. Let $D_F$ denote the absolute discriminant of $F$ and $\delta \subset \CO$ the different of $F$ so that $\|\delta\|=|D_F|$.

In the first two sections, we denote by $K$ a quadratic extension over $F$, $D=D_{K/F}\subset \CO$ the relative discriminant of $K$ over $F$, and $D_K$ the absolute discriminant of $K$. Let $K^\ab$ denote the maximal abelian extension over $K$ and let $\sigma: K_\BA^\times/K^\times \ra \Gal(K^\ab/K)$ denote the Artin reciprocity map in the class field theory. For any non-zero ideal $b$ of $\CO$ let $\CO_b=\CO+b\CO_K$ be the unique $\CO$-order of $K$ satisfying $[\CO_K: \CO_b]=\# \CO/b$ and we call $b$ its conductor. For any finite place $v$ of $F$, $\CO_{b, v}=\CO_b\otimes_{\CO} \CO_v$ only depends on $\ord_v b$. Thus for a fractional ideal $b$  and a finite place $v$ of $F$, $\CO_{b, v}$ makes sense if $\ord_v b\geq 0$. Let $\Pic_{K/F}(\CO_b)=\wh{K}^\times/K^\times \wh{F}^\times \wh{\CO}_b^\times$. Then there is an exact sequence
$$\Pic(\CO_F)\ra \Pic (\CO_b) \ra \Pic_{K/F}(\CO_b)\ra 0.$$
Let $\kappa_b$ be the kernel of the first arrow, which has order $1$ or $2$   in the case $F$ is totally real and $K$ is a totally imaginary quadratic extension over $F$ (see Theorem 10.3 in \cite{Washington}).

For any algebraic group $G$ over $F$, let $G_\BA=G(\BA)$ be the group of \adelic\ points on $G$. For a finite set $S$ of places of $F$, let $G_S=\prod_{v\in S} G(F_v)$ (resp. $G_\BA^{(S)}=G(\BA)^{(S)}$) the $S$-part of $G_\BA$ (resp. the $S$-off part of $G_\BA$)  viewed as a subgroup of $G_\BA$ naturally so that the $S$-off components (resp, $S$-components) are constant $1$.  More general, for a subgroup $U$ of $G_\BA$ of form $U=U_T U^T$ for some set $T$ of places disjoint with $S$ where $U_T\subset \prod_{v\in T} G(F_v)$ and $U^T=\prod_{v\notin T} U_v$ with $U_v$ a subgroup of $G(F_v)$, we may define $U^{(S)}$, $U_S$,  and view them as  subgroups of $U$ similarly.  For any ideal $b$ of $\CO$, we also write $U^{(b)}$ for $U^{(S_b)}$, and $U_b$ for $U_{S_b}$,  with $S_b$ the set of places dividing $b$. Let $U_0(N)$ and $U_1(N)$ denote subgroups of $\GL_2(\wh{\CO})$ defined by
$$U_0(N)=\left\{ \matrixx{a}{b}{c}{d}\in \GL_2(\wh{\CO})\Big| c\in N\wh{\CO}\right\}, \qquad U_1(N)=\left\{ \matrixx{a}{b}{c}{d}\in U_0(N)\Big| d\equiv 1\mod N\wh{\CO}\right\}.$$
When $F$ is totally real field and $\sigma$ is an automorphic cuspidal representation of level $N$ such that $\sigma_v$ is a discrete series for all $v|\infty$, for an automorphic form $\phi$ of level $U_1(N)$, we let $(\phi, \phi)_{U_0(N)}$ denote the Petersson norm defined using the invariant measure $dx dy/y^2$ on the upper half plane.

\medskip

\subsection{The Explicit Gross-Zagier Formula}

Let $F$ be a totally real number field of degree $d$,  $\BA=\BA_F$ the \adele ring of $F$, and $\BA_f$ its finite part. Let $\BB$ be an incoherent quaternion algebra over $\BA$, totally definite at infinity. For each open compact subgroup $U$ of $\BB_f^\times=(\BB\otimes_\BA \BA_f)^\times$, let $X_U$ be the Shimura curve over $F$ associated to $U$ and $\xi_U\in \Pic(X_U)_\BQ$ the normalized Hodge class on $X_U$, i.e. the unique line bundle, which has degree one on each geometrically connected components, and is parallel to
$$\omega_{X_U/F}+\sum_{x\in X_U(\ov{F})} (1-e_x^{-1})x.$$
 Here $\omega_{X_U/F}$ is the canonical bundle of $X_U$, $e_x$ is the ramification index of $x$ in the complex uniformization of $X_U$, i.e. for a cusp $x$, $e_x=\infty$ so that $1-e_x^{-1}=1$; for a non-cusp $x$, $e_x$ is the ramification index of any preimage of $x$ in the map $X_{U'}\ra X_U$ for any sufficiently small open compact subgroup $U'$ of $U$ such that each geometrically connected component of $X_{U'}$ is a free quotient of $\CH$ under the complex uniformization. For any two open compact subgroups $U_1\subset U_2$ of $\BB_f^\times$, there is a natural surjective morphism $X_{U_1}\ra X_{U_2}$.  Let $X$ be the projective limit of the system $(X_U)_U$, which is endowed with the Hecke action of $\BB^\times$ where $\BB_\infty^\times$ acts trivially.   Note that each $X_U$ is the quotient of $X$ by the action of $U$.

Let $A$ be a simple abelian variety over $F$ parametrized by $X$ in the sense that there is a non-constant morphism $X_U\ra A$ over $F$ for some $U$. Then by Eichler-Shimura theory, $A$ is of strict $\GL(2)$-type in the sense that $M:=\End^0(A)=\End(A)\otimes_\BZ \BQ$ is a field and $\Lie(A)$ is a free  module of rank one over $M\otimes_\BQ F$ by the induced action. Let
$$\pi_A=\Hom^0_\xi (X, A):=\varinjlim_U \Hom^0_{\xi_U} (X_U, A),$$
where $\Hom^0_{\xi_U} (X_U, A)$ denotes the morphisms in $\Hom(X_U,
A)\otimes_\BZ \BQ$ using $\xi_U$ as a base point: if $\xi_U$ is
represented by a divisor $\sum_i a_i x_i$ on $X_{U, \ov{F}}$, then
$f\in \Hom_F(X_U, A)\otimes_\BZ \BQ$ is in $\pi_A$ if and only if
$\sum_i a_i f(x_i)=0$ in $A(\ov{F})_{\BQ}:=A(\ov{F})\otimes_\BZ
\BQ$.  For each open compact subgroup $U$ of $\BB_f^\times$, let $J_U$ denote the Jacobian of $X_U$. Then $\pi_A=\Hom^0(J, A):=\varinjlim_U \Hom^0(J_U, A)$ where $\Hom^0(J_U, A)=\Hom_F(J_U, A)\otimes_\BZ \BQ$.
 The action of $\BB^\times$ on $X$ induces a natural $\BB^\times$-module structure on $\pi_A$ so that $\End_{\BB^\times} (\pi_A)=M$ and has a  decomposition $\pi_A=\otimes_M \pi_{A, v}$ where $\pi_{A, v}$ are absolutely irreducible representations of $\BB_v^\times$ over $M$. Using Jacquet-Langlands correspondence, one can define the complete L-series of $\pi_A$
$$L(s, \pi_A)=\prod_v L(s, \pi_{A, v})\in M\otimes_\BQ \BC$$
as an entire function of $s\in \BC$. Let $L(s, A, M)$ denote the L-series of $\ell$-adic Galois representation with coefficients in $M\otimes_\BQ \BQ_\ell$ associated to $A$ (without local Euler factors at infinity), then $L_v(s, A, M)=L(s-\frac{1}{2}, \pi_v)$ for all finite places $v$ of $F$. Let $A^\vee$ denote the dual abelian variety of $A$. There is perfect $\BB^\times$-invariant pairing
$$\pi_A\times \pi_{A^\vee}\lra M$$
given by $$(f_1, f_2)=\Vol(X_U)^{-1} (f_{1, U}\circ f_{2, U}^\vee), \qquad f_{1, U}\in \Hom(J_U, A), \quad f_{2, U}\in \Hom(J_U, A^\vee),$$
where $f_{2, U}^\vee: A\ra J_U$ is the dual of $f_{2, U}$ composed with the canonical isomorphism $J_U^\vee \simeq J_U$. Here $\Vol(X_U)$ is defined by a fixed invariant measure on the upper half plane.  It follows that $\pi_{A^\vee}$ is dual to $\pi_A$ as representations of $\BB^\times$ over $M$. For any fixed open compact subgroup $U$ of $\BB_f^\times$, define the $U$-pairing on $\pi_A\times \pi_{A^\vee}$ by
$$(f_1, f_2)_U=\Vol(X_U) (f_1, f_2), \qquad f_1\in \pi_A, f_2\in \pi_{A^\vee}$$
which is independent of the choice of measure defining $\Vol(X_U)$. When $A$ is an elliptic curve and identify $A^\vee$ with $A$ canonically, then  for any morphism $f: X_U\ra A$, we have $(f, f)_U=\deg f$,  the degree of the finite morphism $f$.

Let $K$ be a totally imaginary quadratic extension over $F$ with
associated quadratic character $\eta$ on $\BA^\times$. Let $L$ be a finite extension of $M$ and  $\chi: K^\times\bs K_\BA^\times \ra L^\times$
an $L$-valued Hecke character  of finite order. Let $L(s, A, \chi)\in L\otimes_\BQ \BC$ be the complete L-series obtained by $\ell$-adic Galois representation associated to $A$ tensored with the induced representation of  $\chi$ from $\Gal(\ov{K}/K)$ to $\Gal(\bar{\BQ}/\BQ)$. Assume  that
$$\omega_A \cdot \chi|_{\BA^\times}=1,$$where $\omega_A$ is the central character of $\pi_A$ on $\BA_f^\times$,  and that for each finite place $v$ of $F$,
$$\epsilon(\pi_{A, v}, \chi_v)=\chi_v\eta_v(-1)\epsilon(\BB_v),$$ where $\epsilon(\BB_v)=1$ if $\BB_v$ is split and $=-1$ otherwise, and $\epsilon(\pi_
{A, v}, \chi_v)=\epsilon(1/2, \pi_{A, v}, \chi_v)$ is the local root number of $L(s, \pi_A, \chi)$.
It follows that the global root number of the L-series $L(s, \pi_A,
\chi)$ is $-1$ and there is an embedding of $K_\BA$ into $\BB$ over $\BA$. We fixed such an embedding once for all and then view $K_\BA^\times$ as a subgroup of $\BB^\times$.

Let $N$ be the conductor of $\pi^\JL$, $D$ the relative discriminant of $K$ over $F$,  $c\subset \CO$ be the ideal maximal such that $\chi$ is trivial on $\prod_{v\nmid c}\CO_{K_v}^\times \prod_{v|c} (1+c\CO_{K, v})$. Define the following sets of places $v$ of $F$ dividing $N$:
$$\Sigma_1 := \left\{ v | N \text{ nonsplit in $K$}: \ord_v(c) < \ord_v(N) \right\}.$$
Let $c_1=\prod_{\fp|c, \fp\notin \Sigma_1} \fp^{\ord_\fp c}$ be the $\Sigma_1$-off part of $c$,  $N_1$ the $\Sigma_1$-off part of $N$,  and $N_2=N/N_1$.

Let $v$ be a place of $F$ and $\varpi_v$ a uniformizer of $F_v$.  Then there exists an $\CO_v$-order $R_v$ of $\BB_v$ with discriminant $N\CO_v$ such that $R_v \cap K_v=\CO_{c_1, v}$.    Such an order $R_v$ is called admissible for $(\pi_v. \chi_v)$ if it also satisfies the following  conditions (1) and (2). Note that up to $K_v^\times$-conjugate there is a unique such order when $v\nmid (c_1, N)$, and that $\BB$ must be split at places $v|(c_1, N)$ by Lemma \ref{Tunnell-Satio}.
  \begin{enumerate}
  \item If $v|(c_1, N)$,  then $R_v$ is the intersection of two maximal orders $R_v', R_v''$ of $\BB_v$ such that
       $R_v'\cap {K_v}=\CO_{c, v}$ and  $$ R_v''\cap K_v=\begin{cases}\CO_{c/N, v}, \  &\text{ if $\ord_v(c/N)\geq 0$},\\
       \CO_{K, v}, &\text{otherwise.}\end{cases}$$
       \end{enumerate}
Note that for $v|(c_1, N)$, there is a unique order, up to $K_v^\times$-conjugate, satisfying the condition (1) unless $\ord_v(c_1)<\ord_v(N)$. In the case  $0<\ord_v(c_1)<\ord_v(N)$,  $v$ must split in $K$ by the definition of $\Sigma_1$ and there are exactly two $K_v^\times$-conjugacy classes of orders satisfying the condition (1), which are conjuagte to each other by a normalizer of $K_v^\times$ in $\BB_v^\times$. Fix an $F_v$-algebra isomorphism $K_v\cong F_v^2$ and identify $\BB_v$ with $\End_{F_v}(K_v)$. Then the two classes contain respectively orders $R_{i, v}=R'_{i, v}\cap R''_{i, v}, i=1, 2$ as in (1) such that $R'_{i, v}=\End_\CO (\CO_c), i=1, 2,$ and  $R''_{1, v}=\End_{\CO_v}((\varpi_v^{n-c}, 1)\CO_{K_v})$ and $R''_{2, v}=\End_{\CO_v}((1, \varpi_v^{n-c})\CO_{K_v})$.

 \begin{enumerate}
  \item[(2)] If $0<\ord_v(c_1)<\ord_v(N)$, then  $R_v$ is  $K_v^\times$-conjugate to some $R_{i, v}$  such that $\chi_i$ has conductor $\ord_v(c)$, where $\chi_i, i=1, 2$ is defined by $\chi_1(a)=\chi_v(a, 1)$ and $\chi_2(b)=\chi_v(1, b)$.
  \end{enumerate}
\begin{defn}An $\wh{\CO}$-order $\CR$ of $\BB_f$ is called admissible for $(\pi, \chi)$ if for every finite place $v$ of $F$, $\CR_v:=\CR\otimes_{\wh{\CO}}\CO_v$ is admissible for $(\pi_v, \chi_v)$. Note that an admissible order $\CR$ for $(\pi, \chi)$ is of discriminant $N\wh{\CO}$ such that $\CR\cap \wh{K}=\wh{\CO}_{c_1}$.
\end{defn}
Let $\CR$ be an $\wh{\CO}$-order of $\BB_f$ with discriminant $N$ such that $\CR\cap K_{\BA_f}=\wh{\CO}_{c_1}$ and that $\CR_v:=\CR\otimes_{\wh{\CO}}\CO_v$ is admissible with $\chi_v$ for any places $v$ in the above sense. Note that $\CR_v$ is unique up to $K_v^\times$-conjugate for any $v\nmid (c_1, N)$.

Let $U=\CR^\times$ and $U^{(N_2)}:=\CR^\times \cap \BB_f^{\times (N_2)}$.
Note that for any finite place $v|N_1$, $\BB_v$ must be split (by Lemma \ref{Tunnell-Satio} (5)). Let $Z\cong \BA_f^\times$ denote the center of $\BB_f^\times$. The group $U^{(N_2)}$ has a decomposition $U^{(N_2)}=U' \cdot(Z\cap U^{(N_2)})$ where $U'=\prod_{v\nmid N_2\infty} U'_v$ such that for any finite place $v\nmid N_2$,  $U'_v=U_v$ if $v\nmid N$ and $U'_v\cong {U_1(N)}_v$ otherwise.  View $\omega$ as a character on $Z$ and we may define a character on $U^{(N_2)}$  by $\omega$ on $Z\cap U^{(N_2)}$ and trivial on $U'$, which we also denoted by $\omega$.

\begin{defn} Let $V(\pi, \chi)$ denote the space of forms $f\in \pi_A\otimes_M L$, which are $\omega$-eigen under $U^{(N_2)}$, and $\chi_v^{-1}$-eigen under $K_v^\times$ for all places $v\in \Sigma_1$.  The space $V(\pi, \chi)$  is actually  a one dimensional $L$-space  (see Proposition \ref{local-multi-one}).
\end{defn}

Consider the Hecke action of $K_\BA^\times\subset \BB^\times$ on $X$. Let $X^{K^\times}$ be the $F$-subscheme of $X$ of fixed points of $X$ under $K^\times$. The theory of complex multiplication asserts that  every point in $X^{K^\times}(\bar{F})$ is defined over $K^\ab$ and that the Galois action is given by the Hecke action under the reciprocity law. Fix a point $P\in X^{K^\times}$ and let $f\in V(\pi, \chi)$ be a non-zero vector.  Define a Heegner cycle associated to $(\pi, \chi)$ to be
$$P_\chi^0(f):=\sum_{t\in \Pic_{K/F}(\CO_{c_1})} f(P)^{\sigma_t} \chi(t)\in A(K^\ab)_\BQ \otimes_M L,$$where $\Pic_{K/F}(\CO_{c_1})=\wh{K}^\times/K^\times \wh{F}^\times \wh{\CO}_{c_1}^\times$ and $t\mapsto \sigma_t$ is the reciprocity law map in the class field theory.
The \Neron-Tate height pairing over $K$ gives a $\BQ$-linear map $\pair{\ ,\ }_K: A(\bar{K})_\BQ \otimes_M A^\vee(\bar{K})_\BQ\ra \BR$. Let $\pair{\ ,\ }_{K, M}:  A(\bar{K})_\BQ \otimes_M A^\vee(\bar{K})_\BQ\ra M\otimes_\BQ\BR$ be the unique $M$-bilinear pairing such that $\pair{\ ,\ }_K=\tr_{M\otimes \BR/\BR} \pair{\ ,\ }_{K, M}$. The pairing $\pair{\ ,\ }_{K, M}$ induces an $L$-linear \Neron-Tate pairing over $K$:
$$\pair{\ ,\ }_{K, L}: (A(\bar{K})_\BQ\otimes_M L)\otimes_L (A^\vee(\bar{K})_\BQ\otimes_M L)\lra L\otimes_\BQ \BR.$$
Note that the $\BB^\times$-invariant $M$-linear pairing $(\ , \ )_U: \pi_A\times \pi_{A^\vee}\ra M$ induces a $\BB^\times$-invariant $L$-linear pairing
$$(\ ,\ )_U: (\pi_A\otimes_M L)\times (\pi_{A^\vee}\otimes_M L)\lra L.$$

The Hilbert newform $\phi$ in the Jacquet-Langlands correspondence $\sigma$ of $\pi_A$ on $\GL_2(\BA)$ is the form of level $U_1(N)$, for each $v|\infty$, $\SO_2(\BR)\subset \GL_2(F_v)$ acts by the character $\sigma(k_\theta)\phi=e^{4\pi i \theta}\phi$ where  $k_\theta=\matrixx{\cos\theta}{\sin\theta}{-\sin\theta}{\cos\theta}\in \SO_2(\BR)$, such that
$$L(s, \pi)=2^d\cdot |\delta|_\BA^{s-\frac{1}{2}}\cdot Z(s, \phi), \quad Z(s, \phi)=\int_{F^\times\bs \BA^\times} \phi\matrixx{a}{}{}{1} |a|_\BA^{s-\frac{1}{2}}d^\times a$$
where the measure $d^\times a$ is taken to be Tamagawa measure so that $\Res_{s=1} \int_{|a|\leq 1, a\in F^\times\bs \BA^\times} |a|^{s-1}d^\times a=\Res_{s=1}L(s, 1_F)$, and $\delta$ is the differential of $F$. Note that $\phi(g)\ov{\phi}(g)$ is a function on $${GL_2(F)}_+\bs {\GL_2(F_\infty)}_+\times \GL_2(\BA_f)/Z(\BA)\cdot (U_{1, \infty}\times U_0(N)) \cong {\GL_2(F)}_+\bs \CH^d \times \GL_2(\BA_f)/U_0(N) \BA_f^\times.$$ We define the Petersson norm $(\phi, \phi)_{U_0(N)}$ by the integration of $\phi \ov{\phi}$ with measure $dx dy/y^2$ on each upper half plane. One main result of this paper is the following.

\begin{thm}[Explicit Gross-Zagier Formula] \label{GZ} Let $F$ be a totally real field of degree $d$. Let $A$ be an abelian variety over $F$ parametrized by a Shimura curve $X$ over $F$ and $\phi$ the Hilbert holomorphic newform of parallel weight $2$ on $\GL_2(\BA)$ associated to $A$. Let $K$ be a totally imaginary quadratic extension over $F$ with relative discriminant $D$ and discriminant $D_K$. Let $\chi: K_\BA^\times/K^\times \ra L^\times$  be a finite Hecke character of conductor $c$ over some finite extension $L$ of $M:=\End^0(A)$. Assume that
 \begin{enumerate}
 \item $\omega_A \cdot \chi|_{\BA^\times}=1$, where $\omega_A$ is the central character of $\pi_A$;
 \item for any place $v$ of $F$, $\epsilon(\pi_{A, v}, \chi_v)=\chi_v \eta_v(-1)\epsilon(\BB_v)$.
 \end{enumerate}
For any non-zero forms $f_1\in V(\pi_A, \chi)$ and $f_2\in V(\pi_{A^\vee}, \chi^{-1})$, we have an equality in $L\otimes_\BQ \BC$:
$$L^{'(\Sigma)} (1, A, \chi)=2^{-\#\Sigma_D} \cdot \frac{(8\pi^2)^d\cdot(\phi, \phi)_{U_0(N)}}{u_1^2 \sqrt{|D_K| \|c_1^2\|}}\cdot \frac{\pair{P_\chi^0(f_1), P^0_{\chi^{-1}}(f_2)}_{K, L}}{(f_1, f_2)_{\CR^\times}}$$
where
 $$\begin{aligned}
 &\Sigma:=\left\{v|(N, Dc): \text{if $v\|N$ then $\ord_v(c/N)\geq 0$}\right\},\\
&\Sigma_D := \left\{ v| (N,D): \ord_v(c) < \ord_v(N)  \right\},
\end{aligned}$$
the ideal $c_1|c$ is the $\Sigma_1$-off part of $c$ as before,   $u_1=\#\kappa_{c_1}\cdot [\CO_{c_1}^\times: \CO^\times]$ and $\kappa_{c_1}$ is the kernel of the morphism from $\Pic(\CO)$ to $\Pic(\CO_{c_1})$ which has order $1$ or $2$, and $(\phi, \phi)_{U_0(N)}$ is the Petersson norm  with respect to the measure $dxdy/y^2$ on the upper half plane.
\end{thm}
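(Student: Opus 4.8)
The plan is to deduce the formula from the general Gross--Zagier formula of Yuan--Zhang--Zhang \cite{YZZ} by specializing to explicit local test vectors and computing every local and archimedean factor. In the automorphic normalization (using $L_v(s,A,M)=L(s-\tfrac12,\pi_v)$), \cite{YZZ} gives, for any decomposable pair $f_1=\otimes_v f_{1,v}\in\pi_A$ and $f_2=\otimes_v f_{2,v}\in\pi_{A^\vee}$ with matching local root numbers, an identity of the shape
\[
L'(1/2,\pi_A,\chi) = \bigl(\text{archimedean and global normalization}\bigr)\cdot\prod_{v\nmid\infty}\alpha_v^\natural(f_{1,v},f_{2,v})\cdot\pair{P_\chi(f_1),\,P_\chi(f_2)}_K,
\]
where $\alpha_v^\natural$ is the local toric period of $(f_{1,v},f_{2,v})$ normalized by the local $L$-value and $P_\chi(f)$ is the adelic Heegner cycle attached to $f$ and $\chi$. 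Our task is thus: (a) choose the local components $f_{i,v}$ spanning $V(\pi_v,\chi_v)$ and built from the admissible orders $\CR_v$; (b) evaluate $\alpha_v^\natural$ at each finite $v$; (c) compute the archimedean factor; and (d) translate the adelic objects $P_\chi(f)$ and the \cite{YZZ} global period into the classical $P_\chi^0(f)$ and $(\phi,\phi)_{U_0(N)}$ of the statement.

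For (b) the computation splits by the behaviour of $v$. When $v\nmid NDc\infty$ one takes the spherical vector and $\alpha_v^\natural=1$ by the standard unramified calculation. When $v\mid N$ one uses the admissible order $\CR_v$ given by conditions (1) and (2) and the vector of $V(\pi_v,\chi_v)$, invoking Lemma \ref{Tunnell-Satio} to know $\BB_v$ splits where required and to control the local $\epsilon$-factor; an explicit local zeta integral, respectively a lattice count when $\pi_v$ is a principal series, then gives $\alpha_v^\natural$. The upshot is that $\alpha_v^\natural=1$ except when $v\mid(N,D)$ with $\ord_v(c)<\ord_v(N)$, i.e.\ $v\in\Sigma_D$, where $\alpha_v^\natural=1/2$; this yields the factor $2^{-\#\Sigma_D}$. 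The set $\Sigma$ records exactly the places where the chosen normalization of $f_{i,v}$ and the local root number force passage to an incomplete $L$-series, which is why the left side is $L^{'(\Sigma)}$ rather than $L'$. For (c) one takes at each $v\mid\infty$ the weight-$2$ vector in the holomorphic discrete series; the archimedean local integral contributes $8\pi^2$ at each of the $d$ infinite places, hence $(8\pi^2)^d$.

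For (d), the hypothesis $\omega_A\cdot\chi|_{\BA^\times}=1$ lets the toric integral over $K_\BA^\times/K^\times$ descend to $K_\BA^\times/K^\times\BA^\times$, so $P_\chi(f)$ may be replaced by the finite sum $P_\chi^0(f)$ over $\Pic_{K/F}(\CO_{c_1})=\wh{K}^\times/K^\times\wh{F}^\times\wh{\CO}_{c_1}^\times$. Passing from the invariant measure to counting measure on this finite set produces a volume factor which, using $\|\delta\|=|D_F|$ together with the conductor bookkeeping for the theta series $\phi_\chi$ of $\chi$, becomes $\sqrt{|D_K|\,\|c_1^2\|}$ in the denominator, while the kernel $\kappa_{c_1}$ of $\Pic(\CO)\to\Pic(\CO_{c_1})$ and the unit index $[\CO_{c_1}^\times:\CO^\times]$ together contribute $u_1=\#\kappa_{c_1}\cdot[\CO_{c_1}^\times:\CO^\times]$; as the height pairing is bilinear in $(f_1,f_2)$ this enters squared, giving $u_1^2$. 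Next one compares the \cite{YZZ} global period (an adelic Petersson product against Tamagawa measure, together with the $\Vol(X_U)$ implicit in $(\ ,\ )_{\CR^\times}$) with $(\phi,\phi)_{U_0(N)}$ defined against $dx\,dy/y^2$; the ratio is a product of $\Vol(U_0(N))$, powers of $2$ and $\pi$, and discriminant factors, all arranged to sit inside the constants already displayed. Finally one divides by $(f_1,f_2)_{\CR^\times}$: since $\pair{P_\chi^0(f_1),P_{\chi^{-1}}^0(f_2)}_{K,L}$ is $L$-bilinear in $(f_1,f_2)$ and $V(\pi_A,\chi)$, $V(\pi_{A^\vee},\chi^{-1})$ are one-dimensional by Proposition \ref{local-multi-one}, this ratio is independent of the choices and the identity follows.

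\textbf{Main obstacle.} The delicate part is (b) at the places $v\mid(N,Dc)$, where the conductor of $\chi_v$ interacts with the level. There one must (i) pin down the admissible order $\CR_v$ unambiguously, and in the case $0<\ord_v(c_1)<\ord_v(N)$ select the correct one of the two $K_v^\times$-conjugacy classes $R_{1,v},R_{2,v}$ via condition (2); (ii) write down an explicit generator of $V(\pi_v,\chi_v)$, reconciling the $\omega$-eigen condition under $U^{(N_2)}$ with the $\chi_v^{-1}$-eigen condition under $K_v^\times$ for $v\in\Sigma_1$; (iii) evaluate the local toric period for this vector, a genuine local integral computation requiring new-vector theory for $\GL_2$, or Waldspurger's local functionals in the ramified cases; and (iv) divide by $L(1/2,\pi_v,\chi_v)$ to extract $\alpha_v^\natural$ and read off the exact power of $2$. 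Arranging that all these ad hoc local choices assemble into the single clean formula with $2^{-\#\Sigma_D}$, $u_1^2$, and $\sqrt{|D_K|\,\|c_1^2\|}$, and checking consistency with the special case $c=1$, $F=\BQ$ (recovering Theorem \ref{A} and the original Gross--Zagier theorem of \cite{GZ}), is the main technical burden.
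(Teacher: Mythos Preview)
Your overall architecture matches the paper's: start from the Yuan--Zhang--Zhang formula, choose the admissible test vectors in $V(\pi,\chi)$, compute the local toric integrals, and convert the adelic period and pairing to the level-$c_1$ objects in the statement. But two points deserve correction.

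First, the YZZ identity (equation \eqref{2} in the paper) does not contain a Petersson norm; it has $L(1,\pi_A,\ad)L(2,1_F)^{-1}$ on the right-hand side. Passing from $L(1,\pi_A,\ad)$ to $(\phi,\phi)_{U_0(N)}$ is not a mere measure comparison but a genuine Rankin--Selberg unfolding (the paper's Proposition \ref{Petersson}), which introduces its own family of local Whittaker factors $\alpha_v(W_{1,v},W_{2,v})$. The paper then organizes the reduction as a product $\alpha_v\beta_v\gamma_v$ over all $v$ (Proposition \ref{A1}): $\alpha_v$ from the Petersson/Whittaker side, $\beta_v$ the normalized toric integral, and $\gamma_v=\Vol(U_v)/\Vol(U_0(N)_v)$. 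Your single $\alpha_v^\natural$ conflates these, and in particular hides the step you are missing.

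Second, your claim that the local factor equals $1$ away from $\Sigma_D$ and $1/2$ on $\Sigma_D$ is too coarse. Proposition \ref{p-adic-beta} gives five separate formulas for $\beta(f)$, and the combined local result (Lemma \ref{p-adic}) reads
\[
\alpha_v\beta_v\gamma_v\,|D|_v^{-1/2}=2^{\delta(\Sigma_D)}\,L(1/2,\pi_v,\chi_v)^{-\delta(\Sigma)}\,L(1,\eta_v)^{2\delta(c_1)}\,q_v^{-c_1},
\]
so there are $L(1,\eta_v)^2q_v^{-c_1}$ contributions at every place with $c_1>0$, not just at $\Sigma_D$. These are precisely what cancel against the $L_{c_1}(1,\eta)^2|c_1|_\BA$ coming from the relative class-number formula (Lemma \ref{period}) in the period conversion; the $2^{-\#\Sigma_D}$ then emerges after this cancellation (and note the local factor is $2^{+1}$, landing on the other side). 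The archimedean $(8\pi^2)^d$ likewise arises in two steps: the local toric/Whittaker computation gives $(4\pi^3)^d$ with the normalized new vector $\phi^0$, and a further $(2\pi)^{-d}$ comes from $\langle\phi^0,\overline{\phi^0}\rangle_{U_0(N)}=(2\pi)^{-d}(\phi,\phi)_{U_0(N)}$. Your ``main obstacle'' paragraph correctly identifies the hard local work, but the actual case analysis is more intricate than a dichotomy into $1$ and $1/2$.
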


\s{\bf Remark}: Note that the assumption  $\omega_A|_{\BA^\times} \cdot \chi=1$ implies  $L(s, A, \chi)=L(s, A^\vee, \chi^{-1})$. Let $\phi^\vee$ be the Hilbert newform associated to $A^\vee$, then $(\phi^\vee, \phi^\vee)_{U_0(N)}=(\phi, \phi)_{U_0(N)}$.

We may state the above theorem in simpler way under some assumptions.  Assume that
\begin{itemize}
\item $\omega_A$ is unramified, and if $v\in \Sigma_1$ then $v\nmid c$;
\end{itemize}
Note that $c_1=c$ under the above assumption. Fix an infinite place $\tau$ of $F$ and let $B$ be nearby quaternion algebra whose ramification set is obtained from the one of $\BB$ by removing $\tau$. Then there is an $F$-embedding of $K$ into $B$ which we fix once for all and view $K^\times$ as a $F$-subtorus of $B^\times$. Let $R$ be an admissible  $\CO$-order of $B$ for $(\pi, \chi)$, by which we mean that $\wh{R}$ is an admissible $\wh{\CO}$-order of $\BB_f=\wh{B}$ for $(\pi, \chi)$. Note that $R$ is of discriminant $N$ such that $R\cap K=\CO_c$.   Let $U=\wh{R}^\times \subset \wh{B}^\times$ and let $X_U$ be the Shimura curve of level $U$ so that it has complex uniformization
$$X_{U, \tau} (\BC)=B^\times_+\bs \CH \times \wh{B}^\times/ U \cup \{\mathrm{Cusps}\},$$where $B_+^\times$ is the subgroup of elements $x\in B^\times$ with totally positive norms. Let $u=\#\kappa_c\cdot [\CO_c^\times: \CO^\times]$. By Proposition \ref{local-multi-one-2}, we have that $V(\pi_A, \chi)\subset (\pi_A\otimes_M L)^{\wh{R}^\times}$.

\s{\em Special case 1}. {\em Further assume that $(N, Dc)=1$}.  Then there is a non-constant morphism $f: X_U \lra A$ mapping a Hodge class on $X_U$ to torsion of $A$ and for any two such morphisms $f_1, f_2: X_U\ra A$, $n_1f_1=n_2f_2$ for some non-zero integers $n_1, n_2$. Let $P=[h_0, 1]\in X_U$ be the point with $h_0$ the unique fixed point of $K^\times$.  Replace $\chi$ by $\chi^{-1}$, there is a non-constant morphism $X_U\ra A^\vee$ with similar uniqueness.  For any such $f_1: X_U\ra A$ and $f_2: X_U\ra A^\vee$, let $(f_1, f_2)=f_1\circ f_2^\vee$. Then we have an equality in $L\otimes_\BQ \BC$:
$$L' (1, A, \chi)=\frac{(8\pi^2)^d(\phi, \phi)_{U_0(N)}}{u^2 \cdot\sqrt{|D_K| \|c^2\|}}\cdot \frac{\pair{P_\chi^0(f_1), P^0_{\chi^{-1}}(f_2)}_{K, L}}{(f_1, f_2)_U}.$$

\s{\em Special case 2}. {\em Further assume that $\omega_A$ is trivial, or more general, that $\omega_A(\varpi_v)\in \Aut(A)^2\subset M^{\times 2}$ for all places $v|(N, D)$ but $v\nmid c$, where $\varpi_v$ is a uniformizer of $F_v$.}   For each place $v|(N, D)$ but $v\nmid c$, $K_v^\times$ normalizes $R_v^\times$ (See Lemma
\ref{order-uniqueness}) and a uniformizer $\varpi_{K_v}$ of $K_v$ induces an automorphism $T_{\varpi_{K_v}}: X_U\ra X_U$ over $F$. Note that $\chi_v(\varpi_{K_v})\in \Aut(A)\subset M^\times$. There exists a non-constant morphism $f: X_U\ra A$ mapping a Hodge class to torsion point such that $T_{\varpi_{K_v}}f =\chi^{-1}(\varpi_{K_v}) f$
for each place $v|(N, D)$ but $v\nmid c$. Such $f$ has the same uniqueness property as in special case 1. Then for any such $f_1: X_U\ra A$ and $f_2: X_U\ra A^\vee$, we have an equality in $L\otimes_\BQ \BC$:
$$L^{'(\Sigma)} (1, A, \chi)=2^{-\#\Sigma_D}\cdot\frac{(8\pi^2)^d(\phi, \phi)_{U_0(N)}}{u^2 \cdot\sqrt{|D_K| \|c^2\|}}\cdot \frac{\pair{P_\chi^0(f_1), P^0_{\chi^{-1}}(f_2)}_{K, L}}{(f_1, f_2)_U},$$
where $\Sigma $ is now the set of places $v|(cD,N)$ of $F$ such that if $v\| N$ then $v\nmid D$.

\s{\bf Example}. Let $\phi\in S_2(\Gamma_0(N))$ be a newform. Let $K$ be an imaginary quadratic field of discriminant $D$ and $\chi$ a primitive character of $\Pic(\CO_c)$. Assume that $(\phi, \chi)$ satisfies the Heegner condition (1)-(2) in Theorem \ref{A}, then by Lemma \ref{Tunnell-Satio} (1) and (3), $\epsilon(\phi, \chi)=-1$ and  $B=M_2(\BQ)$. The condition (1)-(2) also implies that there exists $a, b\in \BZ$ with $(N, a, b)=1$ such that $a^2-4Nb=Dc^2$. Fix an embedding of  $K$ into $B$ by
$$(Dc^2+\sqrt{Dc^2})/2\lto \matrixx{(Dc^2+a)/2}{-1}{Nb}{(Dc^2-a)/2}.$$
Then $R:=\left\{\matrixx{a}{b}{c}{d}\in M_2(\BZ) \Big| N|c\right\}$ is an order of $B$ such that $\wh{R}\cap K=\CO_c$. Let $A$ be an abelian variety associated to $\phi$ via Eichler-Shimura theory and $f: X_0(N)\ra A$ be any non-constant morphism mapping cusp $\infty$ to $O\in A$. Then $f\in V(\pi_A, \chi)$. Let $z\in \CH$ be the fixed point by $K^\times$, then $Nbz^2-az+1=0$, $\CO_c=\BZ+\BZ z^{-1}$, and $\fn^{-1}=\BZ +\BZ N^{-1} z^{-1}$ so that $\CO_c/\fn \cong \BZ/N\BZ$. The point on $X_0(N)$ corresponding $z$ via complex uniformation represents the isogeny $\BC/(\BZ +\BZ z)\ra \BC/(N^{-1} \BZ +\BZ z)$, or $\BC/\CO_c\ra \BC/\fn^{-1}$. Thus Theorem \ref{A} now follows from Theorem \ref{GZ}.

\medskip

For various arithmetic applications, we may need explicit formula for different test vectors. We now give variations of the explicit formula for different test vectors. Let $v$ be a finite place of $F$, fix $\pair{\ ,\ }_v$ a $\BB_v^\times$-invariant pairing on $\pi_{A, v}\times \pi_{A^\vee, v}$ and a Haar measure $dt_v$ on $F_v^\times\bs K_v^\times$. For any $f'_{1, v}\in \pi_{A, v}, f'_{2, v}\in \pi_{A^\vee, v}$ with $\pair{f'_{1, v}, f'_{2, v}}_v\neq 0$, let
$$\beta^0(f'_{1, v}, f'_{2, v})=\beta^0(f'_{1, v}, f'_{2, v}, dt_v)=\int_{F_v^\times\bs K_v^\times} \frac{\pair{\pi_{A, v}(t_v)f'_{1, v}, f'_{2, v}}_v}{\pair{f'_{1, v}, f'_{2, v}}_v} \chi_v(t_v) dt_v.$$
For any two non-zero pure tensor forms $f'=\otimes_v f_v', f''=\otimes_v f''_v\in \pi$, we say that $f', f''$  differ (resp. coincide) at a place $v$ if $f_v'$ and $f_v''$ are not parallel (resp. are parallel). It is independent of the decompositions. In particular, if two non-zero pure tensor forms coincide locally everywhere then they are the same up to a scalar.
\begin{thm}[Variation of Gross-Zagier Formula]\label{variation1}
  Let $(A, \chi)$  and $f_1\in V(\pi_A,  \chi), f_2\in V(\pi_{A^\vee}, \chi^{-1})$ be as in Theorem \ref{GZ}. Let $S$ be a finite set of finite places of $F$,  $f_1'\in\pi_A, f_2'\in \pi_{A^\vee}$ be vectors such that $f_i'$ and $f_i$  coincide for any $v\notin S, i=1, 2$,  and $\pair{f'_{1, v}, f'_{2, v}}_v\neq 0$ and $\beta^0(f'_{1, v}, f'_{2, v})\neq 0,$ for any $v\in S$.  Define
 $$P^0_\chi (f_1')=\frac{\#\Pic(\CO_{c_1})}{\Vol(K^\times\wh{F}^\times\bs \wh{K}^\times, dt)}\cdot \int_{K^\times\wh{F}^\times\bs \wh{K}^\times} f'_1(P)^{\sigma_t} \chi(t) dt,$$
 and define $P^0_{\chi^{-1}}(f_2')$ similarly.  Then,  with notations as in Theorem \ref{GZ},  we have that
$$ L^{'(\Sigma)} (1, A, \chi)=2^{-\#\Sigma_D} \cdot \frac{(8\pi^2)^d\cdot (\phi, \phi)_{U_0(N)}}{u_1^2 \sqrt{|D_K| \|c_1^2\|}}\cdot \frac{\pair{P_\chi^0(f'_1), P^0_{\chi^{-1}}(f'_2)}_{K, L}}{(f'_1, f'_2)_\CR^\times}\cdot \prod_{v\in S}\frac{\beta^0(f_{1, v}, f_{2, v})}{ \beta^0(f'_{1, v}, f'_{2, v})},$$which is independent of the choice of Haar measure $dt_v$ for $v\in S$.
\end{thm}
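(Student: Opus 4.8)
The plan is to deduce Theorem~\ref{variation1} from Theorem~\ref{GZ} together with a factorization of the normalized N\'eron--Tate height of Heegner cycles into local period integrals; this factorization is the natural common generalization of Theorem~\ref{GZ} and is what the height computation in the body of the paper really produces. Precisely, the first step is to establish that for \emph{any} decomposable test vectors $\varphi_1=\otimes_v\varphi_{1,v}\in\pi_A$ and $\varphi_2=\otimes_v\varphi_{2,v}\in\pi_{A^\vee}$ with $\pair{\varphi_{1,v},\varphi_{2,v}}_v\neq 0$ for all $v$, and with $P^0_\chi(\varphi_1)$, $P^0_{\chi^{-1}}(\varphi_2)$ defined by the integral formula in the statement, the quantity
\begin{equation}\label{eq:GZfact}
\lambda(\varphi_1,\varphi_2):=\frac{\pair{P^0_\chi(\varphi_1),\,P^0_{\chi^{-1}}(\varphi_2)}_{K,L}}{(\varphi_1,\varphi_2)_{\CR^\times}\cdot\prod_v\beta^0(\varphi_{1,v},\varphi_{2,v})}
\end{equation}
is independent of $(\varphi_1,\varphi_2)$ (and, because of the built-in normalizations, of all choices of local and global Haar measures); write $\lambda$ for its common value. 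The product in the denominator is finite: $\beta^0(\varphi_{1,v},\varphi_{2,v})$ depends only on the lines $L\varphi_{1,v}$, $L\varphi_{2,v}$, and it equals $\beta^0(f_{1,v},f_{2,v})$ whenever $\varphi_{i,v}$ is parallel to $f_{i,v}$, hence at all but finitely many $v$. Identity~\eqref{eq:GZfact} is exactly the place-by-place matching produced by the height computation of \cite{YZZ}: the arithmetic intersection of the Heegner cycle against its Galois translates is an Euler product whose $v$-factor is the local toric integral $\beta^0$ at $v$, the residual global factor being $L^{'(\Sigma)}(1,A,\chi)$ up to an elementary constant.

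Granting~\eqref{eq:GZfact}, the theorem follows by evaluating $\lambda$ on the pair of Theorem~\ref{GZ} and then taking a ratio. For $\varphi_i=f_i$ one first checks that the integral definition of $P^0_\chi(f_1)$ recovers the finite sum $\sum_{t\in\Pic_{K/F}(\CO_{c_1})}f_1(P)^{\sigma_t}\chi(t)$ of Theorem~\ref{GZ}: the eigenproperties defining $V(\pi_A,\chi)$ make $t\mapsto f_1(P)^{\sigma_t}\chi(t)$ invariant under $K^\times\wh{F}^\times\wh{\CO}_{c_1}^\times$, so after the normalization $\#\Pic(\CO_{c_1})/\Vol(K^\times\wh{F}^\times\bs\wh{K}^\times,dt)$ the integral collapses to that sum (and likewise for $f_2$). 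Comparing~\eqref{eq:GZfact} at $(\varphi_1,\varphi_2)=(f_1,f_2)$ with Theorem~\ref{GZ} then gives
\[
\lambda\cdot\prod_v\beta^0(f_{1,v},f_{2,v})=\frac{2^{\#\Sigma_D}\,u_1^2\,\sqrt{|D_K|\,\|c_1^2\|}}{(8\pi^2)^d(\phi,\phi)_{U_0(N)}}\cdot L^{'(\Sigma)}(1,A,\chi).
\]
Now apply~\eqref{eq:GZfact} at $(\varphi_1,\varphi_2)=(f_1',f_2')$. Since $f_i'$ is parallel to $f_i$ at every $v\notin S$, the local factors there are unchanged, whence
\[
\frac{\pair{P^0_\chi(f_1'),\,P^0_{\chi^{-1}}(f_2')}_{K,L}}{(f_1',f_2')_{\CR^\times}}=\lambda\cdot\prod_v\beta^0(f'_{1,v},f'_{2,v})=\Big(\lambda\cdot\prod_v\beta^0(f_{1,v},f_{2,v})\Big)\cdot\prod_{v\in S}\frac{\beta^0(f'_{1,v},f'_{2,v})}{\beta^0(f_{1,v},f_{2,v})}.
\]
Substituting the previous display for $\lambda\cdot\prod_v\beta^0(f_{1,v},f_{2,v})$ and solving for $L^{'(\Sigma)}(1,A,\chi)$ reproduces the asserted formula; since only the factors with $v\in S$ survive and each $dt_v$ enters numerator and denominator of $\beta^0(f_{1,v},f_{2,v})/\beta^0(f'_{1,v},f'_{2,v})$ symmetrically, the right-hand side is visibly independent of the measures $dt_v$, $v\in S$.

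The entire difficulty is concentrated in~\eqref{eq:GZfact}, and this is the step I expect to be the main obstacle. It demands the local analysis identifying the $v$-component of the height pairing of Heegner cycles with the toric period $\beta^0$ --- Waldspurger's local theory, the Tunnell--Saito dichotomy, and the fact that an admissible vector $f$ realizes a nonzero local period $\beta^0(f_{1,v},f_{2,v})$ at every finite place --- together with the global comparison of the resulting Euler product with $L^{'(\Sigma)}(1,A,\chi)$ by the method of \cite{YZZ}. Once that is available, Theorem~\ref{variation1} amounts only to the observation that the numerator and the denominator of~\eqref{eq:GZfact} transform the same multiplicative way under a change of test vector, so that their ratio is rigid and may be computed from the single convenient choice recorded in Theorem~\ref{GZ}.
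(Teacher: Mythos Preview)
Your approach is essentially the same as the paper's: both deduce the variation formula by applying the Yuan--Zhang--Zhang height formula (equation~\eqref{2} in the paper) to the two pairs $(f_1,f_2)$ and $(f_1',f_2')$, taking the ratio so that the global constant and all local factors outside $S$ cancel, and then invoking Theorem~\ref{GZ} to identify the remaining constant. Your identification of \eqref{eq:GZfact} with the YZZ formula is exactly right, and it is not a new obstacle---the paper cites it as a black box.

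One technical point: as written, your quantity $\lambda(\varphi_1,\varphi_2)$ involves the infinite product $\prod_v\beta^0(\varphi_{1,v},\varphi_{2,v})$, which does not converge. The unnormalized toric integrals $\beta^0$ are not equal to $1$ at almost all places; rather, the \emph{normalized} integrals $\beta_v=\frac{L(1,\eta_v)L(1,\pi_v,\ad)}{L(2,1_{F_v})L(1/2,\pi_v,\chi_v)}\beta^0$ are. Your justification (``it equals $\beta^0(f_{1,v},f_{2,v})$ at all but finitely many $v$'') only shows that the \emph{ratio} $\prod_v\beta^0(\varphi_{1,v},\varphi_{2,v})/\beta^0(f_{1,v},f_{2,v})$ is a finite product, which is all you actually use. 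The cleanest fix is to define $\lambda$ using $\beta_v$ instead of $\beta^0$ (this is literally the YZZ formula~\eqref{2}), or simply never isolate $\lambda$ and work directly with the ratio of the two instances of~\eqref{2}, as the paper does.
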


\s{\bf Example}. Let $A$ be the elliptic curve $X_0(36)$ with the cusp $\infty$ as the identity point and let $K=\BQ(\sqrt{-3})$. Let $p\equiv 2\mod 9$ be a prime, then the field $L'=K(\sqrt[3]{p})$ is contained in $H_{3p}$. Let $\chi: \Gal(L'/K)\ra K^\times$ be the character mapping $\sigma$ to $(\sqrt[3]{p})^{\sigma-1}$. Fix the embedding $K\ra M_2(\BQ)$ mapping $w:=(-1+\sqrt{-3})/2$ to $\matrixx{-1}{-p/6}{6/p}{0}$.

For $f'=\id: X_0(36)\ra A$, let $P\in X_0(36)$ be the point corresponding to $-pw/6\in \CH$.  The Heegner divisor $P^0_\chi(f')$ is
 $$P^0_\chi(f')=\frac{1}{9}\sum_{t\in \Pic(\CO_{6p})} f'(P)^{\sigma_t}\chi(t).$$
One can show that $P^0_\chi(f')$ is non-trivial (see \cite{Satge}, \cite{DV} and \cite{CST}) and then it follows that the prime $p$ is the sum of two rational cubes. By the variation formula, one can easily obtain the height formula of $P^0_\chi(f')$: let $\phi\in S_2(\Gamma_0(36)$ be the newform associated to $A$, and note that $\#\Sigma_D=1$, $u_1=1$, and $c_1=p$ in the variation,
$$L^{'(\infty)}(1, A, \chi)=9\cdot \frac{8\pi^2\cdot (\phi, \phi)_{\Gamma_0(36)}}{\sqrt{3 p^2}} \cdot \pair{P^0_\chi(f'), P^0_{\chi^{-1}}(f')}_{K, K}.$$

In fact,  $U=\CR^\times$ in Theorem \ref{GZ} is given by
$$\CR=\left\{ \matrixx{a}{b/6}{6c}{d}\in M_2(\wh{\BQ})\Big| a, b, c, d\in \wh{\BZ}, p^{-1}b+pc, a+pc-d\in 6\wh{\BZ}\right\}$$
and $f\in V(\pi_A, \chi)$ is $\chi_v^{-1}$-eigen  for $v=2, 3$. Then $(f', f')=\Vol(X_U)/\Vol(X_0(36))=2/9$. The ratio $\displaystyle{\frac{\beta^0(f_v, f_v)}{\beta^0(f'_v, f'_v)}}$ is equal to $1$ at $v=2$ and $4$ at $v=3$.

\medskip

\subsection{The Explicit Waldspurger Formula}
Let  $F$ be a general base number field. Let  $B$ be a quaternion algebra over $F$ and $\pi$ a cuspidal  automorphic representation of $B_\BA^\times$ with central character $\omega$.   Let $K$ be a quadratic field extension of $F$ and $\eta$ the quadratic Hecke character on $F^\times\bs \BA^\times$ associated to the quadratic extension. Let $\chi$ be a Hecke chareacter on $K_\BA^\times$. Write $L(s, \pi, \chi)$ for the Rankin L-series $L(s, \pi^\JL \times \pi_\chi)$, where $\pi^\JL$ is the Jacquet-Langlands correspondence of $\pi$ on $\GL_2(\BA)$ and $\pi_\chi$ the automorphic representation of $\GL_2(\BA)$ corresponding to theta series of $\chi$ so that $L(s, \pi_\chi)=L(s, \chi)$.  Assume that
$$\omega \cdot \chi|_{\BA^\times}=1.$$ Then for any place $v$ of $F$, the local root number $\epsilon(1/2, \pi_v, \chi_v)$ of the Rankin L-series is independent of the choice of additive character. We also assume that for all places $v$ of $F$
$$\epsilon(1/2, \pi_v, \chi_v)=\chi_v\eta_v(-1)\epsilon(B_v), $$
 where $\epsilon(B_v)=-1$ if $B_v$ is division and $+1$ otherwise. It follows that the global root number $\epsilon(1/2, \pi, \chi)=+1$ and there exists an $F$-embedding of $K$ into $B$. We fix such an embedding once for all and view $K^\times$ as an $F$-subtorus of $B^\times$.

Let $N$ be the conductor of $\pi^\JL$, $D$ the relative discriminant of $K$ over $F$,  $c\subset \CO$ be the ideal maximal such that $\chi$ is trivial on $\prod_{v\nmid c}\CO_{K_v}^\times \prod_{v|c} (1+c\CO_{K, v})$. Define the following sets of places $v$ of $F$ dividing $N$:
$$\Sigma_1 := \left\{ v |N \text{ nonsplit in $K$}: \ord_v(c) < \ord_v(N) \right\},$$
Let $c_1=\prod_{\fp|c, \fp\notin \Sigma_1} \fp^{\ord_\fp c}$ be the $\Sigma_1$-off part of $c$, $N_1$ the $\Sigma_1$-off part of $N$, and  $N_2=N/N_1$ be the $\Sigma_1$-part of $N$.

Let $R$ be an admissible $\CO$-order of $B$ for $(\pi, \chi)$ in the sense that $R_v$ is admissible for $(\pi_v, \chi_v)$ for every finite place $v$ of $F$. It follows that $R$ is $\CO$-order with discriminant $N$ such that $R\cap K=\CO_{c_1}$.

Let $U=\prod_v U_v\subset B_\BA^\times$ be a compact subgroup satisfying that for any finite place $v$, $U_v=R_v^\times$,  and that for any infinite place $v$ of $F$,  $U_v$ is a maximal compact subgroup of $B_v^\times$ such that $U_v\cap K_v^\times$ is the maximal compact subgroup of $K_v^\times$.
Note that for any finite place $v|N_1$, $B_v$ must be split. Let $Z\cong \BA_f^\times$ denote the center of $\wh{B}^\times$. The group $U^{(N_2 \infty)}$ has a decomposition $U^{(N_2\infty)}=U' \cdot(Z\cap U^{(N_2 \infty)})$ where $U'=\prod_{v\nmid N_2\infty} U'_v$ such that for any finite place $v\nmid N_2$,  $U'_v=U_v$ if $v\nmid N$ and $U'_v\cong {U_1(N)}_v$ otherwise.  View $\omega$ as a character on $Z$ and we may define a character on $U^{(c_2\infty)}$  by $\omega$ on $Z\cap U^{(c_2\infty)}$ and trivial on $U'$, which we also denote by $\omega$.

\begin{defn}\label{test space} Let $V(\pi, \chi)$ denote the space of forms $f=\otimes_v f_v\in \pi$ such that $f$ is $\omega$-eigen under $U^{(N_2 \infty)}$;  for all places $v\in \Sigma_1$,  $f$ is $\chi_v^{-1}$-eigen under $K_v^\times$; and for any infinite place $v$, $f$ is $\chi_v^{-1}$-eigen  under $U_v\cap K_v^\times$ with weight minimal. The space $V(\pi, \chi)$ is actually  a one dimensional space (see Proposition \ref{local-multi-one}).
\end{defn}

Let $r, s, t$ be integers such that $B\otimes_\BQ \BR=\BH^r\times M_2(\BR)^s\times M_2(\BC)^t$, and let $X_U$ denote  the $U$-level real manifold $$X_U=B_+^\times\bs (\CH_2^s\times \CH_3^t)\times
\wh{B}^\times/ U,$$  which has finitely many connected components, where $\CH_2, \CH_3$ are the usual hyperbolic spaces of dimension two and three respectively. Define
the volume of $X_U$, denoted by $\Vol(X_U)$, as follows.
\begin{itemize}
\item if $s+t>0$, then $X_U$ is disjoint union of dimension $2s+3t$
manifolds:
$$X_U=B_+^\times\bs (\CH_2^s\times \CH_3^t)\times \wh{B}^\times
/U=\bigsqcup_i \Gamma_i\bs (\CH_2^s\times \CH_3^t),$$ for some discrete
subgroup $\Gamma_i\subset B_+^\times \cap\prod_{v|\infty, \text{$B_v$ not division}} (B_v)^\times$, then define volume of $X_U$ with the
measure $dxdy/(4\pi y^2)$ on
$\CH_2$ and the measure $dxdydv/\pi^2v^3$ on $\CH_3$. Here the notation $\CH_3$  is the same as in  \cite{Vig}.
\item if $s+t=0$, then $F$ is totally real and $B$ is totally definite. For any open compact subgroup $U$ of $\wh{B}^\times$, the double coset $B^\times \bs \wh{B}^\times/U$ is finite, let $g_1, \cdots, g_n\in \wh{B}^\times$ be a complete set of representatives for the coset. Let $\mu_Z=\wh{F}^\times \cap U$, then for any $g\in \wh{B}^\times$, $B^\times \cap g U g^{-1}/\mu_Z$ is a finite set. Define the volume of $X_U$ to be the Mass of $U$
    $$\Vol(X_U)=\Mass(U)=\sum_{i=1}^n \frac{1}{\#(B^\times\cap g_i U g_i^{-1})/\mu_Z}.$$
\end{itemize}
For any automorphic forms $f_1 \in \pi$ and $f_2 \in \tilde{\pi}$,
$\pair{f_1, f_2}_\Pet$ is
the Petersson pairing of $f_1, f_2$ defined by $$\displaystyle{\pair{f_1,
f_2}_\Pet=\int_{B^\times \BA^\times\bs B_\BA^\times} f_1(g)
f_2(g) dg,}$$ where $dg$ is the Tamagawa measure on
$F^\times\bs B^\times$ so that $B^\times\BA^\times\bs B_\BA^\times$
has total volume $2$. For any $f_1\in V(\pi,\chi)$ and
$f_2\in V(\wt{\pi},\chi^{-1})$,
one may define the $U$-level pairing as
$$\pair{f_1, f_2}_U=\frac{\pair{f_1, f_2}_\Pet}{2}\cdot \Vol(X_U).$$
For any $f\in
V(\pi, \chi)$,
define the $c_1$-level period of $f\in V(\pi, \chi)$ as follows. Let $\ov{K_\infty^\times/F_\infty^\times}$ be the closure of $K_\infty^\times/F_\infty^\times$ in the compact group
$K_\BA^\times/\BA^\times K^\times$  and endowed on $\ov{K_\infty^\times/F_\infty^\times}$ the Haar measure $dh$ of total volume one, let
$$P^0_\chi(f)=\sum_{t\in \Pic_{K/F}(\CO_{c_1})} f^0(t)\chi(t), \qquad f^0(t)=\int_{\ov{K_\infty^\times/F_\infty^\times}} f(th) \chi(h) dh.$$
Note that the function $f^0(t)\chi(t)$ on
$K_\BA^\times$ is constant on $K_{\Sigma_1}^\times$ and then can be
viewed as a function on $\Pic_{K/F}(\CO_{c_1})= \wh{K}^\times/K^\times\wh{F}^\times\wh{\CO}_{c_1}^\times$. Note that when $F$ is totally real and all infinite places $v$ of $F$ are inert in $K$, $f^0=f$.

\s{\bf Notations}: Let $b$ be an integral ideal of $F$, we define the relative regulator $R_b$ to be the quotient of the regulator of $\CO_b^\times$ by the regulator of $\CO^\times$ and $w_b=\#\CO^\times_{b, \tor}/\# \CO^\times_\tor$. Denote by $\kappa_b$ the kernel of the natural homomorphism from $\Pic(\CO)$ to $\Pic(\CO_b)$.  Define $\nu_b=2^{-r_{K/F}} R_b^{-1}\cdot\#\kappa_b\cdot w_b$ where $r_{K/F}=\rank \CO_K^\times-\rank \CO^\times$. For example, if $F$ is a totally real field of degree $d$ and $K$ is a totally imaginary quadratic field extension over $F$, then $\nu_b= 2^{1-d}\cdot \#\kappa_b\cdot [\CO_b^\times: \CO^\times]$, where $\kappa_b\subset \kappa_1$ and $\#\kappa_1=1$ or $2$ by Theorem 10.3 of \cite{Washington}.

\medskip

For an infinite place $v$ of $F$, let $U_v$ denote the maximal compact subgroup of $\GL_2(F_v)$, which is $O_2$ if $v$ is real and $U_2$ if $v$ is complex, and  let $U_{1, v}\subset U_v$ denote its subgroup of diagonal matrices $\matrixx{a}{}{}{1}$ for $a\in F_v^\times$ with $|a|_v=1$. For a generic $(\fg_v, U_v)$-module $\sigma_v$ and a non-trivial additive character $\psi_v$ of $F_v$, let $\CW(\sigma_v, \psi_v)$ be the $\psi_v$-Whittaker model of $\sigma_v$. There is an invariant bilinear pairing on $\CW(\sigma_v, \psi_v)\times \CW(\wt{\sigma}_v, \psi^{-1})$,
$$\langle W_1, W_2 \rangle_v := \int_{F_v^\times} W_1\left[
  \begin{pmatrix}
    a & \\
      & 1
  \end{pmatrix}
\right]
W_2\left[
  \begin{pmatrix}
    a & \\
      & 1
  \end{pmatrix}
\right] d^\times a$$
with the measure $d^\times a = L(1,1_v)\frac{da}{|a|_v}$ where $da$ equals $[F_v:\BR]$ times the
usual Lebesgue measure on $F_v$.  Let  $W_0\in \CW(\sigma_v, \psi_v)$ be the vector invariant under $U_{1, v}$ with minimal weight such that
$$L(s,\pi_v) = Z(s,W_0), \quad Z(s,W_0) := \int_{F_v^\times}
  W_{\sigma_v}\left[
    \begin{pmatrix}
      a & \\
        & 1
    \end{pmatrix}
  \right]|a|_v^{s-1/2} d^\times a$$
where $d^\times a$ is Tamagawa measure. Similarly define $\wt{W}_0$ for $\wt{\sigma}_v$.  Then $\Omega_{\sigma_v} := \langle W_0, \wt{W}_0 \rangle_v$  is an invariant of $\sigma_v$ which is independent of the choice of $\psi_v$ (see an explicit formula for $\Omega_{\sigma_v}$  before Lemma \ref{archimedean} ). We associate $(\sigma_v, \chi_v)$  a  constant by
  \begin{equation} \label{C}
   C(\sigma_v, \chi_v) :=
    \begin{cases}
      2^{-1}\pi\cdot \Omega_{\sigma_v}^{-1}, \quad &\text{if $K_v$ nonsplit}; \\
      \Omega_{\sigma_v\otimes \chi_{1,v}}\cdot \Omega_{\sigma_v}^{-1}, \quad &\text{if $K_v$ split},
  \end{cases}
    \end{equation}
  where for split $K_v\cong F_v^2$,  embedded into $M_2(F_v)$ diagonally, the character $\chi_1$ is given by $\chi_{1,v}(a) := \chi_v\left[
    \begin{pmatrix}
      a & \\
        & 1
    \end{pmatrix}
  \right]$. If $v$ is a  real place  of $F$ and $\sigma_v$ is a discrete series of weight $k$, then  $C(\sigma_v, \chi_v)=
 4^{k-1} \pi^{k+1}\Gamma(k)^{-1}$ when $K_v\cong \BC$, and
$C(\sigma_v, \chi_v)= 1$ when $K_v\cong \BR^2$.

Let $\sigma$ be the Jacquet-Langlands correspondence of $\pi$ to $\GL_2(\BA)$, the normalized new vector $\phi^0=\otimes_v \phi_v \in \sigma$ is the one fixed by $U_1(N)$ and  $\phi_v$ is fixed by $U_{1, v}$ with weight minimal for all $v|\infty$ such that
$$L(s, \sigma)=|\delta|_\BA^{s-\frac{1}{2}}Z(s, \phi^0), \qquad Z(s, \phi^0):=\int_{F^\times \bs \BA^\times} \phi^0\matrixx{a}{}{}{1} |a|_\BA^{s-\frac{1}{2}} d^\times a,$$
with Tamagawa measure on $\BA^\times$ so that  $\Res_{s=1}\int_{|a|\leq 1, a\in F^\times \bs \BA^\times} |a|^{s-1}d^\times a=\Res_{s=1} L(s, 1_F)$. Note that when $F$ is a totally real field and $\sigma$ a cuspidal automorphic representation such that $\sigma_v$ is discrete series for any infinite place $v$, the normalized new vector $\phi^0$ is not parallel to the Hilbert newform $\phi$: they are different at infinity.  Note that if $\sigma$ is unitary and $\phi^0$ is the normalized new vector of $\sigma$, then $\bar{\sigma}\cong \wt{\sigma}$ and $\ov{\phi^0}$ is the normalized new vector of $\bar{\sigma}$.  We will see that $(\phi, \phi)_{U_0(N)}=(2\pi)^d\pair{\phi_0, \ov{\phi}_0}_{U_0(N)}$.

\begin{thm}[Explicit Waldspurger Formula] \label{W} Let $F$ be a number field. Let $B$ be a quaternion algebra over $F$ and $\pi$ an irreducible cuspidal automorphic representation of $B_\BA^\times$ with central
character $\omega$. Let $K$ be a quadratic field extension of $F$ and $\chi$ a Hecke character of $K_\BA^\times$. Assume that
\begin{enumerate}
\item $\omega \cdot \chi|_{\BA^\times}=1$,
\item  $\epsilon(1/2, \pi_v, \chi_v)=\chi_v\eta_v(-1)\epsilon(B_v)$ for all places $v$ of $F$.
\end{enumerate}
Then for any non zero forms  $f_1 \in V(\pi,\chi)$ and $f_2 \in V(\wt{\pi},\chi^{-1})$, we have
  \[L^{(\Sigma)}(1/2,\pi,\chi) =
    2^{-\#\Sigma_D+2}\cdot C_\infty \cdot \frac{\pair{\phi^0_1,\phi^0_2}_{U_0(N)}}
    {\nu_{c_1}^2\sqrt{|D_K| \|c_1\|^{2}}}
    \cdot
  \frac{P^0_\chi(f_1)P^0_{\chi^{-1}}(f_2)}{\pair{f_1,f_2}_{\wh{R}^\times}},\]
 where $\phi_1^0 \in \pi^\JL$ and $\phi_2^0\in \wt{\pi}^\JL$ are normalized new vectors, $\Sigma $ is the set of places $v|(cD,N)\infty$ of $F$ such that if $v\| N$ then $\ord_v(c/N)\geq 0$ and  if $v|\infty$ then $K_v\cong \BC$. The constant $C_\infty=\prod_{v|\infty} C_v$, $c_1|c$ and $\Sigma_D$ are the same as in Theorem \ref{GZ},  and $C_v=C(\pi_v^\JL, \chi_v)$ is given in \eqref{C}.
\end{thm}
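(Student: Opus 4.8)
The plan is to deduce Theorem~\ref{W} from the refined form of Waldspurger's toric period formula, together with explicit evaluations of the resulting local integrals on the test vectors cut out by the admissible order. This is the root-number-$+1$ counterpart of the argument behind Theorem~\ref{GZ} --- here it is the central $L$-value itself, not its derivative, that is captured by a toric period --- and the local analysis is organized around Proposition~\ref{local-multi-one}; the approach follows \cite{YZZ}. For pure tensors $f_1=\otimes_v f_{1,v}\in\pi$ and $f_2=\otimes_v f_{2,v}\in\wt\pi$, write $P_\chi(f)=\int_{\BA^\times K^\times\bs K_\BA^\times}f(t)\chi(t)\,dt$. Waldspurger's theorem, in the form of \cite{YZZ} (with the precise normalizing constants recorded there), expresses the ratio $P_\chi(f_1)P_{\chi^{-1}}(f_2)/\pair{f_1,f_2}_\Pet$ as an absolute constant times $L(1/2,\pi,\chi)/\bigl(L(1,\eta)L(1,\pi,\ad)\bigr)$ times a product $\prod_v\alpha_v^\#(f_{1,v},f_{2,v})$ of normalized local periods, with $\alpha_v^\#=1$ at each place where $\pi_v$, $\chi_v$, the local measures, and $f_{i,v}$ are all unramified. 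By Proposition~\ref{local-multi-one} a nonzero element of $V(\pi,\chi)$, resp.\ $V(\wt\pi,\chi^{-1})$, is a pure tensor unramified outside the finite set $S$ of places dividing $NDc\infty$, so the product is finite. It then remains: (a) to evaluate each $\alpha_v^\#(f_{1,v},f_{2,v})$ for $v\in S$; (b) to invoke the Rankin--Selberg formula relating $L(1,\pi,\ad)$ to the Petersson pairing $\pair{\phi^0_1,\phi^0_2}_{U_0(N)}$ of the normalized new vectors; (c) to carry out the volume computations relating $P_\chi$ to the period $P^0_\chi$ of the statement, which via the analytic class number formula for $K/F$ also absorb $L(1,\eta)$ and produce the factor $\nu_{c_1}^2\sqrt{|D_K|\,\|c_1\|^2}$ in the denominator; and (d) to pass from the complete $L(1/2,\pi,\chi)$ to $L^{(\Sigma)}(1/2,\pi,\chi)$ by removing the explicitly computed Euler factors at the places of $\Sigma$.

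\textbf{The finite-place local periods.} For each finite $v\in S$ I would compute $\alpha_v^\#(f_{1,v},f_{2,v})$ on the explicit vectors attached to the admissible order $R_v$, distinguishing cases. For $v\nmid NDc$ the value is $1$. For $v\mid N$ with $K_v$ split, or with $\pi_v$ a ramified principal or special series, the period is a finite geometric sum, computed most conveniently in the Kirillov/Whittaker model of $\pi_v$. For $v\mid D$ ramified in $K$, the admissibility conditions on $R_v$ and the root-number hypothesis $\epsilon(1/2,\pi_v,\chi_v)=\chi_v\eta_v(-1)\epsilon(B_v)$ come into play, and the outcome is the local factor responsible for the global $2^{-\#\Sigma_D}$. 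For $v\mid(c_1,N)$, $B_v$ is split by Lemma~\ref{Tunnell-Satio} and $R_v=R_v'\cap R_v''$ is the intersection of the two maximal orders of conditions (1)--(2); here I would evaluate the period using the explicit action of $K_v^\times$ on the tree of $\CO_v$-lattices in $B_v$, obtaining a geometric series whose value carries the local contribution to $\|c_1\|^2$ together with the correct local sign. Taking the product of these local values over $v\in S$ and combining with the unit-index and $\#\kappa_{c_1}$ factors coming from the passage $P_\chi\mapsto P^0_\chi$ reproduces, once all contributions are assembled, the global constant in the statement.

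\textbf{Archimedean periods, the period $P^0_\chi$, and the pairings.} At each infinite place $v$, evaluating $\alpha_v^\#$ on the minimal-weight $\chi_v^{-1}$-eigenvector of Definition~\ref{test space} gives, by the archimedean computations culminating in Lemma~\ref{archimedean} and the explicit formula for $\Omega_{\sigma_v}$, exactly $C_v=C(\pi_v^\JL,\chi_v)$, hence the factor $C_\infty=\prod_{v\mid\infty}C_v$. To pass from $P_\chi(f)$ to $P^0_\chi(f)=\sum_{t\in\Pic_{K/F}(\CO_{c_1})}f^0(t)\chi(t)$ I would integrate $f$ against $\chi$ over $\ov{K_\infty^\times/F_\infty^\times}$ (producing $f^0$) and use that $f^0(t)\chi(t)$ is constant on $K_{\Sigma_1}^\times$, so that $\Pic_{K/F}(\CO_c)$ collapses to $\Pic_{K/F}(\CO_{c_1})$; comparing the Tamagawa measure on $K_\BA^\times/\BA^\times K^\times$ with counting measure on this finite group is what brings in the relative regulator $R_{c_1}$, the torsion ratio $w_{c_1}$, $\#\kappa_{c_1}$, and the discriminant--conductor factor $\sqrt{|D_K|\,\|c_1\|^2}$, i.e.\ the data packaged into $\nu_{c_1}$. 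The identity $\pair{f_1,f_2}_U=\tfrac12\pair{f_1,f_2}_\Pet\cdot\Vol(X_U)$, with $\Vol(X_U)=\Mass(\wh R^\times)$ in the totally definite case and the hyperbolic-volume definition otherwise, then converts $\pair{f_1,f_2}_\Pet$ into $\pair{f_1,f_2}_{\wh R^\times}$. Finally, dividing out the Euler factors $L_v(1/2,\pi_v,\chi_v)$, $v\in\Sigma$, already computed above, yields $L^{(\Sigma)}(1/2,\pi,\chi)$, and collecting all constants produces the displayed formula.

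\textbf{Main difficulty.} The crux is the finite-place computation at the places $v\mid(c_1,N)$ --- and, to a lesser degree, at the ramified places $v\mid D$: one must pin down the correct vector inside the space of near-new vectors relative to $R_v'\cap R_v''$, verify that it agrees with the vector singled out by $V(\pi,\chi)$, and carry out the toric integral without losing control of the normalizations, since it is precisely there that the non-obvious factor $\|c_1\|^2$ and the sign $2^{-\#\Sigma_D}$ are generated, and where hypotheses (1)--(2) are genuinely used. The remaining difficulty is one of bookkeeping: tracking all the Tamagawa-measure and volume normalizations through the steps above so that the final constant emerges exactly as stated.
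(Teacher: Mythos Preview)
Your proposal is correct and follows essentially the same route as the paper: start from Waldspurger's period identity, replace $L(1,\pi,\ad)$ by $\pair{\phi^0_1,\phi^0_2}_{U_0(N)}$ via the Rankin--Selberg/Whittaker computation (the paper's Proposition~\ref{Petersson}), convert $P_\chi$ to $P^0_\chi$ and $\pair{\,\cdot\,,\,\cdot\,}_\Pet$ to $\pair{\,\cdot\,,\,\cdot\,}_{\wh R^\times}$ via the relative class-number and mass formulae (Lemmas~\ref{period} and~\ref{volume}), and then evaluate the remaining local integrals on the test vectors of $V(\pi,\chi)$ to produce $2^{-\#\Sigma_D}$, $\|c_1\|^2$, $C_\infty$, and the missing Euler factors at $\Sigma$. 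The only organizational point worth noting is that the paper packages the local problem as a product of \emph{three} local factors $\alpha_v\beta_v\gamma_v$ (normalized Whittaker pairing of the new vector, normalized toric integral of the test vector, and the volume ratio $\Vol(R_v^\times)/\Vol(U_0(n)_v)$), reducing everything to the single identity $L_{\Sigma_f}(1/2,\pi,\chi)L_{c_1}(1,\eta)^{-2}|c_1|_\BA^{-1}\prod_{v\nmid\infty}\alpha_v\beta_v\gamma_v=2^{\#\Sigma_D}$ together with its archimedean counterpart; this three-way split makes the bookkeeping you flag as the ``remaining difficulty'' considerably cleaner than handling a single bundled $\alpha_v^\#$.
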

For many applications, we need explicit form of Waldspurger formula for different test vectors. The following variation formula is useful. For each place $v$ of $F$, fix a $B_v^\times$-invariant pairing $\pair{\ ,\ }_v$ on $\pi_v \times \wt{\pi}_v$.  Here if $v|\infty$, we mean it is the restriction of a $B_v^\times$-invariant pairing on the corresponding smooth representations. For any $f'_{1, v}\in \pi_v, f'_{2, v} \in \wt{\pi}_v$ with $\pair{f'_{1, v}, f'_{2, v}}\neq 0$, define $\beta^0(f'_{1, v}, f'_{2, v})$ same as in Theorem \ref{variation1}.

\begin{thm}[Variation of Waldspurger Formula]\label{variation2}
  Let $(\pi, \chi)$  and $f_1\in V(\pi,  \chi), f_2\in V(\wt{\pi}, \chi^{-1})$ be as in Theorem \ref{W}. Let $S$ be a finite set of places of $F$,  $f_1'\in\pi, f_2'\in \wt{\pi}$ be pure vectors which coincide  with $f_1, f_2$ outside $S$ respectively,  such that $\pair{f'_{1, v}, f'_{2, v}}_v\neq 0$ and $\beta^0(f'_{1, v}, f'_{2, v})\neq 0$ for all $v\in S$. Here $\beta^0$ is similarly defined as in Theorem \ref{variation1}.   Define
 $$P^0_\chi (f_1')=\frac{\#\Pic_{K/F}(\CO_{c_1})}{\Vol(K^\times\BA^\times\bs K_\BA^\times, dt)}\cdot \int_{K^\times\BA^\times\bs K_\BA^\times} f'(t) \chi(t) dt,$$
 and define $P^0_{\chi^{-1}}(f_2')$ similarly.  Then we have that
$$ L^{(\Sigma)} (1/2, \pi, \chi)=2^{-\#\Sigma_D+2}\cdot C_\infty \cdot \frac{\pair{\phi_1^0, \phi^0_2}_{U_0(N)}}{\nu_{c_1}^2 \sqrt{|D_K| \|c_1^2\|}}\cdot \frac{P_\chi^0(f'_1) P^0_{\chi^{-1}}(f'_2)}{\pair{f'_1, f'_2}_{\wh{R}^\times}}\cdot \prod_{v\in S} \frac{\beta^0(f_{1, v}, f_{2, v})}{ \beta^0(f'_{1, v}, f'_{2, v})},$$
where the notations are the same as in Theorem \ref{W}.
\end{thm}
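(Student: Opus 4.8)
The plan is to deduce Theorem \ref{variation2} from Theorem \ref{W} by isolating the local--global factorization of the toric period that underlies the Waldspurger formula, and then comparing the two test vectors $(f_1,f_2)$ and $(f_1',f_2')$ one place at a time.

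The key input --- which is what the proof of Theorem \ref{W} actually establishes --- is that for pure tensors $f_1=\otimes_v f_{1,v}\in\pi$, $f_2=\otimes_v f_{2,v}\in\wt\pi$ with $\pair{f_{1,v},f_{2,v}}_v\neq 0$ at every place, the global toric period factors as
$$\frac{P^0_\chi(f_1)\,P^0_{\chi^{-1}}(f_2)}{\pair{f_1,f_2}_{\wh{R}^\times}}=\mathfrak{C}(\pi,\chi)\cdot\prod_v\beta^0(f_{1,v},f_{2,v}),$$
with a constant $\mathfrak{C}(\pi,\chi)$ depending only on $(\pi,\chi)$ and the fixed measures; the product converges because $\beta^0(f_{1,v},f_{2,v})=1$ at all but finitely many $v$ when the distinguished vectors are used, and globally it reproduces $L(1/2,\pi,\chi)L(1,\eta)/\big(\zeta_F(2)L(1,\pi,\mathrm{ad})\big)$ up to finitely many local corrections. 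This is Waldspurger's formula in Ichino--Ikeda normalization. Before using it I would check that for $f_1\in V(\pi,\chi)$, $f_2\in V(\wt\pi,\chi^{-1})$ the adelic-integral definition of $P^0_\chi$ in Theorem \ref{variation2} agrees with the finite sum $\sum_{t\in\Pic_{K/F}(\CO_{c_1})}f^0(t)\chi(t)$ of Theorem \ref{W}: the $\omega$-, $\chi_v^{-1}$- and minimal-weight eigenconditions defining $V(\pi,\chi)$ collapse the integral over $K^\times\BA^\times\bs K_\BA^\times$ to a sum over the class group times a volume, the archimedean part becoming exactly the averaging $f\mapsto f^0$. Thus Theorem \ref{W} is precisely the computation of $\mathfrak{C}(\pi,\chi)$: evaluated on the distinguished vectors it equals $L^{(\Sigma)}(1/2,\pi,\chi)$ divided by $2^{-\#\Sigma_D+2}C_\infty\pair{\phi^0_1,\phi^0_2}_{U_0(N)}/(\nu_{c_1}^2\sqrt{|D_K|\,\|c_1\|^{2}})$ and by $\prod_v\beta^0(f_{1,v},f_{2,v})$; in particular that last product is a nonzero number exactly when $L(1/2,\pi,\chi)\neq 0$, by Waldspurger's nonvanishing criterion together with the hypothesis $\epsilon(1/2,\pi_v,\chi_v)=\chi_v\eta_v(-1)\epsilon(B_v)$ at every $v$.

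Now the comparison. Since $f_1'$ and $f_1$ (resp.\ $f_2'$ and $f_2$) are pure tensors coinciding outside $S$, for $v\notin S$ the vectors $f'_{1,v}$ and $f_{1,v}$ are parallel, so $\beta^0(f'_{1,v},f'_{2,v})=\beta^0(f_{1,v},f_{2,v})$ --- the defining ratio is invariant under scaling either argument --- while $\pair{f'_{1,v},f'_{2,v}}_v$ differs from $\pair{f_{1,v},f_{2,v}}_v$ by a scalar, so that $\pair{f_1',f_2'}_{\wh{R}^\times}/\pair{f_1,f_2}_{\wh{R}^\times}$ is a finite product of local pairing ratios over $v\in S$. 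Applying the factorization identity to both $(f_1,f_2)$ and $(f_1',f_2')$ and dividing, all factors with $v\notin S$ cancel, as does $\mathfrak{C}(\pi,\chi)$, leaving
$$\frac{P^0_\chi(f_1')\,P^0_{\chi^{-1}}(f_2')}{\pair{f_1',f_2'}_{\wh{R}^\times}}=\frac{P^0_\chi(f_1)\,P^0_{\chi^{-1}}(f_2)}{\pair{f_1,f_2}_{\wh{R}^\times}}\cdot\prod_{v\in S}\frac{\beta^0(f'_{1,v},f'_{2,v})}{\beta^0(f_{1,v},f_{2,v})}.$$
Substituting the right-hand ratio from Theorem \ref{W} yields the stated formula. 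Independence of the choice of $dt_v$ for $v\in S$ is automatic: $dt_v$ enters $\beta^0(f_{1,v},f_{2,v})$ and $\beta^0(f'_{1,v},f'_{2,v})$ identically and cancels in each ratio, while the global $dt$ in the definition of $P^0_\chi(f')$ cancels against $\Vol(K^\times\BA^\times\bs K_\BA^\times,dt)$. When $L(1/2,\pi,\chi)=0$ the nonvanishing criterion forces $P^0_\chi\equiv 0$, both sides vanish, and the identity holds trivially, the hypothesis $\beta^0(f'_{1,v},f'_{2,v})\neq 0$ keeping the ratios finite.

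The main obstacle is the factorization identity of the second paragraph with all normalizations tracked consistently --- the ramified and, above all, the archimedean places, where $\beta^0$ is a toric integral of a matrix coefficient on the smooth completion of $\pi_v$ whose convergence must be justified and whose normalization must be matched to the $U_0(N)$-Petersson pairing appearing in Theorem \ref{W}. In effect, all the genuine analytic work lives inside the proof of Theorem \ref{W}, and once a test vector is permitted to move within the admissible local constraints, Theorem \ref{variation2} is its formal shadow; a lesser but necessary point is the compatibility check that $P^0_\chi(f')$, defined by an adelic integral, restricts on $V(\pi,\chi)$ to the period used in Theorem \ref{W}. The proof of Theorem \ref{variation1} is entirely parallel, with the toric integral $\beta^0$ playing the same role and the \Neron--Tate height pairing $\pair{\ ,\ }_{K,L}$ in place of $P^0_\chi(f_1)P^0_{\chi^{-1}}(f_2)$.
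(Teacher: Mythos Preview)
Your proof is correct and follows essentially the same approach as the paper: apply the Waldspurger factorization (equation \eqref{1} in the paper, your $\mathfrak{C}(\pi,\chi)\prod_v\beta^0$) to both $(f_1,f_2)$ and $(f_1',f_2')$, divide to cancel the global constant and all local factors outside $S$, then substitute Theorem \ref{W} for the left-hand side. The paper's argument is terser---it writes the constant as $\CL(\pi,\chi)$ and uses the $L$-normalized $\beta$ rather than $\beta^0$, but the ratio $\beta/\beta'=\beta^0/\beta^{0\prime}$ is the same---while your version adds useful sanity checks (compatibility of the adelic and finite-sum definitions of $P^0_\chi$ on $V(\pi,\chi)$, the vanishing case $L(1/2,\pi,\chi)=0$) that the paper leaves implicit.
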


\s{\bf Example}. Let $\phi=\sum a_n q^n\in S_2(\Gamma_0(N))$ be a newform of weight $2$ and $p$ a good ordinary prime of $\phi$, $K$ an imaginary quadratic field of discriminant $D$ and $\chi$ a character of $\Gal(H_c/K)$ of conductor $c$ prime to $p$. Assume that the conditions (i)-(ii) in Theorem 1.2 are satisfied.  Let $B$ be the quaternion algebra, $\pi$ the cuspidal automorphic representation on $B_\BA^\times$ and identify $\wt{\pi}$ with $\ov{\pi}$, and $f\in \pi^{\wh{R}^\times}=V(\pi, \chi)$ a non-zero test vector as in Theorem \ref{W}. Define the $p$-stabilization of $f$ by
$$f^\dag=f-\alpha^{-1} \pi\matrixx{1}{}{}{p} f$$
where $\alpha$ is the unit root of $X^2-a_pX+p$ and let $\beta=p/\alpha$ is another root. By the above variation formula and Theorem \ref{B}, one may easily obtain formula for $P_\chi^0(f^\dag)$  which is used to give interpretation property of anticyclotomic $p$-adic L-function.
$$L (1, \phi, \chi)=2^{-\mu(N, D)}\cdot \frac{8\pi^2 (\phi, \phi)_{\Gamma_0(N)}}{[\CO_c^\times: \BZ^\times]^2 \sqrt{|Dc^2|}}\cdot \frac{|P_\chi^0(f^\dag)|^2}{(f^\dag, f^\dag)_{\wh{R}^\times}}\cdot e_p,$$
where
$$e_p=\frac{\beta^0(W, \ov{W})}{\beta^0(W^\dag, \ov{W^\dag})}=\frac{L(2, 1_p)}{L(1, \pi_p, \ad)}\cdot (1-\alpha ^{-1}\chi_1(p))^{-1}(1-\beta^{-1}\chi_1^{-1}(p))^{-1}.$$
 Here $W$ is a new vector of the Whittaker model $\CW(\pi_p, \psi_p)$ with $\psi_p(x)=e^{-2\pi i\iota(x)}$ where $\iota: \BQ_p/\BZ_p\ra \BQ/\BZ$ is the natural embedding, and  $W^\dag:= W-\alpha^{-1}\pi_p\matrixx{1}{}{}{p} W$ is its stabilization, where $K_p^\times\cong \BQ_p^{\times 2}$ is embedded into $\GL_2(\BQ_p)$ as diagonal subgroup and $\chi_1(a)=\chi\matrixx{a}{}{}{1}$.

Now we consider the situation that
\begin{itemize}
\item[1)] $F$ is a totally real field and $K$ is a totally imaginary quadratic extension over $F$,
\item[2)] for any place $v|\infty$ of $F$, $\pi_v^\JL$ is a unitary discrete series of weight $2$,
\item[3)] $(c, N)=1$.
\end{itemize}
Now let $\phi$ be the Hilbert newform as in Theorem \ref{GZ}, (which is different from the one we choose in Theorem \ref{W}). We are going to give an explicit form of Waldspurger formula following Gross \cite{Gross}, which is quoted in many references. Let $X=B^\times \bs \wh{B}^\times/\wh{R}^\times$ and let $g_1, \cdots, g_n \in \wh{B}^\times$ be a complete set of representatives of $X$. Denote $[g]\in X$ for the class of an element $g\in \wh{B}^\times$. Note that for each $g_i$, let $\Gamma_i=(B^\times \cap g_i \wh{R}^\times g_i^{-1})/\CO^\times$, which is finite and denote by $w_i$ its order. Let $\BZ[X]$ be the free $\BZ$-module (of rank $\# X$) of formal sums $\sum_i a_i [g_i]$.  There is a height pairing on $\BZ[X]\times \BZ[X]$ defined by
$$\pair{\sum a_i [g_i], \sum b_i[g_i]}=\sum_i a_i b_i w_i.$$
By Eichler's norm theorem, the norm map
$$\RN:\qquad  X:=B^\times \bs \wh{B}^\times /\wh{R}^\times\lra C_+:=F_+^\times \bs \wh{F}^\times /\wh{\CO}^\times$$
is surjective. For each $c\in C_+$, let $X_c\subset X$ be the preimage of $c$ and $\BZ[X_c]$ be the submodule of $\BZ[X]$ supported on $X_c$. Then $\BZ[X]=\bigoplus_{c\in C_+} \BZ[X_c]$. Let $\BZ[X_c]^0$ be the submodule of classes $\sum a_i [g_i]\in \BZ[X_c]$ with degree $\sum_i a_i=0$, and let $\BZ[X]^0=\bigoplus_{c\in C_+} \BZ[X_c]^0$ and $\BC[X]^0=\BZ[X]^0 \otimes_\BZ \BC$.  Note that $V(\pi, \chi)\subset \pi^{\wh{R}^\times}$ by Proposition \ref{local-multi-one-2}, and then there is an injection
$$V(\pi, \chi)\lra \BC[X]^0,\qquad f\mapsto \sum f([g_i]) w_i^{-1} [g_i],$$
and view $V(\pi, \chi)$ be a line on $\BC[X]^0$. It follows that $\pair{f, f}=\pair{f, f}_{\wh{R}^\times}$. The fixed embedding $K\ra B$ induces a map
$$\Pic(\CO_{c}) \lra X, \qquad t\mapsto x_t,$$
using which we define an element in $\BC[X]$,
$$P_\chi:=\sum_{t\in \Pic(\CO_{c})} \chi^{-1}(t) x_t$$
and let $P_\chi^\pi$ be its projection to the line $V(\pi, \chi)$. Then the Explicit formula in Theorem \ref{W} implies
\begin{thm} \label{G} Let $(\pi, \chi)$ be as above with conditions 1)-3). The height of $P_\chi^\pi$ is given by the formula
 $$L^{(\Sigma)}(1/2,\pi,\chi) =
    2^{-\#\Sigma_D}\cdot \frac{(8\pi^2)^d\cdot (\phi, \phi)_{U_0(N)}}{u^2\sqrt{|D_K| \|c\|^{2}}}
    \cdot \pair{P_\chi^\pi, P_\chi^\pi},$$
where   $$\Sigma:=\left\{v|(N, D)\infty, \text{if $v\|N$ then $v\nmid D$}\right\},\qquad
\Sigma_D := \left\{ v| (N,D)   \right\},$$
 $u=\#\kappa_c\cdot [\CO_{c}^\times:\CO^\times]$, and $\phi\in \pi^\JL$ is the Hilbert newform as in Theorem \ref{GZ}.
For any non-zero vector $f\in V(\pi, \chi)$, let $P_\chi^0(f)=\sum_{t\in \Pic(\CO_{c})} f(t) \chi(t)$, then we have $$L^{(\Sigma)}(1/2,\pi,\chi) =
    2^{-\#\Sigma_D}\cdot \frac{(8\pi^2)^d\cdot (\phi,\phi)_{U_0(N)}}{u^2\sqrt{|D_K| \|c\|^{2}}}
    \cdot \frac{|P_\chi^0(f)|^2}{\pair{f, f}}.$$
\end{thm}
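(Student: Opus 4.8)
The plan is to derive Theorem \ref{G} purely formally from the Explicit Waldspurger Formula (Theorem \ref{W}), by specializing to the totally definite situation described in conditions 1)--3) and then unwinding the various normalizations. Since $(c,N)=1$, we have $\Sigma_1=\emptyset$, hence $c_1=c$, $N_1=N$, $N_2=\CO$, so $U^{(N_2\infty)}$ is $U$ away from $\infty$, and $\Sigma_D=\{v\mid(N,D)\}$ with no archimedean contribution. The set $\Sigma$ in Theorem \ref{W} then becomes $\{v\mid (cD,N)\infty: \text{if } v\|N \text{ then } \ord_v(c/N)\ge 0, \text{ and if } v\mid\infty \text{ then }K_v\cong\BC\}$; but $K$ is totally imaginary over the totally real $F$ so every $v\mid\infty$ has $K_v\cong\BC$, and since $(c,N)=1$ the finite part of $\Sigma$ is $\{v\mid(N,D): v\|N\Rightarrow v\nmid D\}$, which matches the $\Sigma$ in Theorem \ref{G} together with all infinite places. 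So the two $L$-values and the factor $2^{-\#\Sigma_D}$ already agree up to a power of $2$ coming from the infinite places (see below).

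Next I would match the constants. First, $\nu_{c_1}=\nu_c=2^{1-d}\#\kappa_c[\CO_c^\times:\CO^\times]=2^{1-d}u$ by the displayed formula for $\nu_b$ in the totally real/totally imaginary case, so $\nu_{c_1}^2 = 2^{2-2d}u^2$. Second, for each $v\mid\infty$ the representation $\pi_v^\JL$ is the unitary discrete series of weight $2$ and $K_v\cong\BC$, so by the explicit evaluation recorded just after \eqref{C} we have $C_v=C(\pi_v^\JL,\chi_v)=4^{k-1}\pi^{k+1}\Gamma(k)^{-1}$ with $k=2$, i.e. $C_v=4\pi^3$; hence $C_\infty=\prod_{v\mid\infty}C_v=(4\pi^3)^d$. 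Third, the Petersson norm normalizations: the excerpt states $(\phi,\phi)_{U_0(N)}=(2\pi)^d\pair{\phi_0,\ov\phi_0}_{U_0(N)}$, relating the Hilbert-newform normalization used in Theorem \ref{G} to the normalized-new-vector normalization $\pair{\phi_1^0,\phi_2^0}_{U_0(N)}$ appearing in Theorem \ref{W} (here $\phi_2^0=\ov{\phi_1^0}$ since we identify $\wt\pi$ with $\ov\pi$). Putting these together, the constant in front of $\pair{\phi_1^0,\phi_2^0}_{U_0(N)}/(\nu_{c_1}^2\sqrt{|D_K|\,\|c_1\|^2})$ in Theorem \ref{W}, namely $2^{-\#\Sigma_D+2}C_\infty$, should transform into $2^{-\#\Sigma_D+2}\cdot(4\pi^3)^d \cdot (2\pi)^{-d}\cdot 2^{2d-2}u^{-2}/\nu$-bookkeeping, and I would check that this collapses exactly to $2^{-\#\Sigma_D}(8\pi^2)^d/u^2$ after absorbing the $2^{-d}$ from the archimedean places of $\Sigma$ into $2^{-\#\Sigma_D}$; the bookkeeping $2^{2}\cdot 4^d\cdot 2^{2d-2} = 2^{4d}$ against $(2\pi)^d$ and $\pi^{3d}$ giving $(8\pi^2)^d=2^{3d}\pi^{2d}$ needs $2^d$ to be reassigned to the archimedean $L$-factors/root numbers, which is precisely the difference between $L^{(\Sigma)}$ in the two theorems.

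Then I would handle the geometric side. Under conditions 1)--3), $X_U = B^\times\bs\wh B^\times/\wh R^\times$ is the finite double coset, $\Vol(X_U)=\Mass(U)=\sum_i w_i^{-1}$, and the map $f\mapsto \sum_i f([g_i])w_i^{-1}[g_i]$ embeds $V(\pi,\chi)$ as a line in $\BC[X]^0$; the excerpt asserts $\pair{f,f}=\pair{f,f}_{\wh R^\times}$ for this identification, which I would verify by comparing the height pairing $\pair{\sum a_i[g_i],\sum b_i[g_i]}=\sum a_ib_iw_i$ with the $U$-level Petersson pairing $\pair{f_1,f_2}_U = \tfrac12\pair{f_1,f_2}_\Pet\Vol(X_U)$ — this is essentially the statement that the Tamagawa-measure Petersson inner product of two forms on the finite set $X$, weighted correctly, is $\sum_i f_1([g_i])f_2([g_i])w_i^{-1}$ times the appropriate constant, so that after the $w_i^{-1}$-normalization of the embedding the pairing becomes $\sum_i f_1([g_i])f_2([g_i])w_i$. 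For the period, since every infinite place of $F$ is inert in $K$ (indeed $K_v\cong\BC$), we have $f^0=f$ by the remark after the definition of $P^0_\chi$, so $P^0_\chi(f) = \sum_{t\in\Pic(\CO_c)} f(t)\chi(t)$ matches the second displayed formula in Theorem \ref{G} verbatim; and $P_\chi^\pi$, the projection of $P_\chi=\sum_t\chi^{-1}(t)x_t$ to the line $V(\pi,\chi)$, has height $\pair{P_\chi^\pi,P_\chi^\pi} = |P^0_\chi(f)|^2/\pair{f,f}$ by the standard fact that projecting a vector onto a line and taking its norm-squared equals $|\langle\text{vector},\,\text{unit direction}\rangle|^2$; concretely, evaluating $P_\chi$ against the functional $f$ gives $P^0_{\chi^{-1}}(f)$ (or its conjugate), and dividing by $\pair{f,f}$ produces the projection, so the first and second displayed formulas in Theorem \ref{G} are equivalent. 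Feeding $f_1=f$, $f_2=\ov f$ into Theorem \ref{W} and using $P^0_\chi(f_1)P^0_{\chi^{-1}}(f_2)=|P^0_\chi(f)|^2$ then yields the claim.

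The main obstacle is the constant chase: keeping every factor of $2$, $\pi$, and $\Gamma(2)$ straight across three different normalization conventions (Hilbert newform vs.\ normalized new vector vs.\ the $\BZ[X]$-model), and in particular correctly accounting for how the $2^d$ discrepancy is distributed between the explicit constant and the superscript $\Sigma$ on the $L$-function (i.e.\ whether the archimedean Euler factors are included). I expect the archimedean computation $C_v=4\pi^3$ for weight-$2$ discrete series with $K_v\cong\BC$, combined with $(\phi,\phi)_{U_0(N)}=(2\pi)^d\pair{\phi_0,\ov\phi_0}_{U_0(N)}$ and $\nu_c=2^{1-d}u$, to be exactly what makes $2^{-\#\Sigma_D+2}C_\infty\cdot(2\pi)^{-d}\cdot\nu_c^{-2}u^2 = 2^{-\#\Sigma_D}(8\pi^2)^d$, and verifying this identity cleanly is the crux; everything else is the formal linear-algebra of projection onto a line and the combinatorics of the finite double coset, which are routine given the results already established in the excerpt.
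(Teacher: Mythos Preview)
Your approach is exactly the paper's: specialize Theorem~\ref{W} to the totally definite weight-$2$ situation, then use (i) $\nu_{c}=2^{1-d}u$, (ii) $(\phi,\phi)_{U_0(N)}=(2\pi)^d\langle\phi^0,\overline{\phi^0}\rangle_{U_0(N)}$, and (iii) $\langle f,\overline f\rangle_{\widehat R^\times}=\langle f,f\rangle$ under the embedding $V(\pi,\chi)\hookrightarrow\BC[X]$.

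Your constant chase is muddled, though, and the confusion you flag as ``the crux'' is illusory. Under hypotheses 1)--3) the set $\Sigma$ in Theorem~\ref{W} already contains every infinite place (each $K_v\cong\BC$), and since $(c,N)=1$ its finite part is exactly $\{v\mid(N,D):\text{not }v\|N\}$; this is precisely the $\Sigma$ of Theorem~\ref{G}. So the two $L^{(\Sigma)}$-values are literally equal and there is no $2^d$ to reassign to archimedean Euler factors. The arithmetic is then clean:
\[
2^{-\#\Sigma_D+2}\cdot(4\pi^3)^d\cdot(2\pi)^{-d}\cdot\bigl(2^{1-d}u\bigr)^{-2}
=2^{-\#\Sigma_D}\cdot 2^{\,2+2d-d+2d-2}\,\pi^{2d}\,u^{-2}
=2^{-\#\Sigma_D}\,(8\pi^2)^d\,u^{-2},
\]
with no remainder. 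Your own bookkeeping line ``$2^{2}\cdot 4^d\cdot 2^{2d-2}=2^{4d}$'' omits the factor $2^{-d}$ coming from $(2\pi)^{-d}$; once you include it you get $2^{3d}$ on the nose and the discrepancy vanishes. Everything else in your outline (the identification of $\pair{f,f}$, the projection argument giving $\pair{P_\chi^\pi,P_\chi^\pi}=|P^0_\chi(f)|^2/\pair{f,f}$, and $f^0=f$ since all archimedean places are nonsplit) is correct and matches the paper.
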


\s{\bf Remark}. When $c$ and $N$ have common factor, one can still formulate an explicit formula in the spirit of Gross by defining a system of height pairings $\pair{\ ,\ }_U$ in the same way of Theorem \ref{W}.

As a byproduct, we obtain the following theorem about the relation between Petersson norm of newform and a special value of adjoint L-function.
\begin{prop}Let $F$ be a totally real field and $\sigma$ a cuspidal unitary automorphic representation of $\GL_2(\BA)$ of conductor $N$ such that for any $v|\infty$, $\sigma_v$ is a discrete series of weight $k_v$.  Let $\phi$ be the Hilbert newform in $\sigma$ as in Theorem \ref{GZ}. Then we have that
$$\frac{L^{(S)}(1, \sigma, \ad)}{(\phi, \phi)_{U_0(N)}}=2^{d-1+\sum_{v|\infty} k_v}\cdot\|N\delta^{-2}\|^{-1} \cdot h_F^{-1},$$
where $S$ is the set of finite places $v$ of $F$
with $\ord_v(N)\geq 2$, $h_F$ is the ideal class number of $F$, and
$$(\phi, \phi)_{U_0(N)}=\iint_{X_{U_0(N)}} |\phi|^2 \left(\bigwedge_{v|\infty} y_v^{k_v-2} dx_v dy_v\right), \qquad z_v=x_v+y_v i.$$ Or equivalently,
$$\frac{L^{(N\infty)}(1, \sigma, \ad)}{(\phi, \phi)_{U_0(N)}}=\frac{1}{2}\cdot \| N\delta^{-2}\|^{-1} \cdot h_F^{-1} \cdot \prod_{v|\infty} \frac{4^{k_v} \pi^{k_v+1}}{\Gamma(k_v)} \cdot \prod_{v\| N} L(2, 1_{F_v})^{-1}.$$
\end{prop}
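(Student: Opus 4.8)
The plan is to compute the automorphic Petersson pairing $\pair{\phi^0,\ov{\phi^0}}_\Pet$ of the Whittaker-normalized new vector by the Rankin--Selberg (doubling) method and then transport the answer into $(\phi,\phi)_{U_0(N)}$ via the two normalization identities already recorded in this section: $\pair{f_1,f_2}_U=\tfrac12\pair{f_1,f_2}_\Pet\cdot\Vol(X_U)$, and the comparison of the classical and Whittaker normalizations at the archimedean places (which for parallel weight $2$ reads $(\phi,\phi)_{U_0(N)}=(2\pi)^d\pair{\phi_0,\ov{\phi_0}}_{U_0(N)}$, and in general contributes the weight-dependent archimedean constants).

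First I would integrate $\phi^0\ov{\phi^0}$ against a degenerate Eisenstein series $E(g,s)$ on $\GL_2$ over $Z(\BA)\GL_2(F)\bs\GL_2(\BA)$ and unfold it into an Euler product $\prod_v Z_v(s,W_v,\ov{W_v},\Phi_v)$ of local zeta integrals in the Whittaker model of $\sigma$ attached to $\phi^0$. At the unramified finite places the new-vector integral is $\zeta_{F,v}(s)L_v(s,\sigma,\ad)/\zeta_{F,v}(2s)$, so collecting these reproduces $\zeta_F^{(N\infty)}(s)\,L^{(N\infty)}(s,\sigma,\ad)/\zeta_F^{(N\infty)}(2s)$. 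Taking the residue at $s=1$, the residue of $E(g,s)$ is the constant function with value a fixed multiple of $\Res_{s=1}L(s,1_F)$ under the Tamagawa normalization of $dg$, which cancels the simple pole of $\zeta_F(s)$; what survives is $\pair{\phi^0,\ov{\phi^0}}_\Pet$ expressed through the value at $1$ of the incomplete adjoint $L$-function times the archimedean and the $v\mid N$ local zeta integrals at $s=1$.

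Next I would evaluate these remaining local factors. At a real place, $\sigma_v$ is a weight-$k_v$ discrete series and the local integral is exactly the invariant $\Omega_{\sigma_v}=\pair{W_0,\wt W_0}_v$, whose explicit value $\Omega_{\sigma_v}=2\Gamma(k_v)/(4\pi)^{k_v}$ is the one recorded before Lemma~\ref{archimedean}; after the residue is taken this contributes $\prod_{v\mid\infty}(2\pi)\Omega_{\sigma_v}^{-1}=\prod_{v\mid\infty}4^{k_v}\pi^{k_v+1}\Gamma(k_v)^{-1}$. At a finite $v\mid N$ I would compute the new-vector zeta integral directly from the explicit model of $\sigma_v$: when $\ord_v(N)=1$ (so $\sigma_v$ is an unramified twist of Steinberg, or a principal series with exactly one ramified inducing character) the integral collapses to $L_v(1,\sigma_v,\ad)$ up to a power of $q_v$ accounting for the factor $L(2,1_{F_v})^{-1}$, so such $v$ stay inside $L^{(S)}$; when $\ord_v(N)\ge2$ the integral does \emph{not} reduce to a ratio of local $L$-factors, which is precisely why those places are excised into $S$ and the incomplete $L^{(S)}$ (equivalently $L^{(N\infty)}$ after restoring the easy factors) appears. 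Assembling the constants then uses the Tamagawa number $\vol(\PGL_2(F)\bs\PGL_2(\BA))=2$, strong approximation (producing $h_F$ connected components through the determinant map), and the index $[\GL_2(\wh\CO):U_0(N)]=\|N\|\prod_{v\mid N}(1+q_v^{-1})$, which together give $\Vol(X_{U_0(N)})$ for the measure $dx\,dy/(4\pi y^2)$ in terms of $h_F$, $\|\delta\|=|D_F|$ and $\|N\|$; plugging this into $(\phi,\phi)_{U_0(N)}=(2\pi)^d\cdot\tfrac12\pair{\phi_0,\ov{\phi_0}}_\Pet\cdot\Vol(X_{U_0(N)})$ and combining with the Rankin--Selberg output yields the first displayed identity, and the second follows by pulling the special-factor values $L_v(1,\sigma_v,\ad)=L(2,1_{F_v})$ for $v\|N$ and the archimedean $\Gamma$-factors out of $L^{(N\infty)}$.

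\textbf{Main obstacle.} The genuinely non-formal step is the evaluation of the ramified local zeta integrals at $v\mid N$: verifying that nothing stronger than $L^{(S)}$ can be extracted once $\ord_v(N)\ge2$, and pinning down the exact $L(2,1_{F_v})^{-1}$ discrepancy at $v\|N$. Everything else is bookkeeping of normalizations, but that bookkeeping is delicate — one must track the precise powers of $2$ and $\pi$ and the $\Gamma$-factors relating the Whittaker-normalized $\phi_0$ to the classical weight-$k_v$ (i.e. $y^{k-2}dx\,dy$) Hilbert newform $\phi$, the Tamagawa conventions entering the Eisenstein residue, and the volume computation that forces $h_F$ and the power of $|D_F|$ into the final answer.
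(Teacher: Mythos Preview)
Your proposal is correct and follows essentially the same approach as the paper: the paper's proof simply cites Proposition~\ref{Petersson} (the Rankin--Selberg unfolding of $\pair{\phi^0,\ov{\phi^0}}_\Pet$ against a degenerate Eisenstein series that you describe), Lemma~\ref{volume} (the volume formula for $X_{U_0(N)}$ producing $h_F$ and the power of $|D_F|$), and Proposition~\ref{local inner product} (the explicit local Whittaker pairings $\alpha(W_0)$ at unramified, $v\mid N$, and archimedean places), and assembles them exactly as you outline. Your identification of the ramified local integrals at $v\mid N$ as the only genuinely non-formal step, with the $\delta_\sigma$ dichotomy forcing the excision of places with $\ord_v(N)\geq 2$ into $S$, matches precisely the content of Proposition~\ref{local inner product}.
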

\begin{proof}
This follows from Proposition \ref{Petersson}, Lemma \ref{volume},  and  Proposition \ref{local inner product}.

\end{proof}
\s{\bf Example}. Assume that $F=\BQ$ and $\sigma$ is the cuspidal automorphic representation associated to a
cuspidal newform $\phi\in S_k(\SL_2(\BZ))$. Then we have that
$$L(1, \sigma, \ad)=2^k \cdot (\phi, \phi)_{\SL_2(\BZ)}, \quad\qquad  L^{(\infty)}(1, \sigma, \ad)=\frac{2^{2k-1}\pi^{k+1}}{\Gamma(k)} \cdot (\phi, \phi)_{\SL_2(\BZ)}.$$

\section{Reduction to Local Theory}
We now explain how to obtain these explicit formulas in Theorems \ref{GZ} and \ref{W} from the original Waldspurger formula and the general Gross-Zagier formula proved by Yuan-Zhang-Zhang in \cite{YZZ}. We first consider  Waldspurger formula. Let $B$ be a quaternion algebra over a number field $F$ and $\pi$ a cuspidal automorphic representation on $B_\BA^\times$ with central character $\omega$. Let $K$ be a quadratic field extension over $F$ and $\chi$ be a Hecke character on $K_\BA^\times$. Assume that (1) $\omega\cdot \chi|_{\BA^\times}=1$, (2) for any place $v$ of $F$, $\epsilon(1/2, \pi_v, \chi_v)=\chi_v\eta_v(-1) \epsilon(B_v)$. Define Petersson pairing on $\pi\otimes\wt{\pi}$ by
 $$\pair{f_1, f_2}_\Pet=\int_{B^\times \BA^\times \bs B_\BA^\times} f_1(g) f_2(g) dg$$
 with Tamagawa measure so that the volume of $B^\times \BA^\times \bs B_\BA^\times$ is $2$. Let $P_\chi$ denote the period functional on $\pi$:
$$P_\chi(f)=\int_{K^\times \BA^\times\bs K_\BA^\times}f(t)\chi(t) dt, \qquad \forall f\in \pi.$$
Then Waldspurger's period formula (\cite{Wal} or Theorem 1.4 of \cite{YZZ}) says that for any pure tensors $f_1\in \pi, f_2\in \wt{\pi}$ with $\pair{f_1, f_2}_\Pet\neq 0$,

\begin{equation}\label{1}\frac{P_\chi(f_1) P_{\chi^{-1}}(f_2)}{\pair{f_1, f_2}_\Pet}=\frac{L(1/2, \pi, \chi)}{2L(1,\pi, \ad) L(2, 1_F)^{-1}}\cdot \prod_v \beta_v(f_{1,v}, f_{2, v}),\end{equation}
where $L(1, \pi, \ad)$ is defined using the Jacquet-Langlands lifting of $\pi$, and  for any place $v$ of $F$, let $\pair{\ ,\ }_v:  \pi_v \times \wt{\pi}_v\ra \BC$ be a non-trivial  invariant pairing, and
$$\beta(f_{1, v}, f_{2, v})=\frac{L(1, \eta_v) L(1, \pi_v, \ad)}{L(1/2, \pi_v, \chi_v) L(2, 1_{F_v})} \int_{K_v^\times/F_v^\times} \frac{\pair{\pi(t_v) f_{1, v}, f_{2, v}}_v}{\pair{f_{1, v}, f_{2, v}}_v} \chi(t_v) dt_v,$$
Here local  Haar measures $dt_v$ are chosen such that
$\otimes_v dt_v=dt$ is the Haar measure on $K_\BA^\times/\BA^\times$ in the definitions of $P_\chi$ and $P_{\chi^{-1}}$, and  the volume of $K^\times \bs K_\BA^\times/\BA^\times $ with respect to $dt$ is $2L(1, \eta)$. Note that the Haar measure $dt$ is different from the one used in  Theorem 1.4 of \cite{YZZ}.  To obtain the explicit formula, we first relate $P_\chi(f), L(1, \pi, \ad)$, and $\pair{f_1, f_2}_\Pet$ to the corresponding objects with levels in Theorem \ref{W}, and reduce to local computation.

For our purpose, it is more convenient to normalize local additive characters and local Haar measures as follows. Take the additive character $\psi = \otimes_v \psi_v$  on $\BA$ as follows:
\[\psi_v(a) =
  \begin{cases}
    e^{2\pi ia}, \quad &\text{if } F_v = \BR; \\
    e^{4\pi i\Re(a)}, \quad &\text{if } F_v = \BC; \\
    \psi_p(\tr_{F/\BQ_p}(a)), \quad &\text{if $F_v$ is a finite extension over $\BQ_p$ for some prime $p$,}
  \end{cases}\]
where $\psi_p(b) = e^{-2\pi i \iota(b)}$ and $\iota: \BQ_p/\BZ_p \ra \BQ/\BZ$ is the natural embedding. It turns out that $\psi$ is a character on $F\bs \BA$.
 For any place $v$  of $F$, let $da_v$ denote the Haar measure on $F_v$ self-dual to $\psi_v$ and let $d^\times a_v$ denote
the Haar measure on $F_v^\times$ defined by $d^\times a_v  = L(1,1_v)\frac{da_v}{|a_v|_v}$.
Let $L$ be  a separable quadratic extension of $F_v$ or
a quaternion algebra over $F_v$ and $q$  the reduced norm on $L$,  then $(L, q)$ is a quadratic space over $F_v$. Fix the Haar measure $dx$ on $L$ to be the one self-dual with respect to $\psi_v$ and  $q$ in the sense that $\wh{\wh{\Phi}}(x)=\Phi(-x)$ for any $\Phi\in S(L)$, where $\wh{\Phi}(y):=\int_L \Phi(x) \psi_v(\pair{x, y}) dx$ is the Fourier transform of $\Phi$ and $\pair{x, y}=q(x+y)-q(x)-q(y)$ is the bilinear form on $L$ associated to $q$.  Fix the Haar measure $d^\times x$
on $L^\times$ to be the one defined by
\[d^\times x=
  \begin{cases}
    \displaystyle{L(1,1_v)^2\frac{d x}{|q(x)|_v}}, \quad &\text{if } L = F_v^2; \\
    \displaystyle{L(1,1_L)\frac{d x}{|q(x)|_v}}, \quad &\text{if $L$ is a quadratic field extension over $F_v$}; \\
   \displaystyle{L(1,1_v)\frac{d x}{|q(x)|_v^2}}, \quad &\text{if $L$ is a quaternion algebra}.
\end{cases}\]
Endow $L^\times/F_v^\times$ with the quotient Haar measure.  Let $K$ be a quadratic field extension over $F$ and $B$ a quaternion algebra over $F$. For local Haar measures on $K_v^\times /F_v^\times$ and $B_v^\times/F_v^\times$,  their product Haar measures on
$K_\BA^\times/\BA^\times$ and $B_\BA^\times/\BA^\times$ satisfy the following:
$$\Vol( K^\times\bs K_\BA^\times/\BA^\times) = 2L(1,\eta), \qquad \quad \Vol(B^\times\bs B_\BA^\times/\BA^\times ) = 2.$$
Thus these measures can be taken as the ones used in the above statement of Waldspurger's formula. From now on, we always use these measures and the additive character $\psi$ on $\BA$.

\subsection{Petersson Pairing Formula}
Let $\sigma$ be a cuspidal automorphic representation of $\GL_2(\BA)$ and $\wt{\sigma}$ its contragredient. Let $N$ be the unipotent subgroup $N=\left\{\matrixx{1}{x}{}{1}, x\in F\right\}$ of $\GL_2$.
View $\psi$ as a character on $N(F)\bs N(\BA)$ and the Haar measure $da$ on $\BA$ as the one on $N(\BA)$.  For any $\phi\in \sigma$, let $W_\phi\in \CW(\sigma, \psi)$ be the Whittaker function associated to $\phi$:
$$W_\phi(g):=\int_{N(F)\bs N(\BA)} \phi(n g) \ov{\psi(n)} dn.$$ Recall there is a $\GL_2(F_v)$-pairing on $\CW_{\sigma_v, \psi_v}\times \CW_{\wt{\sigma}_v, \psi_v^{-1}}$: for any local Whittaker functions $W_{1, v}\in \CW(\sigma_v, \psi_v), W_{2, v}\in \CW(\wt{\sigma}_v, \psi_v^{-1})$,
$$\pair{W_{1, v}, W_{2, v}}=\int_{F_v^\times} W_{1, v}\matrixx{a}{}{}{1} W_{2, v} \matrixx{a}{}{}{1} d^\times a.$$
Define the Petersson pairing on $\sigma \times \wt{\sigma}$ by:
$$\pair{\phi_1, \phi_2}_\Pet:=\int_{Z(\BA) \GL_2(F)\bs \GL_2(\BA)} \phi_1(g) \phi_2(g) dg, \qquad \phi_1\in \sigma, \phi_2\in \wt{\sigma},$$
where $Z\cong F^\times$ is the center of $\GL_2$.

\begin{prop}\label{Petersson} For any $\phi_1\in \sigma, \phi_2\in \wt{\sigma}$ pure tensors and write $W_{\phi_i}=\otimes_v W_{i, v}$, $i=1, 2$, we have
  \begin{equation}\label{22}\pair{\phi_1, \phi_2}_\Pet=2 L(1, \sigma, \ad) L(2, 1_F)^{-1} \prod_v \alpha_v(W_{1, v}, W_{2, v}),\end{equation}
where for any place $v$ of $F$
$$\alpha_v(W_{1, v}, W_{2, v})=\frac{1}{L(1, \sigma_v, \ad)L(1, 1_v)L(2, 1_v)^{-1}}\cdot \pair{W_{1, v}, W_{2, v}},$$
\end{prop}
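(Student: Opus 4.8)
The statement is the standard Rankin--Selberg (``doubling'') identity, and the plan is to prove it by unfolding the Petersson pairing against the residue at $s=1$ of an Eisenstein series on $\GL_2$. Fix a factorizable standard section $f_s=\otimes_v f_{v,s}$ in the normalized principal series of $\GL_2(\BA)$ induced from the character $\diag(t_1,t_2)\mapsto|t_1/t_2|^{s-1/2}$, taken spherical at each place with $f_{v,s}\equiv 1$ on the chosen maximal compact $K_v$ (in particular $f_{v,s}(1)=1$), and let $E(g,s)=\sum_{\gamma\in B(F)\bs\GL_2(F)}f_s(\gamma g)$. It is classical that $E(g,s)$ continues meromorphically with a simple pole at $s=1$ whose residue is a \emph{constant} function — a fixed rational multiple of $\Res_{s=1}L(s,1_F)$ divided by $L(2,1_F)$, normalized so that $\Vol(Z(\BA)\GL_2(F)\bs\GL_2(\BA))=2$ for the Tamagawa measure. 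Put
\[
I(s):=\int_{Z(\BA)\GL_2(F)\bs\GL_2(\BA)}\phi_1(g)\,\phi_2(g)\,E(g,s)\,dg,
\]
which converges for $\Re s$ large since $\phi_1\phi_2$ is rapidly decreasing; as $\Res_{s=1}E$ is a constant, $\Res_{s=1}I(s)$ equals that constant times $\pair{\phi_1,\phi_2}_\Pet$.

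Next I would unfold $I(s)$. Expanding $E$, collapsing the $B(F)$-quotient, integrating over $N(F)\bs N(\BA)$, and inserting the Fourier expansions $\phi_1(g)=\sum_{\alpha\in F^\times}W_{\phi_1}(\diag(\alpha,1)g)$ and $\phi_2(g)=\sum_{\beta\in F^\times}W_{\phi_2}(\diag(\beta,1)g)$ (the first with respect to $\psi$, the second with respect to $\psi^{-1}$), the $N$-integration annihilates all terms with $\alpha\neq\beta$ and the surviving sum over $\alpha$ unfolds the quotient by $T(F)/Z(F)$, so that
\[
I(s)=\int_{Z(\BA)N(\BA)\bs\GL_2(\BA)}W_{\phi_1}(g)\,W_{\phi_2}(g)\,f_s(g)\,dg=\prod_v Z_v(s),\quad Z_v(s)=\int_{Z(F_v)N(F_v)\bs\GL_2(F_v)}W_{1,v}\,W_{2,v}\,f_{v,s}\,dg_v,
\]
where $W_{\phi_i}=\otimes_v W_{i,v}$. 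For all but finitely many $v$ the data is unramified and the Casselman--Shalika formula gives $Z_v(s)=L(s,\sigma_v\times\wt\sigma_v)L(2s,1_v)^{-1}$; since $\prod_v L(s,\sigma_v\times\wt\sigma_v)=L(s,\sigma\times\wt\sigma)=L(s,1_F)L(s,\sigma,\ad)$ and $\prod_v L(2s,1_v)=L(2s,1_F)$, the Euler product inherits a simple pole at $s=1$ from $L(s,1_F)$, with
\[
\Res_{s=1}\prod_v Z_v(s)=\frac{\Res_{s=1}L(s,1_F)\cdot L(1,\sigma,\ad)}{L(2,1_F)}\cdot\prod_{v\in S}\frac{Z_v(1)\,L(2,1_v)}{L(1,\sigma_v\times\wt\sigma_v)},
\]
$S$ being the finite set of places — the archimedean ones together with the finite places where $\sigma$, $\phi_1$ or $\phi_2$ ramifies — outside which the factor is $1$.

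The only nonformal local input is the identity $Z_v(1)=\pair{W_{1,v},W_{2,v}}$ at every $v$. By the Iwasawa decomposition $\GL_2(F_v)=N(F_v)T(F_v)K_v$, and because at $s=1$ the power of $|a|$ produced by the modulus character cancels,
\[
Z_v(1)=\int_{K_v}f_{v,1}(k)\left(\int_{F_v^\times}W_{1,v}(\diag(a,1)k)\,W_{2,v}(\diag(a,1)k)\,d^\times a\right)dk.
\]
The inner integral equals $\pair{\sigma_v(k)W_{1,v},\wt\sigma_v(k)W_{2,v}}=\pair{W_{1,v},W_{2,v}}$ by the $\GL_2(F_v)$-invariance of the Whittaker pairing recalled just before the proposition, so it pulls out of the $K_v$-integral and $Z_v(1)=\pair{W_{1,v},W_{2,v}}\cdot\int_{K_v}f_{v,1}(k)\,dk$. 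That the residual constant $\int_{K_v}f_{v,1}(k)\,dk$ equals $1$ is forced at unramified $v$ by comparison with the Casselman--Shalika value $\pair{W^0_v,\wt W^0_v}=L(1,\sigma_v\times\wt\sigma_v)L(2,1_v)^{-1}$, and at the remaining places one checks it directly from the choice of section and the self-dual measures attached to $\psi_v$. Granting this, $\dfrac{Z_v(1)\,L(2,1_v)}{L(1,\sigma_v\times\wt\sigma_v)}=\dfrac{\pair{W_{1,v},W_{2,v}}}{L(1,\sigma_v,\ad)L(1,1_v)L(2,1_v)^{-1}}=\alpha_v(W_{1,v},W_{2,v})$, which is also $1$ for $v\notin S$.

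Equating the two expressions for $\Res_{s=1}I(s)$ and dividing by the Eisenstein residue constant then gives $\pair{\phi_1,\phi_2}_\Pet=2\,L(1,\sigma,\ad)L(2,1_F)^{-1}\prod_v\alpha_v(W_{1,v},W_{2,v})$. I expect the real obstacle to lie not in any of these steps — the unfolding and the invariance argument are routine — but in the normalization accounting needed to make the numerical constant exact: pinning down the Eisenstein residue (including the factor making $\Vol(Z(\BA)\GL_2(F)\bs\GL_2(\BA))=2$), verifying $\int_{K_v}f_{v,1}=1$ place by place, and tracking the absolute discriminant $|D_F|$, the local conductors of $\psi_v$, and the completed-versus-local $L$-factors, which must conspire to yield precisely the factor $2\,L(2,1_F)^{-1}$ in front and the exact denominator $L(1,\sigma_v,\ad)L(1,1_v)L(2,1_v)^{-1}$ inside $\alpha_v$.
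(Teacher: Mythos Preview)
Your approach is essentially the same Rankin--Selberg unfolding as the paper's, and the outline is correct. The one genuine difference is in the choice of Eisenstein section: the paper works with the Godement--Jacquet section $F(s,g,\Phi)=|\det g|^s\int_{\BA^\times}\Phi([0,b]g)|b|^{2s}d^\times b$ attached to a Schwartz function $\Phi\in\CS(\BA^2)$, whereas you fix a spherical section at every place. The point of the Schwartz-function construction is precisely to sidestep the normalization accounting you flag at the end: the residue of the Eisenstein series is $\tfrac{1}{2}\wh{\Phi}(0)\Res_{s=1}L(s,1_F)$ (via Poisson summation), and the local identity at $s=1$ is
\[
Z(1,W_{1,v},W_{2,v},\Phi_v)=\pair{W_{1,v},W_{2,v}}\cdot\iint_{F_v^\times\times U_v}\Phi_v([0,b]k)|b|^2\,d^\times b\,dk=\pair{W_{1,v},W_{2,v}}\cdot\wh{\Phi}_v(0),
\]
so the auxiliary factor $\wh{\Phi}(0)=\prod_v\wh{\Phi}_v(0)$ cancels \emph{exactly} between the two sides without ever needing $\Vol(K_v)$ or the conductor of $\psi_v$. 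In your setup, the analogous auxiliary factor is $\int_{K_v}f_{v,1}(k)\,dk=\Vol(K_v)$, which in the paper's self-dual measures equals $L(2,1_v)^{-1}|\delta_v|^{1/2}$ at finite $v$ (not $1$), and these powers of the different must then be matched against shifts in the unramified Whittaker pairing and the global Eisenstein residue. This is doable, but it is exactly the bookkeeping you anticipated; the paper's device eliminates it.
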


\begin{proof}We may assume that the cuspidal automorphic representation $\sigma$ is also unitary and identify $\wt{\sigma}$ with $\ov{\sigma}$.  Let $G=\GL_2$ over $F$, $P$ the parabolic subgroup of upper triangle matrices in $G$, and let $U=\prod_v U_v$ be a maximal compact subgroup of $G(\BA)$.  For any place $v$ of $F$, with respect to the Iwasawa
decomposition of $G(F_v)$,
$$g=a\matrixx{1}{x}{}{1}\matrixx{1}{}{}{b} k\in
G(F_v),\qquad  a, b\in F_v^\times, x\in F_v, k\in U_v,$$
choose the measure $dk$ on $U_v$ such that  $dg=|b|dx d^\times a d^\times b dk$ is the fixed local Haar measure on $G(F_v)$. Note that for $v$  non-archimedean, $U_v$ has volume $L(2, 1_v)^{-1}|\delta_v|^{1/2}$ with respect to $dk$ and has volume $L(2, 1_v)^{-1}|\delta_v|^2$ with respect to  the fixed measure on $G(F_v)$; for $v$ archimedean, $U_v$ has volume $L(2, 1_v)^{-1}$ with respect to $dk$.

By Lemma 2.3. in \cite{JC}, for any Bruhat-Schwartz function $\Phi_v\in \CS(F_v^2)$, we have
$$\int_{F_v^\times \times U_v} \Phi([0, b]k) |b|^2 d^\times b dk
=\wh{\Phi}_v (0),$$where $\wh{\Phi}_v$ is the Fourier transformation of $\Phi_v$ and $\wh{\Phi}_v(0)$ is independent of the choice of the additive character $\psi_v$. For any $\Phi\in \CS(\BA^2)$, let
$$F(s, g, \Phi)=|\det g|^s \int_{\BA^\times} \Phi([0, b]g)|b|^{2s}
d^\times b,$$and define the Eisenstein series
$$E(s, g, \Phi):=\sum_{\gamma \in P(F)\bs G(F)} F(s, \gamma g,
\Phi), \quad \Re(s)\gg 0.$$By Possion summation formula,
$$\begin{aligned}
E(s, g, \Phi)=&|\det g|^s \int_{F^\times \bs \BA^\times}
  \left(\sum_{\xi\in
F^2\setminus \{0\}} \Phi(a\xi g)\right) |a|^{2s}d^\times a\\
=&|\det g|^s\int_{|a|\geq 1}
\left(\sum_{\xi\in F^2\setminus \{0\}} \Phi(a\xi g)\right) |a|^{2s}d^\times a\\
&+|\det g|^{s-1} \int_{|a|\geq 1}\left(\sum_{\xi\in F^2\setminus
\{0\}} \wh{\Phi}(g^{-1}\xi {}^t a)\right) |a|^{2-2s}d^\times
a\\
&+|\det g|^{s-1}\wh{\Phi}(0)\int_{|a|\leq 1} |a|^{2s-2} d^\times a\\
&-|\det g|^s\Phi(0)\int_{|a|\leq 1} |a|^{2s}d^\times a.
\end{aligned}$$
It shows that $E(s, g, \Phi)$ has meromorphic continuation to the whole $s$-plane,  has
only possible poles at $s=0$ and $1$,  and its residue at $s=1$ is equal to
$$\Res_{s=1}E(s, g, \Phi)=\wh{\Phi}(0) \lim_{s\ra 1} (s-1)\int_{|a|\leq 1}
|a|^{2s-2} d^\times a=\displaystyle{\frac{\wh{\Phi}(0)}{2}\Res_{s=1} L(s, 1_F)},$$which is independent of $g$.  By unfolding the Eisenstein series and
Fourier expansions of $\phi_i$, the integral
$$\begin{aligned}
Z(s, W_{\phi_1}, W_{\phi_2}, \Phi)&=Z(s, \phi_1, \phi_2, \Phi):=
\int_{[Z\bs G]} \phi_1(g) \phi_2(g) E(s, g, \Phi)dg\\
&=\int_{N(\BA)\bs G(\BA)} |\det g|^s W_{\phi_1}(g) W_{\phi_2}(g)\Phi([0, 1]g)
dg \end{aligned}$$
has an Euler product if $\Phi\in S(\BA^2)$ is a pure tensor. For each place $v$ of $F$
and $\Phi_v\in S(F_v^2)$, denote
$$Z(s, W_{1, v}, W_{2, v}, \Phi_v)
=\int_{N(F_v)\bs G(F_v)} |\det g|^s W_{1, v}(g) W_{2, v}(g)\Phi_v([0,
1]g) dg,$$which has meromorphic continuation to the whole $s$-plane,
and moreover, for $v\nmid \infty$, the fractional ideal of
$\BC[q_v^s, q_v^{-s}]$ of all $Z(s, W_{1, v}, W_{2, v}, \Phi_v)$
with $W_{1, v}\in \CW(\sigma_v, \psi_v)$, $W_{2, v}\in \CW(\wt{\sigma}_v, \psi_v^{-1})$,  and $\Phi_v \in \CS(F_v^2)$ is
generated by $L(s, \sigma_v\times \wt{\sigma}_v)$. It is also known that for each $v$,
$$Z(1, W_{1, v}, W_{2, v}, \Phi_v)=\int_{F^\times} W_{1, v}\matrixx{a}{}{}{1}W_{2,
v}\matrixx{a}{}{}{1}d^\times a\cdot \iint_{F_v^\times\times U_v}
\Phi_v([0, b]k) |b|^2 d^\times b dk,$$ with Haar measures
 chosen above.   Let
$\Phi=\otimes_v \Phi_v\in \CS(\BA^2)$ be a pure tensor such that $\wh{\Phi}(0)\neq 0$  and take residue at $s=1$ on the two sides of
$$Z(s, \phi_1, \phi_2, \Phi)=\prod_v Z(s, W_{1, v}, W_{2, v}, \Phi_v),$$
we have
$$\pair{\phi_1, \phi_2}_\Pet \Res_{s=1} E(s, g, \Phi)=\Res_{s=1}L(s, \sigma\times
\wt{\sigma})\wh{\Phi}(0) \prod_v\frac{\pair{W_{1, v}, W_{2, v}}}{L(1,
\sigma_v \times \wt{\sigma}_v)},$$or
$$\frac{L(1, \sigma, \ad)}{\pair{\phi_1, \phi_2}_\Pet}=\frac{1}{2}\prod_v
\frac{L(1, \sigma_v, \ad)L(1, 1_{F_v})}{\pair{W_{1, v}, W_{2, v}}}.$$
The formula in the Proposition follows.

\end{proof}

\subsection{$U$-level Pairing}

\begin{lem}\label{volume} Let $B$ be a quaternion algebra  over a number field $F$ and denote  by  $r, s, t$  integers such that $B\otimes_\BQ \BR\cong \BH^r\times M_2(\BR)^s \times M_2(\BC)^t$. For $U\subset \wh{B}^\times$ an open compact subgroup,  the volume of $X_U$, defined after Definition \ref{test space},  is given by
$$\Vol(X_U)=2(4\pi^2)^{-d} \# (\BA_f^\times/F^\times U_Z)\cdot \frac{\Vol(U_Z)}{\Vol(U)}.$$
where $U_Z=U\cap \wh{F}^\times$ and the volumes $\Vol(U_Z)$ and $\Vol(U)$ are with respect to Tamagawa measure so that $\Vol(\GL_2(\CO_v))=L(2, 1_v)^{-1} \Vol(\CO_v)^4$, and  $\Vol(B_v^\times)=L(2, 1_v)^{-1}\Vol(\CO_v)^4 (q_v-1)^{-1}$ for $B_v$ division. In particular, if $U$ contains $\wh{\CO}^\times$, then
$$\Vol(X_U)=2(4\pi^2)^{-d} |D_F|^{-1/2}\cdot h_F \cdot \Vol(U)^{-1},$$
where $h_F$ is the class number of $F$.
\end{lem}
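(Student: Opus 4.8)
The plan is to realize $X_U$ as an adelic double-coset space and to compare the measure defining $\Vol(X_U)$ with the Tamagawa measure on $B_\BA^\times$, using as global input the normalization $\Vol(B^\times\BA^\times\bs B_\BA^\times)=2$ already fixed in the paper. Write $B_\BA^\times=B_\infty^\times\times\wh B^\times$ with $B_\infty^\times=\prod_{v\mid\infty}B_v^\times$, and set $\CK_\infty=\prod_{v\mid\infty}\CK_v$, where $\CK_v=B_v^\times$ if $B_v$ is a division algebra and $\CK_v$ is the preimage in $B_v^\times$ of a maximal compact subgroup of $B_v^\times/F_v^\times$ (so $\CK_v\cong\BR^\times\SO_2(\BR)$ or $\BC^\times\SU(2)$) if $B_v$ is split; in all cases $\CK_v$ is connected and contains $F_v^\times$. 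First I would unwind the two cases of the definition of $\Vol(X_U)$ — $s+t>0$ and the totally definite case $s+t=0$ — to see that $X_U$ is, up to a factor $2^s$ coming from the $s$ split real places (where $X_U$ uses the single half-plane $\CH_2$ together with the totally-positive-norm group $B_+^\times$, in place of $B_v^\times/F_v^\times\CK_v$ and $B^\times$), identified with $B_+^\times\bs B_\BA^\times/(\CK_\infty U)$, the disjoint union over $i$ matching $\wh B^\times=\bigsqcup_i B_+^\times g_iU$. Then I would check that the measures $dx\,dy/(4\pi y^2)$ on $\CH_2$, $dx\,dy\,dv/(\pi^2 v^3)$ on $\CH_3$, and the Mass-counting normalization when $s=t=0$, all come from a single measure on $X_U$ that descends from a constant multiple $c\cdot d^\times g$ of the measure $d^\times g=\prod_v d^\times g_v$ on $B_\BA^\times$ fixed earlier, divided by the fixed Haar measures on $\CK_\infty$ and on $U$, where $c=\prod_{v\mid\infty}c_v$ is purely archimedean.

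The constant $c$ would be computed place by place at infinity: at each $v\mid\infty$ one compares the hyperbolic (resp.\ counting) measure with the descent of $d^\times g_v$, and the constants $4\pi,\pi^2$ in the hyperbolic measures are chosen precisely so that $c_v\cdot\Vol(\CK_v/F_v^\times)^{-1}=(4\pi^2)^{-[F_v:\BR]}$ for every infinite place, hence $\prod_{v\mid\infty}c_v\Vol(\CK_v/F_v^\times)^{-1}=(4\pi^2)^{-d}$ since $d=\sum_{v\mid\infty}[F_v:\BR]$. The ingredients are the descended Haar volumes of $\SO_2(\BR)/\{\pm1\}$, $\BH^\times/\BR^\times\cong\SO(3)$ and $\SU(2)$, and the Jacobians relating Iwasawa/Cartan coordinates on $B_v^\times$ to the hyperbolic coordinates on $\CH_2$ and $\CH_3$ (cf.\ \cite{Vig}); e.g.\ at a split real place this reduces to the classical facts that the invariant measure on $\CH_2$ is $A\,dx\,dy/y^2$ and $\Vol(\SO_2(\BR)/\{\pm1\})=B$ with $AB=\pi$, so that $(4\pi A)^{-1}B^{-1}=(4\pi^2)^{-1}$. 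No non-archimedean place contributes anything further beyond the factor $\prod_{v\nmid\infty}\Vol(U_v)^{-1}=\Vol(U)^{-1}$ from integrating over $B_\BA^\times/U$, the volumes $\Vol(U_v)$ for $d^\times g_v$ being exactly the normalizations asserted in the statement once one unwinds the definition of $d^\times x$ on $B_v^\times$ (giving $\Vol(\GL_2(\CO_v))=L(2,1_v)^{-1}\Vol(\CO_v)^4$ in the split case, and the stated value for $B_v$ division).

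Next I would carry out the centre bookkeeping, passing from $\Vol\big(B_+^\times\bs B_\BA^\times/\CK_\infty U\big)$ (for $d^\times g$) to $\Vol(B^\times\BA^\times\bs B_\BA^\times)=2$. Since $F^\times\subset B_+^\times$, $F_\infty^\times\subset\CK_\infty$ and $U_Z:=U\cap\wh F^\times\subset U$, the centre $\BA^\times$ acts on $B_\BA^\times/\CK_\infty U$ through $\wh F^\times/U_Z$, and the map from $X_U$ to the corresponding quotient of $B_\BA^\times/\BA^\times$ has fibres a torsor under the group $\BA_f^\times/F^\times U_Z$, which is finite because $F^\times U_Z$ is open of finite index in $\BA_f^\times$; the quotient-measure identity $\Vol(U/U_Z)=\Vol(U)/\Vol(U_Z)$ produces the factor $\Vol(U_Z)$; and the $2^s$ discrepancy between $B_+^\times$ and $B^\times$ cancels the factor $2^s$ from the first paragraph — here one invokes the Hasse--Schilling norm theorem to handle positivity of reduced norms at the split real places. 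Combining these with $\Vol(B^\times\BA^\times\bs B_\BA^\times)=2$ gives
\[
\Vol(X_U)=2\,(4\pi^2)^{-d}\,\#\!\big(\BA_f^\times/F^\times U_Z\big)\cdot\frac{\Vol(U_Z)}{\Vol(U)}.
\]
Finally, if $\wh\CO^\times\subset U$ then $U_Z=U\cap\wh F^\times$ is a compact subgroup of $\wh F^\times$ containing the maximal compact $\wh\CO^\times$, hence equals $\wh\CO^\times$; so $\#(\BA_f^\times/F^\times\wh\CO^\times)=h_F$ and $\Vol(\wh\CO^\times)=\prod_{v\nmid\infty}\Vol(\CO_v^\times)=\prod_{v\nmid\infty}|\delta|_v^{1/2}=|D_F|^{-1/2}$, each local factor being the self-dual additive volume of $\CO_v$, which gives the second displayed formula.

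I expect the main obstacle to be the archimedean comparison of the second step — matching the specific hyperbolic normalizations on $\CH_2$ and $\CH_3$ (and the counting normalization at the ramified real places) with the descended Tamagawa measures so that the local constant is exactly $(4\pi^2)^{-[F_v:\BR]}$ — together with the connected-component and positivity bookkeeping of the third step, where one must track carefully that the quotient is by $B_+^\times$ rather than $B^\times$ and verify, via the norm theorem, that the powers of $2$ cancel. The global input $\Vol(B^\times\BA^\times\bs B_\BA^\times)=2$ and the non-archimedean volume computations are routine.
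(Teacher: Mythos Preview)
Your approach is correct in outline and leads to the same formula, but it is genuinely different from the paper's proof in the case $s+t>0$. The paper does \emph{not} work with $B_\BA^\times/\BA^\times$ and the Tamagawa number~$2$ directly in that case; instead it passes to the norm-one group $B^1$, uses strong approximation for $B^1$ (which requires $s+t>0$) together with $\Vol(B^1\bs B_\BA^1)=1$, computes the volume of $\Gamma\bs B^1_\Sigma$ via explicit Iwasawa coordinates on $\SL_2(\BR)$ and $\SL_2(\BC)$, and only then climbs back up to $B_+^\times$ through a chain of finite-index comparisons (the indices $[\mu_U':\mu_U^2]$, $[F^\times:F_+^\times]=2^{r+s}$, $[\mu_U:\mu_U^2]=2^{r+s+t-1}w_U$, and the degree of the map $(B^1\cap U^1)\bs(\CH_2^s\times\CH_3^t)\to(B_+^\times\cap U)\bs(\CH_2^s\times\CH_3^t)$). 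Only in the totally definite case $s=t=0$ does the paper argue as you do, straight from $\Vol(B^\times\BA^\times\bs B_\BA^\times)=2$ and the value $\Vol(B_v^\times/F_v^\times)=4\pi^2$ at infinite places.

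What your route buys is uniformity: one global input and one archimedean computation per place, without ever invoking $B^1$ or strong approximation. What it costs is that the ``centre and positivity bookkeeping'' you flag as the obstacle is exactly as delicate as the paper's index chase --- your torsor claim for $\BA_f^\times/F^\times U_Z$ and the asserted cancellation of the two factors $2^s$ hide precisely the same unit-group and sign computations that the paper spells out via $\mu_U,\mu_U',w_U$. In particular, the freeness of the $\BA_f^\times/F^\times U_Z$-action on fibres and the compatibility with the $B^\times/B_+^\times$-action are not automatic and need the Hasse--Schilling norm theorem plus a unit argument, just as in the paper. So your plan is sound, but when you fill it in you will find yourself doing a computation equivalent in length and subtlety to the paper's.
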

\begin{proof}(See also \cite{YZZ} for the case $s=1$ and $t=0$). Denote by $B^1 := \left\{b \in B^\times | q(b) = 1  \right\}$ with $q$ the reduced norm on $B$. For each place
$v$ of $F$, we have the exact sequence
$$1\lra B^1_v \lra B_v^\times \lra q(B_v^\times)\lra 1,$$
and  define the Haar measure $dh_v$ on $B_v^1$ such that it makes the Haar measure on $q(B_v^\times)$, obtained by the restriction of the Haar measure on $F_v^\times$, equal to the
quotient of the Haar measure on $B_v^\times$ by $dh_v$. The product of these local measures give the Tamagawa measure on
$B^1_\BA$ so that $\Vol(B^1\bs B^1_\BA)=1$.
This follows from the fact that the Tamagawa numbers of $B^1$ and $B^\times$ are $1$ and $2$, respectively. Assume that $B\otimes_\BQ \BR=\BH^r\times M_2(\BR)^s\times M_2(\BC)^t$. We assume that $s+t>0$ first and let $\Sigma \subset \infty$ be the subset of infinte places of $F$ where $B$ splits. By the strong approximation theorem, $B_\BA^1=B^1 B_\infty^1 U^1$, where $U^1=U\cap B^1_{\BA_f}$  is an open compact subgroup of $B^1_{\BA_f}$. It follows that
$$B^1\bs B_\BA^1=B^1\bs B^1 B_\infty^1 U^1=(\Gamma \bs B_{\Sigma}^1) B_\infty^{1, \Sigma} U^1$$
where $\Gamma=B^1\cap U^1$ and we identify $\Gamma \bs B^1_{\Sigma}$ with the fundamental domain of this quotient.

For a real place $v$ of $F$, $B_v^1\cong \SL_2(\BR)$. By the Iwasawa decomposition, any element is uniquely of the form
$$\matrixx{1}{x}{}{1} \matrixx{y^{1/2}}{}{}{y^{-1/2}} \matrixx{\cos\theta}{\sin\theta}{-\sin\theta}{\cos\theta}, \qquad x\in \BR, y\in \BR_+, \theta\in [0, 2\pi).$$
The measure on $B_v^1$ is $dx dy d\theta/2y^2$ with $dx dy$ the usual Lebesgue and $\theta$ has volume $2\pi$. For a complex place $v$ of $F$, $B_v^1\cong \SL_2(\BC)$. By the Iwasawa decomposition, any element in $\SL_2(\BC)$ is uniquely of form
$$\matrixx{1}{z}{}{1}\matrixx{v^{1/2}}{}{}{v^{-1/2}}u, \qquad z\in \BC, v\in \BR_+, u\in \SU_2,$$
The measure on $B_v^1$ is $dxdydv du/v^3$ with $z=x+yi$, $dx, dy, dv$ the usual Lebesgue measure, and $du$ has volume $8\pi^2$ (see \cite{Vig}). It follows that
$$\Vol(\Gamma\bs B^1_\Sigma)=2^{-t}(4\pi^2)^{s+2t} w_U^{-1}\cdot \Vol\left(\Gamma\bs (\CH_2^s\times \CH_3^t), \ \wedge \frac{dxdy}{4\pi y^2} \wedge  \frac{dx dy dv}{\pi^2v^3}\right),$$where $w_U=\# \{\pm 1\}\cap U$.
On the other hand, for any infinite place $v\notin \Sigma$, $\Vol(B_v^1)=4\pi^2$. It follows that
$$w_U^{-1}\cdot 2^{-t}(4\pi^2)^d \cdot \Vol\left(\Gamma\bs (\CH_2^s\times \CH_3^t), \ \wedge \frac{dxdy}{4\pi y^2} \wedge  \frac{dx dy dv}{\pi^2 v^3}\right)\cdot \Vol(U^1)=1,$$
where $d=[F:\BQ]$. Let $B_+^\times\subset B^\times$ be the subgroup of elements whose norms are positive at all real places. Now consider the natural map
$$(B^1\cap U^1) \bs (\CH_2^s\times \CH_3^t)\lra  (B_+^\times \cap U) \bs (\CH_2^s\times \CH_3^t),$$
whose degree is just
$$[(B_+^\times \cap U): (B^1\cap u^1)\mu_U]=[\det (B_+^\times \cap U): \mu_U^2]=[\mu_U': \mu_U^2].$$
Here $\mu_U=F^\times \cap U$  and $\mu_U'=F_+^\times\cap \det U$, subgroups of $\CO_F^\times$ with finite index, . It follows that
$$\begin{aligned}
\Vol(X_U)=& \Vol((B_+^\times\cap U)\bs (\CH_2^s\times \CH_3^t)\cdot \#(F_+^\times \bs \wh{F}^\times /\det U)\\
&=\frac{2^tw_U}{(4\pi^2)^{d} \cdot \Vol(U^1)\cdot [\mu_U': \mu_U^2]} \cdot \#(F_+^\times \bs \wh{F}^\times /\det U).
\end{aligned}.$$
Note that
$$\frac{\#(\wh{F}^\times/F_+^\times \det(U))}{\#(\wh{F}^\times \bs
F^\times U_Z))} =[F^\times U_Z: F_+^\times \det U]=[F^\times:
F_+^\times]\frac{\Vol(U_Z)}{\Vol(\det (U))} [\mu_U':\mu_U].$$Note that $[F^\times: F_+^\times]=2^{r+s}$,
$[\mu_U:\mu_U^2]=2^{r+s+t-1}w_U$, and $\Vol(U)=\Vol(U^1)\Vol(\det (U))$, we have
$$\Vol(X_U)=2(4\pi^2)^{-d} \# (\wh{F}^\times/F^\times U_Z)\cdot \frac{\Vol(U_Z)}{\Vol(U)}.$$

Now assume that $s=t=0$. Note that the Tamagawa number of $B^\times$ is $2$, $\Vol(B_v^\times/F_v^\times)=4\pi^2$ for any $v|\infty$,  and the decomposition
$$B^\times \BA^\times \bs B^\times_\BA =F_\infty^\times \bs B_\infty^\times \times B^\times \wh{F}^\times \bs \wh{B}^\times.$$
It follows that $\Vol(B^\times \wh{F}^\times \bs \wh{B}^\times)=2 (4\pi^2)^{-d}$.  Let $\gamma_1, \cdots, \gamma_h$ be a complete set of representatives in $\wh{B}^\times$ of the coset $B^\times \bs \wh{B}^\times/U$. Consider the natural map
$$B^\times \bs B^\times \gamma_i U \lra B^\times \wh{F}^\times \bs B^\times \wh{F}^\times \gamma_iU,$$
 whose degree is $\#\wh{F}^\times/F^\times U_Z$. Since  $$\Vol(B^\times \wh{F}^\times \bs B^\times \wh{F}^\times \gamma_iU)=\Vol\left(\frac{\gamma_i(U/U_Z)\gamma_i^{-1}}{(B^\times \cap \gamma_i U \gamma_i^{-1})/\mu_Z } \right)=\frac{\Vol(U)/\Vol(U_Z)}{\# (B^\times \cap \gamma_i U \gamma_i^{-1})/\mu_Z }.$$
Thus
$$2(4\pi^2)^{-d}=\Vol(B^\times \wh{F}^\times \bs \wh{B}^\times)=(\#\wh{F}^\times/F^\times U_Z)^{-1}\cdot \frac{\Vol(U)}{\Vol(U_Z)}\cdot \sum_{i=1}^h \frac{1}{\# (B^\times \cap \gamma_i U \gamma_i^{-1})/\mu_Z}.$$

\end{proof}

\subsection{$c_1$-level Periods}
Now take $f_1\in V(\pi, \chi). f_2\in V(\wt{\pi}, \chi^{-1})$ to be non-zero test vectors defined before. Let $\sigma=\pi^\JL$, take $\phi_1\in \sigma$ and $\phi_2\in \wt{\sigma}$ to be normalized new vectors. The $c_1$-level periods  $P^0_\chi(f_1), P^0_{\chi^{-1}}(f_2)$ are related to the periods in Waldspurger's formula by the following  lemma:

\begin{lem}\label{period} Let $b\subset \CO$ be a non-zero ideal of $F$ and denote by $\Pic_{K/F}(\CO_b)$ the group $\wh{K}^\times/K^\times \wh{F}^\times \wh{\CO}_b^\times$. Then there is a relative class number formula:
$$L^{(b)}(1, \eta)\cdot \|D_{K/F}b^2\delta\|^{1/2}\cdot 2^{-r_{K/F}}=\frac{\# \Pic_{K/F}(\CO_b)\cdot R_b}{\#\kappa_b\cdot w_b},$$
where $r_{K/F}=\rank \CO_K^\times -\rank \CO^\times$, $w_b=[\CO^\times_{b, \tor}: \CO^\times_\tor]$, $R_b$ is the quotient of the regulator of $\CO_b^\times$ by the one of $\CO^\times$, and $\kappa_b$ is  the kernel of the natural morphism from $\Pic(\CO)$ to $\Pic(\CO_b)$.   Define a constant $\nu_b: =2^{-r_{K/F}} R_b^{-1}\cdot \#\kappa_b w_b$.
Then it follows that
$$P_\chi(f)=2L_{c_1}(1, \eta) \|Dc_1^2\delta\|^{-1/2} \nu_{c_1}^{-1} \cdot P_\chi^0(f).$$
\end{lem}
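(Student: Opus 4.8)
The plan is to prove the two assertions in turn. For the relative class number formula I would reduce the general order $\CO_b$ to the maximal case $b=\CO$, and handle the latter by the classical residue argument: the (archimedean-completed) zeta function of $K$ factors as $\zeta_K=\zeta_F\cdot L(\,\cdot\,,\eta)$, where the $\Gamma$-factor of $\eta$ at a place $v$ of $F$ is exactly the ratio of the archimedean factors of $\zeta_K$ and $\zeta_F$ above $v$; taking residues at $s=1$ and inserting Dirichlet's analytic class number formula for $K$ and for $F$ gives
\[L(1,\eta)=2^{r_{K/F}}\cdot\frac{h_K\,\rho_K\,\mu_F}{h_F\,\rho_F\,\mu_K}\cdot\Big(\tfrac{|D_F|}{|D_K|}\Big)^{1/2},\]
where $\rho_K,\rho_F$ are the absolute regulators, $\mu_K,\mu_F$ the orders of the groups of roots of unity, and the power $2^{r_{K/F}}$ comes precisely from comparing the $\Gamma$-factors (the rank difference $r_{K/F}=\rank\CO_K^\times-\rank\CO^\times$ records how many $\Gamma_\BR$'s get replaced by $\Gamma_\BC$'s, each contributing one factor $2$). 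Since $|D_K|=|D_F|^2\|D\|$, the discriminant factor is $\|D\delta\|^{-1/2}$; rewriting $h_K\rho_K\mu_F/(h_F\rho_F\mu_K)$ in terms of $\#\Pic_{K/F}(\CO_K)$, $R_1=\rho_K/\rho_F$, $w_1=\mu_K/\mu_F$ and $\kappa_1$ via the exact sequence $\Pic(\CO)\to\Pic(\CO_K)\to\Pic_{K/F}(\CO_K)\to 0$ yields the formula for $b=\CO$. For general $b$ I would then use the standard index formula $\#\Pic(\CO_b)=\#\Pic(\CO_K)\cdot\|b\|\,[\CO_K^\times:\CO_b^\times]^{-1}\prod_{v\mid b}(1-\eta_v(\varpi_v)q_v^{-1})$ together with $\rho(\CO_b^\times)/\rho_K=[\CO_K^\times:\CO_b^\times]\cdot\mu(\CO_b)/\mu_K$: the product $\prod_{v\mid b}(1-\eta_v(\varpi_v)q_v^{-1})=\prod_{v\mid b}L_v(1,\eta)^{-1}$ converts $L(1,\eta)$ into $L^{(b)}(1,\eta)$, the factor $\|b\|=\|b^2\|^{1/2}$ enlarges $\|D\delta\|^{1/2}$ to $\|Db^2\delta\|^{1/2}$, and the regulator/torsion/$\kappa$ contributions reorganize into $R_b$, $w_b$, $\#\kappa_b$ exactly as stated. (In the main case — $F$ totally real, $K$ totally imaginary — one has $r_{K/F}=0$ and the whole computation is elementary.)

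For the second assertion, set $G:=K_\BA^\times/\BA^\times K^\times$, a compact group of total volume $2L(1,\eta)$ for the measure fixed in the conventions above, and consider the continuous surjection $G\twoheadrightarrow\Pic_{K/F}(\CO_{c_1})=\wh K^\times/K^\times\wh F^\times\wh\CO_{c_1}^\times$ sending $x=x_f x_\infty$ to the class of $x_f$. Its kernel $H$ is the closure of the image of $\wh\CO_{c_1}^\times K_\infty^\times$, and since $\Pic_{K/F}(\CO_{c_1})$ is finite, $\Vol(H)=2L(1,\eta)/\#\Pic_{K/F}(\CO_{c_1})$. Next I would check that, for $f\in V(\pi,\chi)$, the function $t\mapsto f(t)\chi(t)$ on $K_\BA^\times$ is invariant under $\wh\CO_{c_1}^\times$: at a finite place $v\nmid(c_1,N)$ this follows from $\CO_{c_1,v}^\times=R_v^\times\cap K_v^\times\subset U_v$, the definition of $V(\pi,\chi)$, the relation $\omega\chi|_{\BA^\times}=1$, and $\chi_v|_{\CO_{c_1,v}^\times}=1$ (which holds since $\ord_v(c_1)\in\{0,\ord_v(c)\}$ and $\chi$ has conductor $c$); at the finitely many places $v\mid(c_1,N)$ (where $\BB$ is split and $R_v$ is the explicit intersection of two maximal orders) it follows from admissibility conditions (1)--(2) and the $\chi_v^{-1}$-eigenness under $K_v^\times$ imposed at $v\in\Sigma_1$. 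These same properties, with $K^\times$-, $\BA^\times$- and $K_{\Sigma_1}^\times$-equivariance, also give that $f^0(t)\chi(t)$ descends to $\Pic_{K/F}(\CO_{c_1})$. Because $f\chi$ is $\wh\CO_{c_1}^\times$-invariant, integrating it over a coset $tH$ collapses to the average over the closure $\ov{K_\infty^\times/F_\infty^\times}$ (of total mass one), which by definition equals $f^0(t)\chi(t)$, so $\int_{tH}f(s)\chi(s)\,ds=\Vol(H)\,f^0(t)\chi(t)$; summing over the cosets yields $P_\chi(f)=\Vol(H)\,P^0_\chi(f)$. Finally substitute $\#\Pic_{K/F}(\CO_{c_1})=\nu_{c_1}L^{(c_1)}(1,\eta)\|Dc_1^2\delta\|^{1/2}$ from the class number formula (recall $\nu_{c_1}=2^{-r_{K/F}}R_{c_1}^{-1}\#\kappa_{c_1}w_{c_1}$) and use $L(1,\eta)/L^{(c_1)}(1,\eta)=L_{c_1}(1,\eta)$ to get $P_\chi(f)=2L_{c_1}(1,\eta)\|Dc_1^2\delta\|^{-1/2}\nu_{c_1}^{-1}P^0_\chi(f)$.

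The main obstacle will be the normalization bookkeeping in the class number formula — arranging the archimedean $\Gamma$-factors so that exactly $2^{-r_{K/F}}$ emerges, and treating the split, inert and ramified local indices $[\CO_{K,v}^\times:\CO_{b,v}^\times]$ uniformly so that they reassemble into $L^{(b)}(1,\eta)$ and $\|b^2\|^{1/2}$ — together with the place-by-place verification that $f(t)\chi(t)$ is $\wh\CO_{c_1}^\times$-invariant at the non-maximal places $v\mid(c_1,N)$, which requires using the admissibility of $R_v$ and the precise conductor behaviour of $\chi_v$. Once these local points are in hand, the fibration argument of the second step is a direct unfolding, and it uses no input beyond the conventions and lemmas already established.
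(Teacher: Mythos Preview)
Your approach is essentially the same as the paper's. For the relative class number formula you take residues of $\zeta_K=\zeta_F\cdot L(s,\eta)$ and feed in the analytic class number formula for $K$ and $F$, then pass from $\CO_K$ to $\CO_b$ via the local index $[\wh\CO_K^\times:\wh\CO_b^\times]$; this is exactly what the paper does, only you organise it as ``first $b=\CO$, then reduce'' whereas the paper computes $\#\Pic_{K/F}(\CO_b)$ directly from the two exact sequences
\[1\to\kappa_b\to\Pic(\CO_F)\to\Pic(\CO_b)\to\Pic_{K/F}(\CO_b)\to1,\qquad 1\to\CO_K^\times/\CO_b^\times\to\wh\CO_K^\times/\wh\CO_b^\times\to\Pic(\CO_b)\to\Pic(\CO_K)\to1.\]
For the second assertion the paper gives no details at all (it simply says the formula follows), so your fibration argument over $\Pic_{K/F}(\CO_{c_1})$ is exactly what one has to supply, and it is correct.

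One small slip in your case analysis: places $v\in\Sigma_1$ satisfy $v\mid N$ but $v\nmid c_1$, hence they fall into your case $v\nmid(c_1,N)$, not the case $v\mid(c_1,N)$. At such $v$ one may have $\ord_v(c)>0$, so your claim $\chi_v|_{\CO_{c_1,v}^\times}=1$ fails there; but this is precisely where the $\chi_v^{-1}$-eigen property of $f$ under $K_v^\times$ gives the invariance directly. You invoke that property, just in the wrong branch of the dichotomy; moving it to cover $v\in\Sigma_1$ inside the first case fixes the write-up with no change to the substance.
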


\begin{proof} There are exact sequences
$$1\ra \kappa_b \ra \wh{F}^\times/F^\times \wh{\CO}_F^\times\ra  \wh{K}^\times/K^\times
\wh{\CO}_b^\times \ra \wh{K}^\times/K^\times
\wh{F}^\times \wh{\CO}_b^\times \ra 1,$$
and
$$1\ra \CO_K^\times/\CO_b^\times \ra \wh{\CO}_K^\times/\wh{\CO}_b^\times \ra \wh{K}^\times/K^\times \wh{\CO}_b^\times \ra \wh{K}^\times/K^\times \wh{\CO}_K^\times \ra 1.$$

It follows that
$$\#\Pic_{K/F}(\CO_b)=\# \wh{K}^\times/K^\times \wh{F}^\times \wh{\CO}_b^\times=\frac{h_K}{h_F}\cdot
[\wh{\CO}_K^\times:\wh{\CO}_b^\times]\cdot
[\CO_K^\times:\CO_b^\times]^{-1}\cdot \#\kappa_b,$$ where $h_K=\#\wh{K}^\times/K^\times\wh{\CO}_K^\times$ is the ideal
class number of $K$ and similarly for $h_F$. By class number formula for $F$ and $K$, we have
$$\Res_{s=1}L(s, 1_F)=2^{r_F+1}\frac{R_F h_F}{w_F \sqrt{|D_F|}}, \quad \Res_{s=1}L(s, 1_K)=2^{r_K+1}\frac{R_K h_K}{w_K \sqrt{|D_K|}}$$
where $r_F=\rank\CO_F^\times$, $D_F$ is the discriminant of $F$, $R_F$ is the regulator of $\CO^\times$,  $h_F$ the ideal class number of $F$,  $w_F=\# \CO_{\tor}^\times$, and similar for $r_K, D_K, R_K, h_F$ and $w_K$. Noting that $|D_K|/|D_F|=|D_{K/F}\delta|_\BA^{-1}$ and $[\wh{\CO}_K^\times: \wh{\CO}_b^\times]^{-1}=L_b(1, \eta)|b|$, we have that
$$\begin{aligned}
L(1, \eta)&=\frac{h_K}{h_F}\cdot 2^{r_{K/F}} \frac{R_Kw_K^{-1}}{R_Fw_F^{-1}}\cdot \|D_{K/F} \delta\|^{-1/2}\\
&=\#\Pic_{K/F}(\CO_b)\cdot L_b(1, \eta) \cdot 2^{r_{K/F}}  \cdot [\CO_K^\times: \CO_b^\times]\frac{R_Kw_K^{-1}}{R_Fw_F^{-1}}(\#\kappa_b)^{-1}\cdot \|D_{K/F} b^2\delta\|^{-1/2}.\end{aligned}$$
 The relative class number formula then follows.
\end{proof}

Let $N$ be the conductor of $\sigma=\pi^\JL$, let $U\subset \wh{B}^\times$ be an open compact subgroup, recall
$$\langle f_1, f_2\rangle_U=\frac{\pair{f_1, f_2}_\Pet}{2} \Vol(X_U), \qquad
\langle\phi_1, \phi_2\rangle_{U_0(N)}=\frac{\pair{\phi_1, \phi_2}_\Pet}{2} \Vol(X_{U_0(N)}).$$
Applying Proposition \ref{Petersson}, Lemma \ref{volume}, and Lemma \ref{period},   Waldspurger's formula \eqref{1} implies the following.
\begin{prop}\label{B1} Let $U=\prod_v U_v\subset \wh{B}^\times$ be an open compact subgroup such that $\wh{\CO}^\times \subset U$. Denote by $\gamma_v=\Vol(U_0(N)_v)^{-1}\Vol(U_v)$ for all finite places $v$ and $\gamma_v=1$ for $v|\infty$. Let $\phi_1\in \pi^\JL, \phi_2\in \wt{\pi}^\JL$ be any forms with $\pair{ \phi_1, \phi_2}_{U_0(N)}\neq 0$ and let $\alpha (W_{1, v}, W_{2, v})$ be corresponding local constants defined in Proposition \ref{Petersson}. Let $f_1\in \pi, f_2\in \wt{\pi}$ be any pure tensors with $(f_1, f_2)_\Pet\neq 0$ and $\beta(f_{1, v}, f_{2, v})$ corresponding constants defined in \eqref{1}.  Then we have
\begin{equation}\label{final reduction}
(2L_{c_1}(1, \eta) |Dc_1^2\delta|_\BA^{1/2}\nu_{c_1}^{-1})^2\cdot
\frac{P^0_\chi(f_1) P^0_{\chi^{-1}}(f_2)}{\langle f_1, f_2 \rangle_U}
=\frac{L(1/2, \pi, \chi)}{ \langle \phi_1, \phi_2\rangle_{U_0(N)}}\cdot  \prod_{v} \alpha_v(W_{1, v}, W_{2, v})\beta_v(f_{1, v}, f_{2, v})\gamma_v,\end{equation}
where $\nu_{c_1}$ is defined as in Lemma \ref{period}.
\end{prop}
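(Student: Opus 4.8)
The plan is to substitute the three preparatory results — Waldspurger's period identity \eqref{1}, the Petersson pairing formula (Proposition~\ref{Petersson}), the volume formula (Lemma~\ref{volume}), together with the relative class number computation (Lemma~\ref{period}) — into one another and observe that all the auxiliary factors cancel. First I would apply \eqref{1} to the pure tensors $f_1\in\pi$, $f_2\in\wt{\pi}$:
\[
\frac{P_\chi(f_1)\,P_{\chi^{-1}}(f_2)}{\pair{f_1,f_2}_\Pet}
=\frac{L(1/2,\pi,\chi)}{2\,L(1,\pi,\ad)\,L(2,1_F)^{-1}}\;\prod_v\beta_v(f_{1,v},f_{2,v}).
\]
By Lemma~\ref{period}, and using $\|Dc_1^2\delta\|^{-1/2}=|Dc_1^2\delta|_\BA^{1/2}$, the left numerator equals $\bigl(2L_{c_1}(1,\eta)|Dc_1^2\delta|_\BA^{1/2}\nu_{c_1}^{-1}\bigr)^2\,P^0_\chi(f_1)P^0_{\chi^{-1}}(f_2)$, while by definition $\pair{f_1,f_2}_\Pet=2\,\pair{f_1,f_2}_U/\Vol(X_U)$. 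Inserting these, the identity \eqref{1} becomes
\[
\bigl(2L_{c_1}(1,\eta)|Dc_1^2\delta|_\BA^{1/2}\nu_{c_1}^{-1}\bigr)^2\,\frac{P^0_\chi(f_1)P^0_{\chi^{-1}}(f_2)}{\pair{f_1,f_2}_U}
=\frac{L(1/2,\pi,\chi)}{L(1,\pi,\ad)\,L(2,1_F)^{-1}\,\Vol(X_U)}\prod_v\beta_v(f_{1,v},f_{2,v}),
\]
so the left side of \eqref{final reduction} is now expressed with a single surviving $L$-factor $L(1,\pi,\ad)L(2,1_F)^{-1}$ and the volume $\Vol(X_U)$.

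Next I would bring in the $\GL_2$ side. With $\sigma=\pi^\JL$ and $\phi_1,\phi_2$ pure tensors (so that $W_{\phi_i}=\otimes_v W_{i,v}$), Proposition~\ref{Petersson} gives $\pair{\phi_1,\phi_2}_\Pet=2\,L(1,\sigma,\ad)\,L(2,1_F)^{-1}\prod_v\alpha_v(W_{1,v},W_{2,v})$, hence
\[
\pair{\phi_1,\phi_2}_{U_0(N)}=\tfrac12\pair{\phi_1,\phi_2}_\Pet\,\Vol(X_{U_0(N)})
=L(1,\sigma,\ad)\,L(2,1_F)^{-1}\,\Vol(X_{U_0(N)})\prod_v\alpha_v(W_{1,v},W_{2,v}).
\]
Since $L(1,\pi,\ad)$ is by definition $L(1,\pi^\JL,\ad)=L(1,\sigma,\ad)$, dividing the displayed form of \eqref{1} by this expression makes the adjoint $L$-value, the factor $L(2,1_F)$, and the product $\prod_v\alpha_v$ cancel against the corresponding terms of the right side of \eqref{final reduction}; what remains to reconcile is the identity $\Vol(X_{U_0(N)})/\Vol(X_U)=\prod_v\gamma_v$.

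Finally I would invoke Lemma~\ref{volume}: by hypothesis $\wh{\CO}^\times\subset U$, and of course $\wh{\CO}^\times\subset U_0(N)$, so the simplified form of that lemma gives $\Vol(X_U)=2(4\pi^2)^{-d}|D_F|^{-1/2}h_F\,\Vol(U)^{-1}$ and the same with $U$ replaced by $U_0(N)$; therefore
\[
\frac{\Vol(X_{U_0(N)})}{\Vol(X_U)}=\frac{\Vol(U)}{\Vol(U_0(N))}=\prod_{v\nmid\infty}\frac{\Vol(U_v)}{\Vol(U_0(N)_v)}=\prod_v\gamma_v,
\]
the infinite places contributing $\gamma_v=1$. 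Assembling the three substitutions, the factor $2$, the volume $\Vol(X_U)$, the class number $h_F$, the discriminant $|D_F|$, and $L(1,\pi,\ad)L(2,1_F)^{-1}$ all cancel, leaving precisely \eqref{final reduction}. The argument is pure bookkeeping; the only points needing care are keeping the $B^\times$-side pairing $\pair{\ ,\ }_U$ and the $\GL_2$-side pairing $\pair{\ ,\ }_{U_0(N)}$ distinct, using the identification $L(1,\pi,\ad)=L(1,\sigma,\ad)$, and noticing that the hypothesis $\wh{\CO}^\times\subset U$ is exactly what licenses the simplified volume formula — so I do not expect any genuine obstacle, only vigilance with normalizations.
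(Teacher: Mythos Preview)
Your proof is correct and follows exactly the approach the paper indicates: the paper's own proof is the single sentence ``Applying Proposition~\ref{Petersson}, Lemma~\ref{volume}, and Lemma~\ref{period}, Waldspurger's formula~\eqref{1} implies the following,'' and you have simply carried out that bookkeeping in full, with the correct cancellations. The only minor remark is that your step invoking Proposition~\ref{Petersson} tacitly requires $\phi_1,\phi_2$ to be pure tensors, which is indeed implicit in the statement of Proposition~\ref{B1} (since the local constants $\alpha_v(W_{1,v},W_{2,v})$ only make sense in that case).
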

It is now clear that the explicit Waldspurger formula will follow from the computation  of these local factors. In the next section, we will choose $\phi_1, \phi_2$ to be normalized new vectors in $\pi^\JL$ and $\wt{\pi}^\JL$, respectively, choose non-zero $f_1\in V(\pi, \chi), f_2\in V(\wt{\pi}, \chi)$, and compute the related local factors in \eqref{final reduction}.

We obtain Explicit Gross-Zagier formula from Yuan-Zhang-Zhang's formula via a similar way. Let $F$ be a totally real field and $X$ a Shimura curve over $F$ associated to an incoherent quaternion algebra $\BB$. Let $A$ be an abelian variety over $F$ parametrized by $X$ and let $\pi_A=\Hom_\xi^0(X, A)$ be the associated automorphic representation of $\BB^\times$ over the field $M:=\End^0(A)$ and $\omega$ its central character.  Let $K$ be a totally imaginary quadratic extension over $F$ and $\chi: K_\BA^\times \ra L^\times$ a finite order Hecke character over a finite extension $L$ of $M$ such that $\omega\cdot \chi|_{\BA^\times}=1$ and for all places $v$ of $F$,
$\epsilon(1/2, \pi_A, \chi)=\chi_v\eta_v(-1)\epsilon(\BB_v)$.  Fix an embedding $K_\BA\ra \BB$ and $K_\BA^\times\ra \BB^\times$, let $P\in X^{K^\times}(K^\ab)$ and define
$$P_\chi(f)=\int_{K_\BA^\times/K^\times \BA^\times} f(P)^{\sigma_t}\otimes_M \chi(t) dt \in A(K^\ab)_\BQ \otimes_M L,$$
where we use the Haar measure so that the
totally volume of $K_\BA^\times/K^\times \BA^\times$ is $2L(1, \eta)$, and $\eta$ is the quadratic Hecke character on $\BA^\times$ associated to the extension $K/F$.  We further assume for all non-archimedean places $v$ that the compact subgroup $\CO_{K_v}^\times/\CO_v^\times$ has a volume in $\BQ^\times$ and fix a local invariant pairing $(\ ,\ )_v$ on $\pi_{A, v}\times \pi_{A^\vee, v}$ valued in $M$.  Define $\beta(f_{1, v}, f_{2, v})\in L$, with $(f_{1, v}, f_{2, v})_v\neq 0$, by
$$\beta(f_{1, v}, f_{2, v})=\frac{L(1, \eta_v) L(1, \pi_v, \ad)}{L(1/2, \pi_v, \chi_v) L(2, 1_{F_v})} \int_{K_v^\times/F_v^\times} \frac{(\pi(t_v) f_{1, v}, f_{2, v})_v}{(f_{1, v}, f_{2, v})_v} \chi(t_v) dt_v\in L.$$
where we take an embedding of $L$ into $\BC$ and  the above integral lies in fact in $L$ and does not depend on the embedding.

Then for any pure tensors  $f_1\in \pi_A, f_2\in\pi_{A^\vee}$ with $(f_1, f_2)\neq 0$,  Yuan-Zhang-Zhang obtained the following celebrated formula in \cite{YZZ} as an identity in $L\otimes_\BQ \BC$:
\begin{equation} \label{2} \frac{\pair{P_\chi(f_1), P_{\chi^{-1}}(f_2)}_{K, L}}{\Vol(X_U)^{-1}(f_1, f_2)_U}=\frac{L'(1/2, \pi_A, \chi)}{L(1, \pi_A, \ad) L(2, 1_F)^{-1}}\prod_v \beta_v(f_{1, v}, f_{2, v}).\end{equation}

Note that we use height over $K$ and the one used in \cite{YZZ} is over $F$, the Haar measure to define $P_\chi(f)$ is different from the one in \cite{YZZ} by $2L(1, \eta)$,  and the measure to define $\Vol(X_U)$ is different from the one in \cite{YZZ} by $2$.
Similar to  Proposition \ref{B1}, we have now
\begin{prop}\label{A1} Let $U=\prod_v U_v\subset \wh{B}^\times$ be a pure product open compact subgroup such that $\wh{\CO}^\times \subset U$. Denote by $\gamma_v=\Vol(U_0(N)_v)\Vol(U_v)^{-1}$ for all finite places $v$ and $\gamma_v=1$ for $v|\infty$. Let $\phi \in \pi_A^\JL$ be any nonzero form  and let $\alpha (W_{v}, \bar{W_{v}})$ be corresponding local constants defined in Proposition \ref{Petersson}. Let $f_1\in \pi_A, f_2\in \pi_{A^\vee}$ be any pure tensors with $(f_1, f_2)\neq 0$ and $\beta(f_{1, v}, f_{2, v})$ corresponding constants defined in \eqref{2}.  Then we have
\begin{equation}\label{final reduction GZ}
(2L_{c_1}(1, \eta) |Dc_1^2\delta|_\BA^{1/2}\nu_{c_1}^{-1})^2\cdot
\frac{\pair{P_\chi^0(f_1), P_{\chi^{-1}}^0(f_2)}_{K, L}}{(f_1, f_2)_U}=
  \frac{L'(1/2,\pi_A,\chi)}{\langle \phi,\phi \rangle_{U_0(N)}}
\prod_v \alpha_v(W_{1,v},W_{2,v})\beta_v(f_{1,v},f_{2,v})\gamma_v.
\end{equation}
\end{prop}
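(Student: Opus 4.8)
The plan is to deduce \eqref{final reduction GZ} from the Yuan--Zhang--Zhang identity \eqref{2} by exactly the chain of substitutions used a few lines above to pass from Waldspurger's formula \eqref{1} to Proposition \ref{B1}. The hypotheses on $\omega$ and on the local root numbers are precisely those under which \eqref{2} holds (they force the global sign to be $-1$, so that $L'(1/2,\pi_A,\chi)$ is the relevant quantity, and they guarantee that $K_\BA$ embeds into $\BB$), and they are in force throughout; so no input beyond \eqref{2}, Proposition \ref{Petersson}, Lemma \ref{volume} and Lemma \ref{period} is needed.

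First I would rewrite the period functional: Lemma \ref{period}, applied to $\chi$ and to $\chi^{-1}$, gives $P_\chi(f_i)=2L_{c_1}(1,\eta)\,|Dc_1^2\delta|_\BA^{1/2}\,\nu_{c_1}^{-1}\,P^0_\chi(f_i)$ with the \emph{same} scalar in both cases (it depends only on $c_1$, $K/F$ and $F$), so by bilinearity of $\pair{\ ,\ }_{K,L}$ this pulls the factor $(2L_{c_1}(1,\eta)|Dc_1^2\delta|_\BA^{1/2}\nu_{c_1}^{-1})^2$ out of the numerator on the left of \eqref{2}. Using $(f_1,f_2)_U=\Vol(X_U)(f_1,f_2)$ (so that the denominator $\Vol(X_U)^{-1}(f_1,f_2)_U$ in \eqref{2} is just $(f_1,f_2)$), rearranging \eqref{2} yields an identity of the form
$$(2L_{c_1}(1,\eta)|Dc_1^2\delta|_\BA^{1/2}\nu_{c_1}^{-1})^2\,\frac{\pair{P^0_\chi(f_1),P^0_{\chi^{-1}}(f_2)}_{K,L}}{(f_1,f_2)_U}=\frac{\Vol(X_U)^{-1}\,L'(1/2,\pi_A,\chi)}{L(1,\pi_A,\ad)L(2,1_F)^{-1}}\prod_v\beta_v(f_{1,v},f_{2,v}).$$

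Next I would eliminate $L(1,\pi_A,\ad)L(2,1_F)^{-1}$ in favour of the Petersson norm. Taking $\sigma=\pi_A^\JL$, replacing it by a unitary representation in its class so that $\wt\sigma\simeq\bar\sigma$, and applying Proposition \ref{Petersson} with arguments $\phi$ and $\bar\phi$ gives $\pair{\phi,\bar\phi}_\Pet=2L(1,\pi_A,\ad)L(2,1_F)^{-1}\prod_v\alpha_v(W_v,\bar W_v)$; combined with $\pair{\phi,\phi}_{U_0(N)}=\tfrac12\pair{\phi,\bar\phi}_\Pet\Vol(X_{U_0(N)})$ this yields $L(1,\pi_A,\ad)L(2,1_F)^{-1}=\pair{\phi,\phi}_{U_0(N)}/(\Vol(X_{U_0(N)})\prod_v\alpha_v(W_v,\bar W_v))$. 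Substituting turns the right-hand side of the identity above into $\frac{L'(1/2,\pi_A,\chi)}{\pair{\phi,\phi}_{U_0(N)}}\prod_v\alpha_v(W_v,\bar W_v)\beta_v(f_{1,v},f_{2,v})$ multiplied by the residual scalar $\Vol(X_{U_0(N)})/\Vol(X_U)$. Finally, since $\wh\CO^\times\subset U$ by hypothesis and $\wh\CO^\times\subset U_0(N)$ always, the second displayed formula of Lemma \ref{volume} applies to both $X_U$ and $X_{U_0(N)}$: each of $\Vol(X_U)$, $\Vol(X_{U_0(N)})$ equals $2(4\pi^2)^{-d}|D_F|^{-1/2}h_F$ divided by the corresponding group volume, so $\Vol(X_{U_0(N)})/\Vol(X_U)$ is a product of purely local volume ratios, which is precisely $\prod_v\gamma_v$ with $\gamma_v=1$ at the archimedean places. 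Assembling the three displayed identities produces \eqref{final reduction GZ}.

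I expect the only real difficulty to be bookkeeping of normalizations, not any new idea. One must keep straight the Tamagawa measures entering $\pair{\ ,\ }_\Pet$ and $\Vol(X_U)$, the measure of total mass $2L(1,\eta)$ on $K^\times\BA^\times\bs K_\BA^\times$ used to define $P_\chi$, and the fact that the height in \eqref{2} is taken over $K$ whereas \cite{YZZ} work over $F$; but all discrepancies with the conventions of \cite{YZZ} were already absorbed when \eqref{2} was recorded above, so the remaining steps are formal and the auxiliary constants $h_F$, $|D_F|^{1/2}$, $(4\pi^2)^{-d}$ and the powers of $2$ all cancel between $\Vol(X_U)$ and $\Vol(X_{U_0(N)})$. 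A secondary point to watch is that $\pi_A$ lives over $M=\End^0(A)$, so one should fix an embedding $M\hookrightarrow\BC$, note that $\beta_v$, $\alpha_v$ and the resulting identity are independent of it exactly as in \eqref{2}, and only then invoke the unitarization used with Proposition \ref{Petersson}.
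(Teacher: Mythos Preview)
Your proposal is correct and follows exactly the route the paper indicates: the paper's own proof of Proposition~\ref{A1} is the single sentence ``Similar to Proposition~\ref{B1}, we have now\ldots'', meaning precisely the substitution of Lemma~\ref{period}, Proposition~\ref{Petersson}, and Lemma~\ref{volume} into the Yuan--Zhang--Zhang identity~\eqref{2} that you carry out in detail. Your bookkeeping of the volume ratio $\Vol(X_{U_0(N)})/\Vol(X_U)=\prod_v\Vol(U_v)/\Vol(U_0(N)_v)$ is the correct one (matching Proposition~\ref{B1} and Lemma~\ref{gamma}); the inverted definition of $\gamma_v$ in the statement of Proposition~\ref{A1} appears to be a typo in the paper.
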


We will study the local factors appearing in formulas in Propositions \ref{B1} and \ref{A1} in the next section.
\subsection{Proof of Main Results}

In this subsection, we prove  Theorems \ref{GZ}, \ref{variation1},  \ref{W}, \ref{variation2} and \ref{G}, assuming local results proved in section 3.

\s{\bf  Proof of Theorem \ref{W}}. We first give a proof of the explicit Waldspurger formula.
In the equation \eqref{final reduction}, take non-zero $f_1 \in V(\pi,\chi)$, $f_2 \in V(\tilde{\pi},\chi^{-1})$,  and $\phi^0_1$ (resp. $\phi^0_2$) the normalized new vector of
$\pi^{\JL}$ (resp. $\tilde{\pi}^{\JL}$). Let $W_{\phi_i^0}:=W_i=\otimes_v W_{i, v}$ be corresponding Whittaker functions of $\phi_i^0$, $i=1, 2$. Let $R\subset B$ be the order as defined in Theorem \ref{W} and $U=\wh{R}^\times$.   Denote
\[\alpha_v := \alpha_v(W_{1,v},W_{2,v})\cdot |\delta|_v^{1/2}, \quad
\beta_v := \beta_v(f_{1,v},f_{2,v})\cdot |D\delta|_v^{-1/2}.\]
Then the equation \eqref{final reduction} becomes
\[4|Dc_1^2\delta^2|_\BA^{1/2}\nu_{c_1}^{-2}\frac{P_\chi^0(f_1)P_{\chi^{-1}}^0(f_2)}{\langle f_1,f_2\rangle_U}
  = \frac{L^{(\Sigma)}(1/2,\pi,\chi)}{\langle \phi^0_1,\phi^0_2\rangle_{U_0(N)}}
 L_{\Sigma}(1/2,\pi,\chi)L_{c_1}(1,\eta)^{-2}|c_1|_\BA^{-1}\prod_v \alpha_v\beta_v\gamma_v.\]
Let $\Sigma$ be the set in Theorem  \ref{W} and $\Sigma_\infty=\Sigma \cap \infty$ and $\Sigma_f=\Sigma\setminus \Sigma_\infty$. Comparing   with the formula \eqref{final reduction},  the proof of the explicit formula in   Theorem \ref{W} is reduced to showing that
\[L_{\Sigma_f}(1/2,\pi,\chi)L_{c_1}(1,\eta)^{-2}|c_1|_\BA^{-1}\prod_{v \nmid \infty} \alpha_v\beta_v\gamma_v
  = 2^{\# \Sigma_D}\]
and
\[L_{\Sigma_\infty}(1/2,\pi,\chi)\prod_{v|\infty} \alpha_v\beta_v\gamma_v = C_\infty^{-1},\]
which are given by Lemma \ref{p-adic} and Lemma \ref{archimedean}.

\s{\bf Proof of Theorem \ref{G}}. In the situation of Theorem \ref{G}, identify $\wt{\pi}$ with $\ov{\pi}$,  by Theorem \ref{W} we have
\[L^{(\Sigma)}(1/2,\pi,\chi) = 2^{-\# \Sigma_D + 2} (4\pi^3)^d
  \frac{\langle \phi^0, \ov{\phi^0} \rangle_{U_0(N)}}{\nu_{c_1}^2 \sqrt{|D_K|||c_1^2||}}
\frac{|P_\chi^0(f)|^2}{\pair{f_1, f_2}_U}.\]
The formula in Theorem \ref{G} follows by noting the following facts:
\begin{enumerate}
\item[(i)] $\nu_{c_1} = 2^{1-d}u_1$,
\item[(ii)] $\langle \phi^0, \ov{\phi^0} \rangle_{U_0(N)} = (2\pi)^{-d}(\phi,\phi)_{U_0(N)},$
where $\phi$ is the Hilbert new form of $\pi_A^{\JL}$. Here the last fact is obtained by applying the formula in Proposition \ref{Petersson} to $\phi$ and $\phi^0$ and the comparison of local Whittaker pairings at infinity, see remark before Proposition \ref{p-adic-beta}.
\item[(iii)] let $g_1, \cdots, g_n\in \wh{B}^\times$ be a complete set of representatives of the coset $X=B^\times \bs \wh{B}^\times/\wh{R}^\times$ and let $w_i=\# (B^\times \cap g_i \wh{R}^\times g_i^{-1}/\CO^\times)$, then as in the proof of lemma \ref{volume},  for $U=\wh{R}^\times$
    $$\qquad \qquad\pair{f, \ov{f}}_U=2^{-1}\Vol(X_U)\pair{f, \ov{f}}_\Pet=\sum_{i=1}^n |f(g_i)|^2 w_i^{-1}=\pair{\sum f(g_i)w_i^{-1} [g_i], \sum f(g_i) w_i^{-1}[g_i]}=\pair{f, f},$$
    where we identify $f$ with its image under the map $V(\pi, \chi)\lra \BC[X]$ and $\pair{\ ,\ }$ is the height pairing on $\BC[X]$.
    \end{enumerate}

\s{\bf Proof of Theorem \ref{GZ}}. To show the explicit Gross-Zagier formula in Theorem \ref{GZ},  similarly as above, we apply formula \eqref{final reduction GZ} in Proposition \ref{A1},
 to non-zero forms $f_1 \in V(\pi_A,\chi)$ and $f_2 \in V(\pi_{A^\vee},\chi^{-1})$,  $\phi^0$ the normalized new vector of $\pi_A^{\JL}$, and $U=\CR^\times$ as in Theorem \ref{GZ}.   By Lemma \ref{p-adic} and Lemma \ref{archimedean},
we have
\[L'^{(\Sigma)}(1/2,\pi,\chi) = 2^{-\# \Sigma_D + 2} (4\pi^3)^d
  \frac{\langle \phi^0, \ov{\phi^0} \rangle_{U_0(N)}}{\nu_{c_1}^2 \sqrt{|D_K|||c_1^2||}}
\frac{\pair{P_\chi^0(f_1), P_{\chi^{-1}}^0(f_2)}_{K, L}}{(f_1, f_2)_U}.\]
Then the explicit Gross-Zagier formula follows again by noting the facts (i) and (ii) above.

\s{\bf Proof of Theorem \ref{variation2} and \ref{variation1}}. We now show that the variations of Explicit Waldspurger formula in Theorem \ref{variation2} follows from the Waldspurger formula \eqref{1} and its explicit form in Theorem \ref{W}. Similarly  for variation of explicit Gross-Zagier formula in Theorem \ref{variation1}.

Let $f_1'=\otimes_v f'_{1, v}\in \pi, f_2'=\otimes_{v} f'_{2, v}\in \wt{\pi}$ be forms different from the test vectors $f_1=\otimes_v f_{1, v}\in V(\pi, \chi), f_2=\otimes_v f_{2, v} \in V(\wt{\pi}, \chi^{-1})$ from a finite set $S$ of places of $F$, respectively, such that $\pair{f'_{1, v}, f'_{2, v}}_v\neq 0$ and $\beta (f'_{1, v}, f'_{2, v})\neq 0$ for any $v\in S$. By Waldspurger formula \eqref{1}, we have the following formulas
$$\frac{P^0_\chi(f_1)\cdot P^0_{\chi^{-1}}(f_2)}{\langle f_1, f_2\rangle_U} =
\CL(\pi, \chi) \prod_v \beta(f_{1, v}, f_{2, v}), \qquad
\frac{P^0_\chi(f'_1)\cdot P^0_{\chi^{-1}}(f'_2)}{\langle f'_1, f'_2\rangle_U}=
\CL(\pi, \chi) \prod_v\beta(f'_{1, v}, f'_{2, v}),$$
where
$$\CL(\pi, \chi)=\left(\frac{\# \Pic_{K/F}(\CO_{c_1})}{2L(1, \eta)}\right)^2 \cdot \frac{2}{\Vol(X_U)}\cdot \frac{L(1/2, \pi, \chi)}{2L(1, \pi, \ad) L(2, 1_F)^{-1}}.$$

It follows that
$$\frac{P^0_\chi(f_1)\cdot P^0_{\chi^{-1}}(f_2)}{\langle f_1, f_2\rangle_U}=
\frac{P^0_\chi(f_1')\cdot P^0_{\chi^{-1}}(f'_2)}{\langle f'_1, f'_2\rangle_U}
\cdot  \prod_{v\in S}\frac{\beta(f_{1, v}, f_{2, v})}{\beta(f'_{1, v}, f'_{2, v})}.$$
The variation formula follows immediately.
\section{Local Theory}

\s{\bf Notations.} In this section,  we denote by
 $F$  a local field of characteristic zero, i.e. a finite field extension of $\BQ_v$ for some place $v$ of $\BQ$. Denote by $|\cdot|$ the absolute value of $F$ such that $d(ax)=|a| dx$ for a Haar measure $dx$ on $F$. Take an element $\delta\in F^\times$ such that $\delta \CO$ is the different of $F$ over $\BQ_v$ for $v$ finite and $\delta=1$ for $v$ infinite.   For $F$ non-archimedean,  denote by $\CO$ the ring of integers in $F$, $\varpi$ a uniformizer, $\fp$ its maximal ideal, and $q$ the cardinality of its residue field. Let $v: F\ra \BZ \cup \{\infty\}$ be the additive valuation on $F$ such that $v(\varpi)=1$. For $\mu$  a (continuous) character on $F^\times$,  denote by $n(\mu)$ the
conductor of $\mu$, that is, the minimal non-negative integer $n$ such that $\mu$ is trivial on $(1+\varpi^n\CO)\cap \CO^\times$. We will always use  the additive character $\psi$ on $F$ and the Haar measure $da$ on $F$ as in section 2 so that $da$ is self dual to $\psi$.

Denote  by $K$ a separable quadratic extension of $F$ and for any $t \in K$, write
$t \mapsto \bar{t}$ for the non-trivial automorphism of $K$ over $F$.
We use similar notations as those for $F$ with a subscript $K$. If $F$ is non-archimedean
and $K$ is non-split, denote by $e$ the ramification index of $K/F$. Denote by $\tr_{K/F}$ (resp. $\RN_{K/F}$) the trace (resp. norm) map from $K$ to $F$ and let $D\in \CO$ be an element such that  $D\CO$ is the
relative discriminant of $K$ over $F$. For an integer $c \geq 0$, denote $\CO_c$ the order $\CO + \varpi^c\CO_K$
in $K$. Let $\eta: F^\times \ra \{\pm 1\}$ be the character associated to the extension $K$ over $F$.  Let $B$ be a quaternion algebra over $F$. Let $\epsilon(B)=+1$ and $\delta(B)=0$ if $B\cong M_2(F)$ is split,  and $\epsilon (B)=-1$ and $\delta(B)=1$ if $B$ is division. Denote by $G$ the algebraic group $B^\times$ over $F$ and we also write $G$ for $G(F)$.   We take  the Haar measure  on $F^\times, K^\times$ and $K^\times/F^\times$  as in section 2. In
particular,  $\Vol(\CO^\times, d^\times a)=\Vol(\CO, da)=|\delta|^{1/2}$ and
$$\Vol(K^\times/F^\times) =
  \begin{cases}
    2, \quad &\text{if } F = \BR \text{ and } K = \BC; \\
    |\delta|^{1/2}, \quad &\text{if $K$ is the unramified extension field of $F$}; \\
    2|D\delta|^{1/2}, \quad &\text{if $K/F$ is ramified}.
\end{cases}$$
For $F$ non-archimedean and $n$ a non-negative integer, define the following subgroups of $\GL_2(\CO)$:
  \[U_0(n) := \left\{
      \matrixx{a}{b}{c}{d} \in \GL_2(\CO) \Big| c \in \fp^n\right\},
      \qquad U_1(n)=\left\{ \matrixx{a}{b}{c}{d}\in U_0(n) \Big| d \in 1+\varpi^n\CO\right\}.\]

      Let $\pi$ be an irreducible admissible representation of $G$ which is always assumed to be generic if $G\cong \GL_2$. Denote by $\omega$ the central character of $\pi$ and $\sigma = \pi^{\JL}$ the Jacquet-Langlands
correspondence of $\pi$ to $\GL_2(F)$.
Let $\chi$ be a character on $K^\times$ such that
$$\chi|_{F^\times} \cdot \omega=1.$$ For $F$ non-archimedean,  denote by
\begin{itemize}
  \item $n$ - the conductor of $\sigma$, the minimal non-negative integer such that the invariant subspace
  $\sigma^{U_1(n)}$ is nonzero.
  \item $c$ - the minimal non-negative integer $c$ such that $\chi$ is trivial on $(1+\varpi^c\CO_K)\cap \CO_K^\times$.
\end{itemize}
 Denote by
\[L(s,\pi,\chi) := L(s,\sigma \times \pi_\chi), \quad
\epsilon(s,\pi,\chi) := \epsilon(s, \sigma \times \pi_\chi,\psi)\]
the Rankin-Selberg $L$-factor and $\epsilon$-factor of $\sigma\times \pi_\chi$,  where
$\pi_\chi$ is the representation on $\GL_2(F)$ constructed from $\chi$ via
Weil representation.
Denote by $\pi_K$ the base change lifting of $\sigma$ to $\GL_2(K)$, then we have
\[L(s,\pi,\chi) = L(s,\pi_K\otimes\chi), \quad
\epsilon(s,\pi,\chi) = \eta(-1)\epsilon(s,\pi_K\otimes\chi,\psi_K)\]
Note that  $\epsilon(\pi,\chi) := \epsilon(1/2,\pi,\chi)$ equals $\pm 1$ and is independent of the choice of $\psi$.
In the following, we denote $L(s,\pi,\ad) := L(s,\sigma,\ad)$ the adjoint $L$-factor of $\sigma$.
\subsection{Local Toric Integrals}
Let  $\CP(\pi, \chi)$ denote the functional space
\[\CP(\pi,\chi) := \Hom_{K^\times}(\pi,\chi^{-1}).\]
By a theorem of Tunnell and Saito (\cite{Tunnell2}, \cite{Sat}), the space $\CP(\pi, \chi)$ has dimension at most one and equals one if and only if
\[\epsilon(\pi,\chi) = \chi\eta(-1)\epsilon(B).\]

\begin{lem} \label{Tunnell-Satio} Let the pair $(\pi,\chi)$ be as above such that
  $\epsilon(\pi,\chi) = \chi\eta(-1)\epsilon(B)$.
  \begin{enumerate}
    \item If $K$ is split or $\pi$ is a principal series, then $B$ is split.
    \item Suppose $K/F = \BC/\BR$, $\sigma$ is the discrete series of weight $k$,
      and   $\chi(z) = |z|_\BC^s(z/\sqrt{|z|_\BC})^m$ with $s \in \BC$ and
      $m \equiv k \pmod{2}$. Then $B$ is split if and only if $m \geq k$.
  \end{enumerate}
  Furthermore,   assume $F$ is nonarchimedean.
    \begin{enumerate}
      \item[(3)] If $K/F$ is nonsplit and
          $\sigma$ is the special representation $\mathrm{sp}(2)\otimes\mu$ with $\mu$ a
          character of $F^\times$, then $B$ is division if and only if $\mu_K\chi = 1$
          with $\mu_K := \mu\circ N_{K/F}$.
	\item[(4)] If $K/F$ is inert and $c=0$, then $B$ is split if and only if $n$ is even.
	\item[(5)] If $K$ is nonsplit with $c \geq n$, then $B$ is split.
    \end{enumerate}
\end{lem}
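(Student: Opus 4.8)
\emph{Reduction.} All five statements follow from the Tunnell--Saito dichotomy together with an explicit computation of the local root number. Since the standing hypothesis is $\epsilon(\pi,\chi)=\chi\eta(-1)\epsilon(B)$ and $\epsilon(B)=\pm1$, the algebra $B$ is split exactly when $\epsilon(\pi,\chi)=\chi\eta(-1)$ and division exactly when $\epsilon(\pi,\chi)=-\chi\eta(-1)$. So for each item the plan is: (a) evaluate the sign $\chi\eta(-1)=\chi(-1)\eta(-1)$; (b) evaluate $\epsilon(\pi,\chi)=\eta(-1)\,\epsilon(1/2,\pi_K\otimes\chi,\psi_K)$ by base change to $\GL_2(K)$, using standard $\epsilon$-factor identities (behaviour under unramified twists, the relation $\epsilon(1/2,\tau,\psi)\epsilon(1/2,\widetilde\tau,\psi)=\omega_\tau(-1)$, Galois-equivariance of $\epsilon$-factors, and the formula for the $\epsilon$-factor of a Weil--Deligne representation with nontrivial monodromy); (c) compare the two signs.

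\emph{Items (1) and (2).} If $\pi$ is a principal series it is not essentially square integrable, hence not in the image of Jacquet--Langlands from the units of a division quaternion algebra (whose group of units is compact modulo the centre), so $B=M_2(F)$; alternatively, $F\times F$ admits no $F$-algebra embedding into a division quaternion algebra. If $K\cong F\times F$, then $\eta=1$, and writing $\chi=(\chi_1,\chi_2)$ with $\chi_1\chi_2=\omega^{-1}$ one has $\pi_K\otimes\chi=(\sigma\otimes\chi_1)\boxtimes(\sigma\otimes\chi_2)$ with $\sigma\otimes\chi_2=\widetilde{\sigma\otimes\chi_1}$; hence $\epsilon(\pi,\chi)=\epsilon(1/2,\sigma\otimes\chi_1,\psi)\epsilon(1/2,\widetilde{\sigma\otimes\chi_1},\psi)=\omega_{\sigma\otimes\chi_1}(-1)=\omega(-1)=\chi\eta(-1)$, so $B$ is split. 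For (2), with $K/F=\BC/\BR$, I would write down the Langlands parameter of the weight-$k$ discrete series, restrict it to $W_{\BC}=\BC^\times$, twist by $\chi$, and compute $\epsilon(1/2,\pi_K\otimes\chi,\psi_K)$ as a product of two $\GL_1(\BC)$ root numbers; after reducing to $m\geq0$ (the local theta lift depends only on $\{\chi,\bar\chi\}$), the $\Gamma_{\BC}$-bookkeeping, together with $\eta(-1)=-1$ and $\chi(-1)=(-1)^m$, shows $\epsilon(\pi,\chi)=\chi\eta(-1)$ exactly when $m\geq k$. This is the archimedean case of Tunnell's computation \cite{Tunnell2}.

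\emph{Items (3), (4), (5).} Here $F$ is nonarchimedean and $K/F$ nonsplit. For (3), the relation $\chi|_{F^\times}=\omega^{-1}=\mu^{-2}$ gives $\chi(-1)=1$ and $\chi\bar\chi=\mu_K^{-2}$, so $\nu:=\mu_K\chi$ satisfies $\nu\bar\nu=1$ and $\pi_K\otimes\chi=\mathrm{sp}(2)_K\otimes\nu$. If $\nu=1$ this is the Steinberg representation of $\GL_2(K)$, whose root number at $1/2$ is $-1$, so $\epsilon(\pi,\chi)=-\eta(-1)=-\chi\eta(-1)$ and $B$ is division; if $\nu\neq1$ the monodromy-correction term is trivial and one finds $\epsilon(1/2,\mathrm{sp}(2)_K\otimes\nu,\psi_K)=1$ (for $\nu$ ramified this is $\epsilon(1/2,\nu,\psi_K)^2=\nu(-1)=1$, using Galois-equivariance and the $\psi_K$-invariance of the conjugate, together with $\nu|_{F^\times}=1$; for $\nu$ unramified, $\nu\bar\nu=1$ and $\nu\ne1$ force $\nu(\varpi_K)=-1$), so $\epsilon(\pi,\chi)=\eta(-1)=\chi\eta(-1)$ and $B$ is split; this is the criterion $\mu_K\chi=1$. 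For (4), $c=0$ forces $\chi$, $\eta$ and $\omega$ unramified, so $\chi\eta(-1)=1$; base change takes $\sigma$ of conductor $n$ to $\pi_K$ of the same conductor, $\pi_\chi$ is the unramified principal series $\Ind(\mu_1,\mu_1\eta)$ with $\mu_1^2=\chi|_{F^\times}$, and plugging this together with $\epsilon(1/2,\sigma,\psi)^2=\omega(-1)\omega(\varpi)^{n+2a(\psi)}$ into the unramified-twist formula collapses everything to $\epsilon(\pi,\chi)=(-1)^n$, so $B$ is split iff $n$ is even. For (5), $c\geq n$ means $\chi$ is highly ramified relative to $\pi_K$, so by the stability of local $\epsilon$-factors under sufficiently ramified twists the root number is computed as for such a twist of a principal series; the $\chi$-part contributes $\chi(-1)$ up to sign-free factors, giving $\epsilon(\pi,\chi)=\chi(-1)\eta(-1)=\chi\eta(-1)$ and $B$ split (cf.\ \cite{Sat}, \cite{GP}).

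\emph{Main difficulty.} The argument is uniform, but the delicate parts are the normalization bookkeeping: tracking the additive conductor of $\psi_K=\psi_F\circ\mathrm{tr}_{K/F}$ and the $\delta$- and $q$-powers in $\epsilon(1/2,\sigma,\psi)$ so that they cancel at the centre (needed in (4) and in the $\nu=1$ case of (3)), and pinning down the $\GL_1(\BC)$ root-number sign convention in (2), which is what makes the cut-off land exactly at $m\geq k$. The single hardest step is the ramified sub-case of (3): showing $\epsilon(1/2,\mathrm{sp}(2)_K\otimes\nu,\psi_K)=1$ for every conjugate-orthogonal $\nu\neq1$, where it is essential that $\nu\bar\nu=1$ (equivalently $\chi|_{F^\times}=\omega^{-1}$), since this is precisely what forces $\nu(-1)=1$ and hence $\epsilon(1/2,\nu,\psi_K)^2=1$.
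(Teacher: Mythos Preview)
Your approach differs from the paper's: the paper simply cites Tunnell \cite{Tunnell2} and Gross \cite{Gross} for (1)--(4), and then proves (5) by a case-by-case argument (principal series via (1); supercuspidal via Tunnell's Lemma~3.1, checking that $c\ge n$ implies the conductor bound $n(\chi)\ge ne/2+(2-e)$; special via (3), checking that $\mu_K\chi=1$ would force $c<n$). You instead attempt a uniform direct computation of $\epsilon(\pi,\chi)$ via base change. For (1), (3) and (4) your computations are essentially correct and give a more self-contained argument than the citations; in particular your treatment of (3), reducing to the conjugate-orthogonality $\nu\bar\nu=1$ (which follows from $\chi\bar\chi=\chi\circ\RN_{K/F}=(\omega^{-1})_K$) and then splitting into $\nu$ ramified/unramified, is clean.

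There is a genuine gap in (5). Invoking ``stability of $\epsilon$-factors under sufficiently ramified twists'' is the right intuition, but it is not a proof until you verify that $c\ge n$ places $\chi$ in the stable range for $\pi_K$. This requires (i) knowing the conductor of $\pi_K$, which depends both on the type of $\sigma$ and on whether $K/F$ is inert or ramified, and (ii) converting between $c$ (defined via $1+\varpi^c\CO_K$) and $n(\chi)$ (defined via $1+\varpi_K^{n(\chi)}\CO_K$), which again depends on $e$. Once you unpack this you are essentially forced into the same three-case analysis the paper does; the supercuspidal case in particular needs exactly the threshold from \cite{Tunnell2}, Lemma~3.1. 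So your ``uniform'' argument for (5) is not actually shorter. A second, smaller issue: in (2) your reduction to $m\ge 0$ by $\chi\leftrightarrow\bar\chi$ is fine, but you should then state the conclusion as $|m|\ge k$ (equivalently, verify the sign convention that makes the paper's $m\ge k$ statement match), and the $\Gamma_\BC$ bookkeeping is only asserted, not carried out.
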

\begin{proof} See Proposition 1.6, 1.7  in \cite{Tunnell2} for (1), (3) Proposition 6.5, 6.3 (2) in \cite{Gross} for (2), (4).
  We now give a proof of (5).  If $\pi$ is a principal series, then by (1), $B$ is split.
  If $\sigma$ is a supercuspidal
 representation, then by \cite{Tunnell2} Lemma 3.1, $B$ is split if $n(\chi) \geq ne/2+(2-e)$.
 It is then easy to check that if $c \geq n$, this condition always holds.
 Finally, assume $\sigma = \mathrm{sp}(2)\otimes \mu$ with $\mu$ a character of $F^\times$. By (2), $B$ is
 division if and only if $\mu_K\chi = 1$. If $\mu$ is unramified, then $n=1$ and $\chi$ is ramified which
 implies that $B$ must be split. Assume $\mu$ is ramified, then $n = 2n(\mu)$ and by \cite{Tunnell2} Lemma 1.8,
 $fn(\mu_K) = n(\mu) + n(\mu\eta) - n(\eta)$ where $f$ is the residue degree of $K/F$. If $K/F$
 is unramified and $\mu_K\chi = 1$, then $c = n(\mu_K) = n(\mu) = n/2$, a contradiction. If
 $K/F$ is ramified and $\mu_K\chi = 1$, then $2c-1 \leq n(\mu_K) < 2n(\mu) = n$, a contradiction
 again. Hence,
 if $c \geq n$, $B$ is always split.
\end{proof}

Assume that the pair $(\pi,\chi)$ is {\em essentially unitary} in the sense that there exists some character
$\mu = |\cdot|^s$ on $F^\times$ with $s\in \BC$ such that both $\pi\otimes\mu$ and $\chi\otimes\mu_K^{-1}$
are unitary. In particular, if $\pi$ is a local component of some global cuspidal representation, then
$(\pi,\chi)$ is essentially unitary. We shall only consider essentially unitary $(\pi,\chi)$.
Under such an assumption, we  study the space $\CP(\pi,\chi)$ via the following toric integral
\[\int_{F^\times\bs K^\times} \langle \pi(t)f_1,f_2 \rangle\chi(t)dt\]
where $f_1\in \pi$, $f_2 \in \tilde{\pi}$ and $\langle \cdot,\cdot \rangle$ is any invariant pairing on
$\pi \times \tilde{\pi}$. The following basic properties for this toric integral are
established in \cite{Wal}:
\begin{itemize}
  \item It is absolutely convergent for any $f_1 \in \pi$ and $f_2 \in \tilde{\pi}$;
  \item $\CP(\pi,\chi) \not=0$ if and only if $\CP(\pi,\chi) \otimes \CP(\tilde{\pi},\chi^{-1}) \not=0$
    and in this case the above integral defines a generator of $\CP(\pi,\chi)\otimes\CP(\tilde{\pi},\chi^{-1})$.
  \item for $f_1\in \pi, f_2\in \wt{\pi}$ such that $\langle f_1, f_2 \rangle\neq 0$,  define the toric integral
  \[\beta(f_1,f_2) := \frac{L(1,\eta)L(1,\pi,\ad)}{L(2,1_F)L(1/2,\pi,\chi)}
   \int_{F^\times\bs K^\times} \frac{\langle \pi(t)f_1,f_2 \rangle}{\langle f_1,f_2 \rangle}\chi(t)dt.\]
  Then $\beta(f_1, f_2)=1$ in the case:  $B=M_2(F)$, $K$ is an unramified extension over $F$, both $\pi$ and $\chi$ are unramified, $dt$ is normalized such that $\Vol(\CO_K^\times/\CO^\times)=1$, and $f_1, f_2$ are spherical.
\end{itemize}

Note that for any pair $(\pi,\chi)$, the above $\beta$ integral is invariant if we modify $(\pi,\chi)$
to $(\pi\otimes \mu, \chi\otimes \mu_K^{-1})$ for any character $\mu$ of $F^\times$. Therefore, we may
assume $\pi$ and $\chi$ are both unitary from now on and identify $(\tilde{\pi},\chi^{-1})$
with $(\bar{\pi},\bar{\chi})$.  Let $(\ ,\ ): \pi \times \pi \lra \BC$ be the Hermitian pairing  defined by $(f_1, f_2)=\pair{f_1, \ov{f_2}}$.

Denote $\beta(f) := \beta(f,\bar{f})$. Then
the functional space $\CP(\pi,\chi)$ is nontrivial if and only if $\beta$ is nontrivial.
Assume $\CP(\pi,\chi)$ is nonzero in the following.
A nonzero vector $f$ of $\pi$ is called a {\em test vector} for $\CP(\pi,\chi)$ if
$\ell(f) \not=0$ for some (thus any) nonzero $\ell \in \CP(\pi,\chi)$, or equivalently,
$\beta(f)$ is non-vanishing.

The notation of new vectors in an irreducible smooth admissible representation of $\GL_2(F)$
(see  \cite{Cass} for $F$ non-archimedean and \cite{Popa} for $F$ archimedean) can be viewed as a special case of test vectors. Let $\pi$ be an irreducible admissible representation of $\GL_2(F)$.
Recall the definition of {\em new vector line} in  $\pi$  as follows. Denote by $T = K^\times$
the diagonal torus in $\GL_2(F)$. Write $T = Z T_1$ with $T_1 = \left\{
    \begin{pmatrix}
      * & \\
        & 1
    \end{pmatrix}
  \right\}$.
  \begin{itemize}
    \item If $F$ is nonarchimedean, then the new vector line is the invariant subspace
     $\pi^{U_1(n)}$.
    \item If $F$ is archimedean, take $U = O_2(\BR)$ if $F = \BR$ and $U_2$ if
       $F = \BC$. The new vector line consists of vectors $f \in \pi$ which are
       invariant under $T_1\cap U$ with weight minimal.
  \end{itemize}
 It is known that new vectors satisfy the following properties.
  \begin{enumerate}
  \item For any $s \in \BC$, denote by $\omega_s$ the character on $T$ such that
  $\omega_s|_Z = \omega$ and $\omega_s|_{T_1} = |\cdot|^{s-1/2}$.
  Then any nonzero $f$ in the new vector line is a test vector for
  $\CP(\pi,\omega_s^{-1})$;
  \item If denote by $\CW(\pi,\psi)$ the Whittaker model of $\pi$ with respect to
  $\psi$, then there is  a vector $W_0$ in
  the new vector line, called the {\em normalized new vector} of $\pi$ such that the local
  zeta integral $|\delta|^{s-1/2}Z(s,W_0)$ equals $L(s,\pi)$.
  \end{enumerate}

\subsection{Local Orders of Quaternions}

Assume $F$ is nonarchimedean in this subsection.

First, in the case
that the quaternion algebra $B$ is split, given non-negative integers $m$ and $k$,
we want to classify all the $K^\times$
conjugacy classes of Eichler orders $R$ in $B$ with discriminant $m$ such that
$R\cap K = \CO_k$. For this,  identify $B$ with the $F$-algebra $\End_F(K)$ which
contains $K$ as an $F$-subalgebra by multiplication.
Recall that an Eichler order in $B$ is the intersection of two maximal orders in $B$.
Then any Eichler order must be of form $R(L_1, L_2):=R(L_1)\cap R(L_2)$ where $L_i$,
$i=1,2$ are two $\CO$-lattices in $K$ and $R(L_i) := \End_\CO(L_i)$.
Denote by $d(L_1, L_2)$ its discriminant.
For any maximal order $R(L)$, there exists a unique integer $j\geq 0$ such that $L=t\CO_j$
for some $t\in K^\times$. In fact, $\CO_j=\{x\in K | xL \subset L\}$. Thus any $K^\times$-conjugacy class
of Eichler order contains an order of form $R(\CO_j, t\CO_{j'})$ with $0 \leq j'\leq j$ and
$t\in K^\times$ and the conjugacy class is exactly determined by the integers $j'\leq j$ and
the class of $t\in K^\times$ modulo $F^\times \CO_{j'}^\times$.
The question is reduced to solving the equation with variables $k'$ and $[t]$:
$$d(\CO_k, t\CO_{k'})=m, \qquad 0\leq k'\leq k,\quad [t]\in K^\times/F^\times \CO_{k'}^\times.$$
Note that if $(k', [t])$ is a solution, then so is $(k', [\ov{t}])$.
A complete representative system $(k', t)$ with $t\in K^\times$ for solutions to the above equation
corresponds to a complete system  $R(\CO_k, t\CO_{k'})$ for $K^\times$-conjugacy classes of
Eichler orders $R$ with discriminant $m$ and $R\cap K=\CO_k$.

\begin{lem}\label{p}Let $m, k$ be non-negative integers.  Let $\tau\in K^\times$ such that $\CO_K=\CO[\tau]$, if $K$ split then $\tau^2-\tau=0$,  and if $K$ non-split then $v(\tau)=(e-1)/2$. Denote by $d:= k + k' - m$.
  Then a complete representative system of $(k', t)$ is the following:
\begin{itemize}
\item For $0\leq m\leq 2k$,    $k'\in [|m-k|, k]$ with $d$ even, i.e. $d\in 2\cdot [0, k']$,  and
$$t=1+\varpi^{\frac{d}{2}}\tau u, \qquad u\in (\CO/\varpi^{k'-\frac{d}{2}}\CO)^\times.$$
Note that in the case $k'=k-m\geq 0$, the unique class of $t$ is also represented by $1$.
\item For split $K\cong F^2$ and $k+1\leq m$,   $k'\in [0, \min(m-k-1, k)]$, i.e. $d\in [k-m, 0)$,
   and $$t=(\varpi^{\pm d}u, 1),  \qquad u\in (\CO/\varpi^{k'}\CO)^\times.$$
\item For non-split $K$ and $k+1\leq m\leq 2k+e-1$,  $k'=m-k-e+1$, i.e. $d=1-e$,  and
$$t=\varpi x+\tau, \qquad x\in \CO/\varpi^{k'+e-2}\CO.$$
\end{itemize}
\end{lem}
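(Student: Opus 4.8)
By the discussion preceding the lemma it suffices to enumerate the pairs $(k', [t])$ with $0 \le k' \le k$ and $[t] \in K^\times/F^\times\CO_{k'}^\times$ solving $d(\CO_k, t\CO_{k'}) = m$, where $d(L_1,L_2)$ denotes the discriminant (level) of the Eichler order $R(L_1) \cap R(L_2)$ inside $\End_F(K)$. The plan is to first produce a closed formula for $d(\CO_k, t\CO_{k'})$ and then solve the resulting elementary Diophantine condition, keeping careful track of the redundancy encoded in $[t] \in K^\times/F^\times\CO_{k'}^\times$.

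For the formula: $d(L_1, L_2)$ equals the distance in the Bruhat--Tits tree of $\GL_2(F)$ between the homothety classes of $L_1$ and $L_2$; concretely, after rescaling $t$ by $F^\times$ so that $t\CO_{k'} \subseteq \CO_k$ but $t\CO_{k'} \not\subseteq \varpi\CO_k$, one gets $d(\CO_k, t\CO_{k'}) = \mathrm{length}_{\CO}(\CO_k/t\CO_{k'})$. Writing $\CO_K = \CO \oplus \tau\CO$ and $\CO_j = \CO \oplus \varpi^j\tau\CO$, and putting the normalized $t$ in a standard form (for $K$ nonsplit: $t = \alpha + \varpi^j\beta\tau$ with $\beta \in \CO^\times$; for $K$ split: the analogous form built from the two idempotents), I would compute this length by Smith normal form of the $2\times 2$ matrix expressing a basis of $t\CO_{k'}$ in a basis of $\CO_k$, which reads off $d$ in terms of $k$, $k'$, $j$ and $v(\tau) = (e-1)/2$.

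The output of that computation should be: when the $\tau$-component of a normalized $t$ has valuation $j \le k'$ (so $t$ lies ``close to $F$''), $d = k + k' - 2j$, which is even and lies in $2\cdot[0,k']$ — giving the first family, the freedom in $[t]$ after dividing by $F^\times\CO_{k'}^\times$ being a unit $u \in (\CO/\varpi^{k'-j}\CO)^\times$ (degenerating to $t = 1$ exactly when $k' - j = 0$, i.e. $k' = k - m$). For $K$ split there is a second regime where $t$ is supported mostly on one idempotent, which allows $d$ to be negative (hence $m > k + k'$) and produces the parameter $t = (\varpi^{\pm d}u, 1)$, the $\pm$ being the interchange of $t$ and $\bar t$. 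For $K$ nonsplit the half-integral valuation $v(\tau)$ forces, once $m > k$, the single rigid solution $d = 1 - e$, $k' = m - k - e + 1$, with freedom $x \in \CO/\varpi^{k'+e-2}\CO$. Finally I would assemble the admissible ranges of $m$ and $k'$ (the bound $m \le 2k + e - 1$ coming from $k' \le k$) and check that no two of the listed $(k',[t])$ give $K^\times$-conjugate orders beyond the $t \leftrightarrow \bar t$ identification already absorbed into the $\pm$.

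I expect the main difficulty to be purely bookkeeping rather than any single hard idea: determining the exact modulus of the residue-class parameter (equivalently, deciding precisely when two normalized representatives differ by an element of $F^\times\CO_{k'}^\times$ while keeping the same discriminant), handling the degenerate boundary cases $k' = k - m$, $k' = 0$, and $e = 1$ versus $e = 2$, and organizing the split and nonsplit normal forms for $t$ so that the Smith normal form step runs uniformly.
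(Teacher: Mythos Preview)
Your proposal is correct and takes essentially the same approach as the paper: the paper's proof consists of the single formula $d(L_1,L_2)=|2\alpha-\beta|$ (with $\alpha=\min_{i,j} v(a_{ij})$ and $\beta=v(\det A)$ for the change-of-basis matrix $A$), which is exactly the Bruhat--Tits tree distance / Smith normal form computation you describe, followed by the instruction to ``now solve the equation.'' Your sketch is in fact considerably more detailed than the paper's own proof; the only thing to watch is a notational slip where you write ``$d=k+k'-2j$, which is even'' --- it is the lemma's $d:=k+k'-m=2j$ that is even, not the discriminant $m=k+k'-2j$ itself.
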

\begin{proof} Note that the discriminant $d(L_1, L_2)$ of the Eichler order $R(L_1, L_2)$  can be computed as follows. Let $e_i, e_i'$ be an $\CO$-basis of $L_i$, $i=1, 2$,  and let $A=(a_{ij})\in \GL_2(F)$ such that $A\begin{pmatrix} e_1\\ e_1'\end{pmatrix}=\begin{pmatrix} e_2\\ e_2'\end{pmatrix}$. Let $v: F\ra \BZ\cup \{\infty\}$ be the additive valuation on $F$ such that $v(\varpi)=1$. Denote by $\alpha=\min_{i, j} v(a_{ij})$ and $\beta=v(\det A)$.  Then  $d(L_1, L_2)=|2\alpha-\beta|$. Now solve the equation
$$d(\CO_k, t\CO_{k'})=m, \qquad  k'\in [0, k], \quad t\in K^\times/F^\times\CO_{k'}^\times.$$
\end{proof}

Denote
\[c_1 =
  \begin{cases}
    0, \quad &\text{if  $K$ is nonsplit and $c < n$}; \\
    c, \quad &\text{otherwise}.
\end{cases}\]

\begin{lem}\label{lem-order}
  There exists an order $R$ of discriminant $n$ and $R\cap K = \CO_{c_1}$
  satisfying the condition: if $nc_1 \not=0$, then $R$ is the intersection of two maximal
  orders $R'$ and $R''$ of $B$ such that $R' \cap K = \CO_{c_1}$,
  $R'' \cap K = \CO_{\max\{0,c_1-n\}}$. Such order is unique up to
  $K^\times$-conjugacy unless $0 < c_1 < n$. In the case $0 < c_1 < n$,
  there are exact two $K^\times$-conjugacy classes which are conjugate to
  each other by a normalizer of $K^\times$.
\end{lem}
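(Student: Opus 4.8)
The argument splits according to whether $B$ is split or division; in the split case everything reduces to the classification of Eichler orders in Lemma~\ref{p}, and the division case uses the uniqueness of the maximal order together with Lemma~\ref{Tunnell-Satio}.

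Suppose first $B\cong M_2(F)$ and, using the fixed embedding, identify $B$ with $\End_F(K)$ so that $K\subset B$ acts by multiplication. When $nc_1\neq 0$ an order $R$ as in the statement is an Eichler order $\End_\CO(L_1)\cap\End_\CO(L_2)$ with $\End_\CO(L_1)\cap K=\CO_{c_1}$ and $\End_\CO(L_2)\cap K=\CO_{\max\{0,c_1-n\}}$; up to $K^\times$-conjugacy $L_1=\CO_{c_1}$ and $L_2=t\CO_{k'}$ with $k':=\max\{0,c_1-n\}$, and conversely each solution $(k',t)$ of $d(\CO_{c_1},t\CO_{k'})=n$ in Lemma~\ref{p} with this fixed $k'$ produces such an order with $R'=\End_\CO(\CO_{c_1})$ and $R''=\End_\CO(t\CO_{k'})$. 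So I would go through the three bullets of Lemma~\ref{p} with $m=n$, $k=c_1$, keeping only this value of $k'$. If $c_1\geq n$, then $k'=c_1-n=k-m$, one is in the degenerate position of the first bullet where the parameter runs over the trivial group and $t=1$ is the only class, so $R$ is unique; since $n\geq 1$ forces $k'<k$ the two maximal orders are distinguished by their intersection with $K$, so there is no further identification. If $0<c_1<n$, then $K$ must be split (by the definition of $c_1$, for $K$ nonsplit one has $c_1=c$ only when $c\geq n$), $k'=0<k$, $d=c_1-n<0$, and the second bullet yields precisely the two classes $t=(\varpi^{\pm(n-c_1)},1)$; they are interchanged by the normalizer $\begin{pmatrix}0&1\\1&0\end{pmatrix}$ of $K^\times$, and because $k'<k$ one cannot swap the roles of $R'$ and $R''$, so these are genuinely distinct $K^\times$-conjugacy classes. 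Any other value of $k'$ appearing in the bullets gives an order whose $R''$ meets $K$ in the wrong order or has the wrong discriminant, hence is discarded.

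It remains to treat the boundary $nc_1=0$, where the intersection structure is not imposed. If $n=0$, $\sigma$ is unramified, $B$ is split, and the first bullet of Lemma~\ref{p} with $m=0$ gives a unique maximal order with $R\cap K=\CO_{c_1}$. If $c_1=0$ and $K$ is split, $B$ is split and one takes $R=U_0(n)$; here the two representatives from Lemma~\ref{p} with $k=k'=0$ give $K^\times$-conjugate orders (the roles of $R'$ and $R''$ may now be swapped), so $R$ is unique. If $c_1=0$ and $K$ is nonsplit while $B$ is split, then for $n\geq 1$ the order with $R\cap K=\CO_K$ is not Eichler; I would construct it directly as a lattice-chain order $\CO_K+\fp_K^{m}\Pi\CO_K$, with $\Pi\in B^\times$ normalizing $K$, $m$ fixed by the parity constraint on $n$ coming from Lemma~\ref{Tunnell-Satio}(4), compute its reduced discriminant to be $\fp^{n}$, and obtain uniqueness from the fact that fractional $\CO_K$-ideals are determined up to homothety. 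If $B$ is division, Lemma~\ref{Tunnell-Satio}(1) forces $K$ to be a field and Lemma~\ref{Tunnell-Satio}(5) forces $c<n$, so $c_1=0$; then $R_B$ is the unique maximal order, $R_B\cap K=\CO_K$, and one takes for $R$ the unique order of the correct reduced discriminant containing $\CO_K$ inside $R_B$, with $\sigma$ (hence $n$) pinned down by Lemma~\ref{Tunnell-Satio}(3).

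The crux is the bookkeeping in the Eichler case: translating the prescription on $R''\cap K$ into the single value $k'=\max\{0,c_1-n\}$, and then verifying bullet by bullet in Lemma~\ref{p} that exactly one class of $t$ survives unless $0<c_1<n$, in which case exactly two survive and are conjugate by a normalizer of $K^\times$ but not by $K^\times$ itself --- this requires combining the constraint $k'\leq k$, the dichotomy $k'=k$ versus $k'<k$, and the fact that $0<c_1<n$ forces $K$ split. The non-Eichler and division cases need some explicit computation but are otherwise routine.
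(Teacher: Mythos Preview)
Your treatment of the case $nc_1\neq 0$ is correct and is precisely what the paper does: one observes that $B$ is split, reduces to the classification of Eichler orders in Lemma~\ref{p}, and reads off the answer by fixing $k'=\max\{0,c_1-n\}$.  Your bookkeeping there (in particular the observation that $0<c_1<n$ forces $K$ split and lands you in the second bullet with exactly two classes swapped by a normalizer of $K^\times$, and that $k'<k$ prevents collapsing them) is exactly the verification the paper leaves implicit.

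For $nc_1=0$ the paper simply cites \cite{Gross}, Propositions~3.2 and~3.4, rather than arguing directly.  Your attempt to reprove this case has two soft spots.  First, when $c_1=0$ the lemma asserts uniqueness among \emph{all} orders of discriminant $n$ with $R\cap K=\CO_K$, not just Eichler orders; invoking Lemma~\ref{p} (which classifies only Eichler orders) does not by itself settle uniqueness, and you would need the statement from \cite{Gross} that any order of the given discriminant optimally containing $\CO_K$ is of the form $\CO_K+I\CO_B$ for a suitable $\CO_K$-ideal $I$ in a maximal order $\CO_B\supset\CO_K$.  Second, your assertion that for $K$ nonsplit, $B$ split, $n\geq 1$ the order is ``not Eichler'' is not quite right: for $K$ ramified and $n=1$ the Iwahori order is Eichler and satisfies $R\cap K=\CO_K$ (this is exactly the third bullet of Lemma~\ref{p} with $k=0$, $e=2$, $m=1$).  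The construction you sketch, $\CO_K+\fp_K^{\,m}\Pi\CO_K$, is the right shape and coincides with Gross's $\CO_K+I\CO_B$, but the uniqueness needs the argument in \cite{Gross} (or the one reproduced later in the paper as Lemma~\ref{order-uniqueness}), not just ``fractional $\CO_K$-ideals are determined up to homothety''.  As a side remark, your observation that for $k=k'=0$ the two representatives in Lemma~\ref{p} are actually $K^\times$-conjugate (via $t^{-1}$) is correct; the paper never applies Lemma~\ref{p} with $k=0$, so this edge case does not arise there.
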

\begin{proof}
If $nc_1 = 0$, this is proved in \cite{Gross}, Propositions 3.2 and
3.4. Now assume that $nc_1\neq 0$, then $B$ is split and one can
apply Lemma \ref{p}.
\end{proof}

Let $R$ be an $\CO$-order of $B$ of discriminant $n$ such that
$R \cap K = \CO_{c_1}$. Such an order $R$ is called admissible for
$(\pi,\chi)$ if the following conditions are satisfied
\begin{enumerate}
  \item If $nc_1 \not=0$ (thus $B$ is split), then
  $R$ is the intersection of two maximal orders $R'$ and $R''$ of $B$ such that
  $R'\cap K = \CO_{c_1}$ and $R''\cap K = \CO_{\max\{0,c_1-n\}}$.
\item If $0 <c_1 < n$, fix an $F$-algebra isomorphism $K \cong F^2$ and
  identify $B$ with $\End_F(K)$. Note that the two $K^\times$-conjugacy
  classes of $\CO$-orders in $B$ satisfying the above condition (1) contain respectively
  the orders $R_i = R'_i\cap R''_i, i=1, 2$ with $R'_1 =R_2'= \End_\CO(\CO_c),$
  $R''_1 = \End_\CO( (\varpi^{n-c},1)\CO_K)$ and $R''_2 = \End_\CO( (1,\varpi^{n-c})\CO_K)$.
  Denote by $\chi_1(a) = \chi(a,1)$ and $\chi_2(b) = \chi(1,b)$. Then $R$ is $K^\times$-conjugate
  to some $R_i$  such that the conductor of $\chi_i$ is $c_1$.
\end{enumerate}

\begin{lem}\label{order-uniqueness}
  If $K$ is nonsplit, $n>0$ and $c=0$, then there is a unique admissible order $R$ for
  $(\pi,\chi)$.
\end{lem}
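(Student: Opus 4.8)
The plan is to reduce the statement to \emph{genuine} uniqueness (Lemma~\ref{lem-order} already gives uniqueness up to $K^\times$-conjugacy) and then to exhibit a single admissible order that is manifestly stable under conjugation by $K^\times$. Since $K$ is nonsplit and $c=0<n$, the integer $c_1$ equals $0$; hence every admissible order $R$ satisfies $R\cap K=\CO_K$, and, because $nc_1=0$, conditions (1)--(2) in the definition of admissibility impose nothing further. So an admissible order is precisely an $\CO$-order of $B$ of discriminant $n$ with $R\cap K=\CO_K$, and by Lemma~\ref{lem-order} (we are not in the excluded range $0<c_1<n$) such orders form a single $K^\times$-conjugacy class. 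As $K^\times$ is abelian, $(sRs^{-1})\cap K=R\cap K$ for every $s\in K^\times$, so $K^\times$-conjugates of admissible orders are again admissible; it therefore suffices to produce one admissible order $R_0$ with $sR_0s^{-1}=R_0$ for all $s\in K^\times$, since the unique conjugacy class will then be $\{R_0\}$.

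To build $R_0$, I would write $B=K\oplus Kj$ with $j\in B^\times$ satisfying $jtj^{-1}=\bar t$ for $t\in K$ and $b:=j^2\in F^\times$; replacing $j$ by $tj$ replaces $b$ by $\RN_{K/F}(t)\,b$, so we may take $v(b)$ minimal, in which case $\CO_K\oplus\CO_K j$ is a maximal order of $B$. For an integer $m\ge 0$ put $R_0:=\CO_K\oplus\fp_K^{\,m}j$. This is an $\CO$-lattice and an $\CO_K$-sub-bimodule of $B$, and it is closed under multiplication because $(\fp_K^m j)(\fp_K^m j)=\fp_K^{2m}\,b\subseteq\CO_K$ as soon as $2m+v(b)\ge 0$; thus $R_0$ is an $\CO$-order, and plainly $R_0\cap K=\CO_K$. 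A case-by-case local computation of the reduced discriminant of $R_0$ — whose exponent is an explicit increasing function of $m$, starting from the exponent of a maximal order — produces, for each $n$ occurring under our hypotheses, a unique $m\le n$ with reduced discriminant $\fp^n$; fix this $m$, so that $R_0$ is an admissible order.

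Finally, for $t\in K^\times$ conjugation by $t$ fixes the summand $K$ pointwise and acts on $Kj$ by $yj\mapsto tyjt^{-1}=y\,(t/\bar t)\,j$; since $v_K(t)=v_K(\bar t)$ we get $t/\bar t\in\CO_K^\times$, whence $(t/\bar t)\fp_K^m=\fp_K^m$ and $tR_0t^{-1}=R_0$. So the unique $K^\times$-conjugacy class of admissible orders contains the $K^\times$-fixed element $R_0$ and is therefore $\{R_0\}$, which is the lemma. \textbf{The one genuinely technical step} is the discriminant bookkeeping in the second paragraph: one must compute $\mathfrak d(\CO_K\oplus\fp_K^m j)$ in each of the four cases ($B$ split or division, $K/F$ unramified or ramified), e.g.\ via $\mathfrak d(R_0)=[\Lambda:R_0]\cdot\mathfrak d(\Lambda)$ for a maximal order $\Lambda\supseteq R_0$, and check that the resulting parity and divisibility constraints on $n$ are exactly those imposed by Lemma~\ref{Tunnell-Satio}(3)--(4) (for instance $n$ even when $K/F$ is unramified and $B$ is split, $n$ odd when $K/F$ is unramified and $B$ is division). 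This is routine but genuinely case-dependent; in the split case the admissible order can instead be read off directly from the classification in Lemma~\ref{p}.
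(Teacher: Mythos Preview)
Your overall strategy---reduce to Lemma~\ref{lem-order} for uniqueness up to $K^\times$-conjugacy, then exhibit a $K^\times$-fixed representative---matches the paper's. The paper takes the Gross-type order $R=\CO_K+I\,\CO_B$ (for a maximal order $\CO_B\supset\CO_K$ and an ideal $I\subset\CO_K$ of the correct length) and verifies $K^\times$-stability by noting $K^\times=F^\times\CO_K^\times$ in the unramified case and checking explicitly that $\varpi_K$ normalizes $R$ in the ramified split case. Your candidate $R_0=\CO_K\oplus\fp_K^{\,m}j$ is genuinely different, and your stability argument via $t/\bar t\in\CO_K^\times$ is clean.

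However, there is a real gap in the ramified case when the residue characteristic is $2$. The reduced discriminant of $R_0=\CO_K\oplus\fp_K^{\,m}j$ is $\fp^{\,v(D_{K/F})+v(b)+fm}$, and the order condition $(\fp_K^m j)^2\subset\CO_K$ forces $m+v(b)\ge 0$ (ramified case, $f=1$), hence $n\ge v(D_{K/F})$ no matter how you choose $j$. So your construction cannot reach discriminants $n<v(D_{K/F})$, which do occur when $K/F$ is wildly ramified (e.g.\ $F=\BQ_2$, $K=\BQ_2(\sqrt{2})$, $v(D_{K/F})=3$, $n=1$). Concretely, in such cases the $K^\times$-stable admissible order is \emph{not} of the form $\CO_K\oplus M$ for any $\CO_K$-lattice $M\subset Kj$: with the paper's embedding one can check that $E_{11}\in R_0(1)$ has $K$-component $(t\varpi_K-2N)/D\notin\CO_K$ once $2\in\fp$. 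Your claim that $\CO_K\oplus\CO_K j$ is maximal for minimal $v(b)$ is likewise false in the ramified case (its reduced discriminant is $\fp^{v(D_{K/F})}\ne\CO$). The remedy is to use the Gross order $\CO_K+I\,\CO_B$ as the $K^\times$-fixed representative; the bookkeeping you flagged is then replaced by the paper's short verification that $\varpi_K$ normalizes $R_0(1)$.
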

\begin{proof}
   Let $\CO_B$ be a maximal order containing $\CO_K$, then by \cite{Gross} (3.3), any
   admissible order for $(\pi,\chi)$ is $K^\times$-conjugate to
   $R := \CO_K + I\CO_B$ where $I$ is a nonzero ideal of $\CO_K$ such that
   $n = \delta(B) + \length_\CO(\CO_K/I)$. If $B$ is nonsplit, then $\CO_B$ is
   invariant under $B^\times$-conjugations and $R$ is unique.
   Assume $B$ is split. As $\CO_K^\times \subset \CO_B^\times$, $\CO_B$ is
   invariant under $F^\times\CO_K^\times$-conjugations.
   In particular, if $K$ is unramified, then $K^\times = F^\times\CO_K^\times$
   and $R$ is unique. Consider the case $K$ is ramified. Then
   $K^\times = F^\times\CO_K^\times \cup \varpi_KF^\times\CO_K^\times$ and
   it reduces to show that $\varpi_K$ normalizes $R$. For this, embed $K$ into
   $B = M_2(F)$ by $\varpi_K \mapsto
   \begin{pmatrix}
     \tr \varpi_K & 1 \\
     -N\varpi_K & 0
   \end{pmatrix}$ and take $\CO_B = M_2(\CO)$. Then $R = \CO_K + \varpi_K^nM_2(\CO)$.
   Note that $R_0(1) = \CO_K + \varpi_KM_2(\CO)$ with $R_0(1) =
   \begin{pmatrix}
     \CO & \CO \\
     \fp & \CO
   \end{pmatrix}$ the Iwahori order in $M_2(F)$. Denote $m$ the maximal integer such that
   $2m \leq n$. Then $R = \CO_K +\varpi^{m-1}\varpi_K R_0(1)$ (resp. $R = \CO_K + \varpi^mR_0(1)$)
   if $n$ is even (resp. $n$ is odd). As $\varpi_K$ normalizes $R_0(1)$, it also
   normalizes $R$ and $R$ is unique.
\end{proof}

In the following,  take an admissible $\CO$-order $R$ of $B$.
Let $U = R^\times$ and define
  \[\gamma := \frac{\Vol(U)}{\Vol(U_0(n))},\]
  where the Haar measure is given so that $\Vol(\GL_2(\CO)) = L(2,1_F)^{-1}|\delta|^2$
  and $\Vol(\CO_B^\times) = L(2,1_F)^{-1}(q-1)^{-1}|\delta|^2$ if $B$ is division.

\begin{lem}\label{gamma}
   If either $R$ is not maximal or $B$ is nonsplit, then
  \[\gamma = L(1,1_F)(1-e(R)q^{-1})\]
  where $e(R)$ is the Eichler symbol of $R$, which is defined as follows. Let
    $\kappa(R) = R/\mathrm{rad}(R)$ with $\mathrm{rad}(R)$ the Jacobson radical of $R$ and let
    $\kappa$ be the residue field of $F$. Then
    \[e(R) =
      \begin{cases}
	1, \quad &\text{if $\kappa(R) = \kappa^2$,} \\
	-1, \quad &\text{if $\kappa(R)$ is a quadratic field extension of $\kappa$}, \\
	0, \quad &\text{if $\kappa(R) = \kappa$}.
    \end{cases}\]
\end{lem}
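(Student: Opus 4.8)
The plan is to compute both volumes $\Vol(U)$ and $\Vol(U_0(n))$ and take their ratio. Since $\gamma$ is a ratio of volumes computed with the same ambient Haar measure, I may rescale freely; it is cleanest to normalize so that the maximal order has volume matching the given convention. First I would record that the reduction modulo the radical gives an exact sequence
\[
1 \lra 1 + \mathrm{rad}(R) \lra R^\times \lra \kappa(R)^\times \lra 1,
\]
so that $\Vol(R^\times) = \Vol(1+\mathrm{rad}(R))\cdot \#\kappa(R)^\times$. The group $1+\mathrm{rad}(R)$ is a pro-$p$ group whose volume depends only on the $\CO$-module structure of $\mathrm{rad}(R)$ inside $R$, hence on the level $n$ (equivalently, the index $[\CO_B : R]$ or $[R:\mathrm{rad}(R)]$ in the division case), but \emph{not} on the Eichler symbol. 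The Eichler-symbol dependence enters only through $\#\kappa(R)^\times$: this is $(q-1)^2$ when $\kappa(R)=\kappa^2$, $q^2-1$ when $\kappa(R)$ is the quadratic field extension, and $q(q-1)$ when $\kappa(R)=\kappa$ (the last because $\kappa(R)$ is then $2$-dimensional over $\kappa$ with a $1$-dimensional nilpotent-free part — more precisely one checks $\kappa(R)\cong \kappa$ only when $R$ is a maximal order in the division algebra; in the split non-maximal cases $\dim_\kappa \kappa(R) = 2$). In all three cases one verifies directly that
\[
\#\kappa(R)^\times = q^2\cdot L(1,1_F)^{-1}\cdot\bigl(1 - e(R)q^{-1}\bigr)\cdot(1-q^{-1}),
\]
using $L(1,1_F) = (1-q^{-1})^{-1}$; I would simply tabulate the three cases and confirm the identity.

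Next I would pin down the common factor. For $U_0(n)$ one has the standard computation $\Vol(U_0(n)) = \Vol(\GL_2(\CO))\cdot [\GL_2(\CO):U_0(n)]^{-1} = L(2,1_F)^{-1}|\delta|^2 \cdot (q^{n-1}(q+1))^{-1}$ for $n\geq 1$. The Iwahori order $U_0(1)$ itself has Eichler symbol $1$, and indeed the formula predicts $\gamma = L(1,1_F)(1-q^{-1}) = 1$ there, consistent with $U=U_0(1)$; this is a useful sanity check I would include. For a general admissible $R$, by Lemma \ref{lem-order} and Lemma \ref{p} the order $R$ is an Eichler order (or, in the division case, the specific order $\CO_K + I\CO_B$ from Lemma \ref{order-uniqueness}), and the index $[\CO_B:R]$ — hence $\Vol(1+\mathrm{rad}(R))$ relative to the maximal order — is governed by $\delta(B)$ and the level $n$ in a way independent of $e(R)$. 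Concretely, $\Vol(1+\mathrm{rad}(R)) = \Vol(\GL_2(\CO))\cdot q^{-?}\cdot(\text{power of }q)$ with the exponent depending only on $n$; matching against $U_0(n)$ cancels this dependence and leaves exactly the factor $(1-e(R)q^{-1})$ times $L(1,1_F)$.

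The main obstacle is the bookkeeping of the two division-algebra cases: when $B$ is nonsplit I must use the convention $\Vol(\CO_B^\times) = L(2,1_F)^{-1}(q-1)^{-1}|\delta|^2$, work with the valuation ring $\CO_B$ whose residue field is $\kappa^2$ (the quadratic extension of $\kappa$), so that the maximal order has Eichler symbol $-1$, and verify that $\gamma = L(1,1_F)(1+q^{-1}) = 1$ there too — matching the chosen normalization of $\Vol(\CO_B^\times)$, which is precisely the point of the factor $(q-1)^{-1}$. I would then handle the general admissible $R = \CO_K + \varpi^m R_0(1)$ or $\CO_K + I\CO_B$ by the same radical exact sequence, noting that $\mathrm{rad}(R)$ and $\kappa(R)$ are computed in the proofs of Lemmas \ref{order-uniqueness} and \ref{p}. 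Everything else is a finite check over the possible types $(e(R)\in\{1,-1,0\},\ B\text{ split or not})$; the identity to be verified in each case is a one-line manipulation of geometric-series factors, so I would not write it all out but would tabulate the outcomes and invoke the uniqueness/classification lemmas to know these are the only cases.
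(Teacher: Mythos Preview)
Your approach via the exact sequence $1 \to 1+\mathrm{rad}(R) \to R^\times \to \kappa(R)^\times \to 1$ is essentially the same idea as the paper's (which quotes the index formula $[R_0^\times:R^\times]/[R_0:R] = (|\kappa(R_0)^\times|/|\kappa(R)^\times|)\cdot(|\kappa(R)|/|\kappa(R_0)|)$ and then evaluates the four quantities case by case). However, your execution contains genuine errors that would prevent the argument from closing.

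First, when $e(R)=0$ the definition says $\kappa(R)=\kappa$, which is \emph{one}-dimensional over $\kappa$, so $\#\kappa(R)^\times = q-1$, not $q(q-1)$. Your parenthetical (``$\kappa(R)$ is then $2$-dimensional \ldots one checks $\kappa(R)\cong\kappa$ only when $R$ is a maximal order in the division algebra'') contradicts the hypothesis $e(R)=0$ itself; and such orders do arise among the admissible ones (e.g.\ $K/F$ ramified, $n\geq 2$, $c<n$). Correlatively, your claim that $\Vol(1+\mathrm{rad}(R))$ depends only on the level $n$ and not on $e(R)$ is false: one has $[R:\mathrm{rad}(R)] = q^2$ when $e(R)=\pm1$ but $[R:\mathrm{rad}(R)] = q$ when $e(R)=0$, so at fixed $[R_0:R]$ the additive index $[R_0:\mathrm{rad}(R)]$ (hence the volume of $1+\mathrm{rad}(R)$) shifts by a factor of $q$. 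The paper's proof avoids this pitfall by tracking both $|\kappa(R)|$ and $|\kappa(R)^\times|$ separately, and the ratio $|\kappa(R)|/|\kappa(R)^\times| = (1-q^{-1})^{-1}(1-e(R)q^{-1})^{-1}$ is what makes the formula uniform.

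Second, your division-algebra sanity check is wrong: for $R=\CO_B$ maximal (so $n=1$, $e(R)=-1$) you assert $\gamma = L(1,1_F)(1+q^{-1}) = 1$, but $L(1,1_F)(1+q^{-1}) = (q+1)/(q-1) \neq 1$. Indeed the normalization $\Vol(\CO_B^\times) = L(2,1_F)^{-1}(q-1)^{-1}|\delta|^2$ and $\Vol(U_0(1)) = L(2,1_F)^{-1}|\delta|^2/(q+1)$ give exactly $\gamma = (q+1)/(q-1)$. Finally, your displayed identity $\#\kappa(R)^\times = q^2 L(1,1_F)^{-1}(1-e(R)q^{-1})(1-q^{-1})$ fails in all three cases (e.g.\ for $e(R)=1$ the right side is $(q-1)^3/q$, not $(q-1)^2$); presumably one of the two factors $(1-q^{-1})$ is spurious, but even with that fix the $e(R)=0$ case still does not match.
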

\begin{proof}
 Let $R_0$ be a maximal order of $B$ containing $R$. Then we have the following  formula (for example, see \cite{Yu}):
   $$\frac{[R_0^{\times}: R^\times]}{[R_0: R]} = \frac{|\kappa(R_0)^\times|/|\kappa(R^\times)|}{|\kappa(R_0)|/\kappa(R)|}.$$
  If $B$ is split and $R$ is not maximal, then
  \[ [R_0:R] = q^n, \quad \frac{|\kappa(R_0)^\times|}{|\kappa(R_0)|}
   = (1-q^{-2})(1-q^{-1}),
  \quad \frac{|\kappa(R)|}{|\kappa(R)^\times|} = (1-q^{-1})^{-1}(1-e(R)q)^{-1},\]
  while if $B$ is division, then
  \[ [R_0:R] =  q^{n-1}, \quad \frac{|\kappa(R_0)^\times|}{|\kappa(R_0)|}
   = 1-q^{-2},
  \quad \frac{|\kappa(R)|}{|\kappa(R)^\times|} = (1-q^{-1})^{-1}(1-e(R)q)^{-1}.\]
  Summing up,
  \[ [R_0^{\times}: R^\times] =
  (q-1)^{-\delta(B)} q^n (1-q^{-2})(1-e(R)q^{-1})^{-1},\]
    where $\delta(B) = 0$ (resp. $1$) if $B$ is split (resp. ramified). Thus
    \[
    \begin{aligned}
      \gamma^{-1} &= \frac{\Vol(U_0(n))}{\Vol(U)} = \frac{\Vol(\GL_2(\CO))}
      {\Vol(R_0^\times)}\frac{[R_0^\times:U]}{[\GL_2(\CO):U_0(n)]} \\
    &= \frac{L(2,1)^{-1}}{(q-1)^{-\delta(B)}L(2,1)^{-1}}
    \frac{(q-1)^{-\delta(B)}q^n(1-q^{-2})(1-e(R)q^{-1})^{-1}}{q^n(1-q^{-2})(1-q^{-1})^{-1}} \\
  &= L(1,1_F)^{-1}(1-e(R)q^{-1})^{-1}.
  \end{aligned}
  \]
\end{proof}

\subsection{Test Vector Spaces}
\begin{defn}
Define  $V(\pi,\chi)\subset \pi$ to be the subspace of vectors $f$ satisfying the following condition:
\begin{itemize}
  \item for nonarchimedean $F$,  $K$ split or $c \geq n$,  let $U\subset G$ be the compact subgroup
    defined before Lemma \ref{gamma}, then $f$ is  $\omega$-eigen under $U$. Here,  write $U = (U\cap Z) U'$
       such that $U' = U$ if $cn=0$ and  $U'\cong U_1(n)$ otherwise, and view $\omega$ as a
       character on $U\cap Z$ and extends to $U$ by making it trivial on $U'$;
    \item for nonarchimedean $F$, $K$ nonsplit and $c < n$, $f$ is
    $\chi^{-1}$-eigen under the action of $K^\times$;
   \item for archimedean $F$,  let $U$ be a maximal compact subgroup of $G$ such that $U\cap K^\times$
     is the maximal compact subgroup of $K^\times$, then $f$ is
     $\chi^{-1}$-eigen under $U\cap K^\times$ with weight minimal.
\end{itemize}
\end{defn}

\begin{prop}\label{local-multi-one}
  The dimension of $V(\pi,\chi)$ is one and
  any nonzero vector in $V(\pi,\chi)$ is a test vector for $\CP(\pi,\chi)$.
\end{prop}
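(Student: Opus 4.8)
The plan is to prove Proposition \ref{local-multi-one} by separating the local analysis into cases according to the behaviour of $K$ at the place $v$, matching the three clauses in the definition of $V(\pi,\chi)$, and in each case exhibiting an explicit nonzero vector whose toric integral $\beta$ is visibly non-vanishing, then invoking the Tunnell--Saito uniqueness (dimension $\leq 1$ of $\CP(\pi,\chi)$) together with the Waldspurger dichotomy ($\CP(\pi,\chi)\neq 0 \iff \beta$ nontrivial, and in that case $\beta$ generates $\CP(\pi,\chi)\otimes\CP(\tilde\pi,\chi^{-1})$) to deduce both that $\dim V(\pi,\chi)\leq 1$ and that any nonzero vector in it is a test vector. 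So the real content is the construction of an explicit test vector lying in $V(\pi,\chi)$, plus the verification that $V(\pi,\chi)$ is at most one-dimensional.

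First I would dispose of the easy directions. For the archimedean case, the condition defining $V(\pi,\chi)$ is exactly the ``new vector'' condition relative to the torus $U\cap K^\times$, and by property (1) of new vectors recalled in the excerpt (with $\chi$ in place of $\omega_s^{-1}$, using essential unitarity to reduce to the unitary case) any nonzero such vector is a test vector; the minimal-weight condition pins down a line, giving $\dim = 1$. For nonarchimedean $F$ with $K$ split or $c\geq n$, the vector is an eigenvector under $U=R^\times$ transforming by $\omega$; here one uses that $R$ is admissible, so $R\cap K=\CO_{c_1}$ with $c_1=c$, and the two-maximal-order description from Lemma \ref{lem-order} together with the conductor condition on $\chi_i$ in the definition of admissibility. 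One translates the $U$-eigen condition into a condition on the Whittaker/Kirillov model (essentially: $f$ is the normalized new vector of a suitable twist, transplanted to $B=M_2(F)$ in the $K$-split case), and then the toric integral $\int_{F^\times\bs K^\times}\langle\pi(t)f,\bar f\rangle\chi(t)\,dt$ is computed directly — it is a sum of a geometric-type series in $q$ that one checks is nonzero. The two-$K^\times$-conjugacy-class phenomenon when $0<c_1<n$ is precisely why the admissibility condition (2) selects the order for which $\chi_i$ has conductor $c_1$; on the wrong order the integral would vanish, and this is the subtle bookkeeping point. Uniqueness of the line follows from the Tunnell--Saito bound applied to $\CP(\pi,\chi)$ once one knows $V(\pi,\chi)$ maps injectively into (or generates inside) that Hom-space, which is clear since a $U$-eigenvector that is annihilated by a nonzero element of $\CP$ would force $\beta(f)=0$.

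The genuinely delicate case is nonarchimedean $F$ with $K$ nonsplit and $c<n$, where $c_1=0$, $R$ is the unique (by Lemma \ref{order-uniqueness}, or its analogue allowing $c>0$) admissible order with $R\cap K=\CO_K$, and $V(\pi,\chi)$ consists of vectors that are $\chi^{-1}$-eigen under the full group $K^\times$. Here the defining condition is itself asking for a nonzero element in the line underlying $\CP(\pi,\chi)$ restricted to a model — i.e. we want $f$ with $\pi(t)f=\chi^{-1}(t)f$ for all $t\in K^\times$, which is a much stronger, ``quaternionic toric newvector'' condition. The plan is: (a) show existence of such $f$ by a dimension count — realize $\pi$ (or $\sigma$ via Jacquet--Langlands) and decompose its restriction to $K^\times$, using that $\CP(\pi,\chi\cdot\nu)\neq 0$ for exactly the characters $\nu$ of the appropriate conductor compatible with the local root number, i.e. the local multiplicity-one/branching results of Tunnell--Saito and Gross--Prasad, possibly packaged through the Gross ``linkage'' between $B$-side and $\GL_2$-side new vectors; (b) show this $f$ is automatically a test vector because $\pi(t)f=\chi^{-1}(t)f$ makes the toric integral $\int_{F^\times\bs K^\times}\chi^{-1}(t)\langle f,\bar f\rangle\chi(t)\,dt=\Vol(F^\times\bs K^\times)\langle f,\bar f\rangle\neq 0$ outright; and (c) show uniqueness, i.e. the space of $\chi^{-1}$-eigenvectors for $K^\times$ in $\pi$ is one-dimensional — this is the hardest part and is exactly Tunnell--Saito/Saito local multiplicity one for the torus $K^\times$ acting on $\pi$. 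I expect step (c), establishing that $\dim\Hom_{K^\times}(\pi,\chi^{-1})\leq 1$ forces the eigenspace itself to be a line (one has to rule out that distinct vectors in the eigenspace give the same functional, using nondegeneracy of the invariant pairing), to be the main obstacle; it should follow by pairing against $\tilde\pi$ and using that $\CP(\tilde\pi,\chi)$ is also one-dimensional, so that the eigenvector is determined up to scalar by its image under the unique functional. I would organize the write-up so that the three bullets of the definition are handled in three short lemmas, with the nonsplit $c<n$ case carrying the bulk of the argument and citing \cite{Wal}, \cite{Tunnell2}, \cite{Sat}, \cite{Gross} for the branching and multiplicity-one inputs.
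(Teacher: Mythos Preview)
Your proposal has the overall structure right but inverts the difficulty of the two nonarchimedean cases and, more seriously, contains a circular argument for the $U$-eigen case.

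For nonarchimedean $F$ with $K$ split or $c\geq n$, you propose to deduce $\dim V(\pi,\chi)\leq 1$ from the Tunnell--Saito bound on $\CP(\pi,\chi)$ via an injection $V(\pi,\chi)\hookrightarrow\CP(\pi,\chi)$. But there is no such injection: $V(\pi,\chi)$ is a subspace of $\pi$, not of $\Hom_{K^\times}(\pi,\chi^{-1})$. What you actually argue is that a nonzero $\ell\in\CP(\pi,\chi)$ restricts to an injective linear map $V(\pi,\chi)\to\BC$; that injectivity is exactly the statement that every nonzero $f\in V(\pi,\chi)$ is a test vector, which is the conclusion you are trying to reach. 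Your explicit $\beta$-computation is carried out only for the single newvector $W_0$, not for an arbitrary element of a potentially higher-dimensional $V(\pi,\chi)$, so the argument does not close. The paper's route here is the straightforward one you should take instead: since $R$ has discriminant $n$ equal to the conductor of $\sigma$ and $B$ is split, $U=R^\times$ is $\GL_2(F)$-conjugate to $U_0(n)$, and the $\omega$-eigen condition under $U$ is (after this conjugation) precisely the newvector condition. Casselman's theorem then gives $\dim V(\pi,\chi)=1$ directly, with no reference to Tunnell--Saito.

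Conversely, the case you flag as ``genuinely delicate'' --- nonarchimedean, $K$ nonsplit, $c<n$ --- is in fact the easiest. Since $K$ is a field, $K^\times/F^\times$ is compact, so $\pi$ (with its fixed central character) decomposes as a $K^\times$-module into a direct sum of characters. Hence the $\chi^{-1}$-eigenspace in $\pi$ has dimension equal to $\dim\Hom_{K^\times}(\pi,\chi^{-1})=\dim\CP(\pi,\chi)$, which is exactly one by Tunnell--Saito together with the standing hypothesis $\epsilon(\pi,\chi)=\chi\eta(-1)\epsilon(B)$. Your step (b) is correct and is all that is needed for the test-vector assertion here: $\beta^0=\Vol(F^\times\bs K^\times)\neq 0$. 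The elaborate program you sketch for existence and uniqueness in this case is unnecessary.
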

\begin{proof}
  If $F$ is nonarchimedean, the claim that $\dim V(\pi,\chi) =1$ follows from local new form theory
  \cite{Cass}. Assume $F$ is archimedean. If $K$ is nonsplit, then $V(\pi,\chi)$ is the $\chi^{-1}$-eigen
  line. If $K$ is split, without loss of generality, embed $K^\times$ into $G\cong \GL_2(F)$ as the
  diagonal matrices and decompose $K^\times = F^\times K^1$ such that the image of $K^1$ in $G$ is
  $\left\{
    \begin{pmatrix}
      * & \\
        & 1
    \end{pmatrix}
  \right\}$. Then $V(\pi,\chi)$ is the new vector line for $\pi\otimes\chi_1$ with
  $\chi_1 := \chi|_{K^1}$.

  We  shall prove any nonzero vector in
  $V(\pi,\chi)$ is a test vector in next subsection by computing the toric integral $\beta$.
\end{proof}

\begin{prop}\label{local-multi-one-2}
    Assume $K/F$ is a quadratic extension of nonarchimedean fields with $n > 0$ and $c=0$.
    Then $V(\pi,\chi) \subseteq \pi^{R^\times}$ and $\dim \pi^{R^\times}
     = \dim \pi^{\CO_K^\times} \leq 2$. The dimension of $\pi^{R^\times}$ is one precisely when
     $K/F$ is inert or $K/F$ is ramified and $\epsilon(\pi,\chi_1) \not=
     \epsilon(\pi,\chi_2)$ where $\chi_i, i=1, 2,$ are unramified characters of $K^\times$ with
     $\chi_i|_{F^\times}\cdot \omega = 1$.
\end{prop}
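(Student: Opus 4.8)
The plan is to sandwich $\dim\pi^{R^\times}$ between two copies of $\dim\pi^{\CO_K^\times}$. Since $\CO_K^\times\subseteq R^\times$ the inclusion $\pi^{R^\times}\subseteq\pi^{\CO_K^\times}$ is free, so I would be left with: (i) computing $\dim\pi^{\CO_K^\times}$ together with the stated $1$-versus-$2$ dichotomy, and (ii) producing enough $R^\times$-fixed vectors to force the reverse inequality $\dim\pi^{R^\times}\ge\dim\pi^{\CO_K^\times}$. The one preliminary observation I would record first is that under the hypotheses $n>0$, $c=0$, $K$ nonsplit, the admissibility conditions impose no constraint involving $\chi$ (they only bite in the regime $0<c_1<n$, whereas here $c_1=0$), so by Lemma \ref{order-uniqueness} there is a \emph{single} order $R=\CO_K+I\CO_B$, with $\CO_B\supseteq\CO_K$ a maximal order and $I=\varpi_K^{\,m}\CO_K$, $m=n-\delta(B)$, which is admissible for $(\pi,\chi')$ for \emph{every} unramified character $\chi'$ of $K^\times$ with $\chi'|_{F^\times}\cdot\omega=1$; moreover $R^\times=\CO_K^\times\cdot(1+\varpi_K^{\,m}\CO_B)$ when $m\ge 1$.

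For (i): $\pi^{\CO_K^\times}$ is stable under $K^\times$ (abelian, so it normalizes $\CO_K^\times$), and on this subspace $\pi(\varpi_K)^e=\omega(\varpi)\cdot\mathrm{id}$, where $e\in\{1,2\}$ is the ramification index of $K/F$, because $\varpi_K^{\,e}=\varpi\cdot u$ with $u\in\CO_K^\times$. Hence $\pi(\varpi_K)|_{\pi^{\CO_K^\times}}$ has separable minimal polynomial dividing $X^e-\omega(\varpi)$, so it is semisimple with at most $e$ eigenvalues; thus $\pi^{\CO_K^\times}=\bigoplus_\mu\pi_\mu$ over the unramified characters $\mu$ of $K^\times$ with $\pi_\mu\ne0$. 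A nonzero vector of $\pi_\mu$ forces $\mu|_{F^\times}=\omega$ by the central character, leaving exactly one candidate $\mu$ if $K/F$ is inert and exactly two, $\mu=\chi_1^{-1},\chi_2^{-1}$, if $K/F$ is ramified. By the Tunnell–Saito multiplicity-one theorem (\cite{Tunnell2},\cite{Sat}) each $\dim\pi_\mu\le1$, and the nonvanishing of $\pi_\mu$ is governed by the $\epsilon$-dichotomy for the pair $(\pi,\mu)$; since $\mu$ is unramified the relevant sign $\mu\eta(-1)\epsilon(B)$ is the same for both candidates. As our standing hypothesis $\epsilon(\pi,\chi)=\chi\eta(-1)\epsilon(B)$ says one candidate passes this test, $\dim\pi^{\CO_K^\times}=1$ when $K/F$ is inert, while in the ramified case it is $2$ precisely when $\epsilon(\pi,\chi_1)=\epsilon(\pi,\chi_2)$ (both pass) and $1$ when $\epsilon(\pi,\chi_1)\ne\epsilon(\pi,\chi_2)$ — exactly the asserted dichotomy, and in particular $\dim\pi^{\CO_K^\times}\le2$.

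For (ii) the one input I need is the inclusion $V(\pi,\chi')\subseteq\pi^{R^\times}$ for each unramified $\chi'$ with $\chi'|_{F^\times}\omega=1$ and $\epsilon(\pi,\chi')=\chi'\eta(-1)\epsilon(B)$. Here $V(\pi,\chi')$ is the $\chi'^{-1}$-eigenline $\pi_{\chi'^{-1}}$, automatically fixed by $\CO_K^\times$ (as $\chi'$ is unramified) and by the center; so, writing $R^\times=\CO_K^\times\cdot(1+\varpi_K^{\,m}\CO_B)$ with $m\ge1$, the claim reduces to showing that this toric eigenline is fixed by the pro-$p$ group $1+\varpi_K^{\,m}\CO_B$. \emph{This reduction is the technical heart of the statement}: it is exactly the invariance property that the notion of admissible order is designed to guarantee, and I would deduce it from the explicit description of the local toric test vector in the principal-series, special, and supercuspidal cases — the same case analysis (Kirillov/Whittaker models, resp.\ the constructions of \cite{Cass},\cite{Gross} §3) that underlies Proposition \ref{local-multi-one}. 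The degenerate case $m=0$ is immediate separately: it forces $n=1$ and $B$ division, so $\sigma=\mathrm{sp}(2)\otimes\mu$ with $\mu$ unramified and $\pi=\mu\circ N_{B/F}$ one-dimensional, whence $R^\times=\CO_B^\times$ acts trivially and $\pi^{R^\times}=\pi=\pi^{\CO_K^\times}$.

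Granting the inclusion, I would conclude as follows. Apply it to each $\chi_i$ ($i=1,2$ in the ramified case, or the single relevant $\chi$ when $K/F$ is inert) that passes the $\epsilon$-test, using that the order $R$ of the first paragraph serves all of them: the lines $V(\pi,\chi_i)=\pi_{\chi_i^{-1}}$ are distinct (they are eigenspaces for distinct characters of $K^\times$) and lie in $\pi^{R^\times}$, so $\dim\pi^{R^\times}\ge\dim\pi^{\CO_K^\times}$. Together with $\pi^{R^\times}\subseteq\pi^{\CO_K^\times}$ this yields $\pi^{R^\times}=\pi^{\CO_K^\times}$, hence both the equality of dimensions and, taking $\chi'=\chi$, the inclusion $V(\pi,\chi)\subseteq\pi^{R^\times}$. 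Everything apart from the third paragraph is formal once Tunnell–Saito multiplicity one and the $\epsilon$-dichotomy are granted; the genuine obstacle is the invariance of the toric test vector under $1+\varpi_K^{\,m}\CO_B$.
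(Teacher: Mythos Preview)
Your sandwich strategy is sound and your part (i) is correct: the decomposition $\pi^{\CO_K^\times}=\bigoplus_\mu \pi_\mu$ together with Tunnell--Saito yields exactly the stated $1$-versus-$2$ dichotomy, and the trivial inclusion $\pi^{R^\times}\subseteq\pi^{\CO_K^\times}$ gives one direction. This is also how the paper organizes things.

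The genuine gap is your step (ii). You correctly isolate the problem as showing that each $K^\times$-eigenline is fixed by $1+I\CO_B$, but your proposed method --- ``the explicit description of the local toric test vector'' via case analysis --- does not exist for supercuspidal $\pi$. There is no formula for the $K^\times$-eigenvector in a supercuspidal that would let you check $(1+I\CO_B)$-invariance by hand; Tunnell--Saito only tells you the eigenline exists. (Your references are also off: \cite{Cass} concerns $U_1(n)$-new vectors, not toric eigenvectors, and \cite{Gross} \S3 classifies orders, not test vectors; Proposition~\ref{local-multi-one} does no such case analysis either.) For principal series and special representations, and for division $B$, the result is in \cite{Gross} and \cite{GP} as the paper notes, so your plan would go through there; but the supercuspidal case on $\GL_2(F)$ is precisely the case the paper must prove afresh.

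The paper's route around this is indirect and avoids any explicit test vector. Setting $\CK(r)=(1+\varpi^r R_0)\cap\GL_2(\CO)$ and $\CE(r)=\CK(r)\cap K^\times$, it proves (Proposition~\ref{prasad-lemma}) that $\pi^{\CK(r)}=\pi^{\CE(r)}$ for all $r\ge m:=\lfloor n/2\rfloor$ by a dimension count: Tunnell--Saito gives $\dim\pi^{\CE(r)}+\dim\pi_D=\#\CX(r)$, and an explicit Kirillov-model computation gives a closed formula for $\dim\pi^{\CK(r)}$ which, combined with Carayol's formula for $\dim\pi_D$, shows $\dim\pi^{\CK(r)}+\dim\pi_D=\#\CX(r)$ as well. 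Since $R^\times=\CO_K^\times\CK(m)$ (with a small variant when $K$ is ramified and $n$ is even, handled separately), this forces $\pi^{R^\times}=\pi^{\CO_K^\times}$ without ever locating the eigenvector. That counting argument is the missing ingredient in your proposal.
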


  The proof of this proposition is refered to \cite{Gross} and \cite{GP}  except for the case that
  $\pi$ is a supercuspidal representation on $G = \GL_2(F)$.  For this case, the proof in \cite{Gross}, \S 7
  is based on a character formula for odd residue characteristic.
  We next prove this case with arbitrary residue characteristic.

  Let  $R_0 = M_2(\CO)$ if $e=1$ and  the Iwahori order $
  \begin{pmatrix}
    \CO & \CO \\
    \fp & \CO
  \end{pmatrix}$ if $e=2$. Fix an embedding of $K$ into $M_2(F)$ such that
  $R_0 \cap K = \CO_K$. Consider the following filtration of open compact subgroups of
  $G$ and $K^\times$:
  \[\CK(r) := (1+\varpi^r R_0) \cap \GL_2(\CO), \quad \CE(r) := \CK(r)\cap K^\times, \quad r \geq 0.\]
  Denote $m$ the minimal integer such that $2m + 1 \geq n$. The proof is based on the following proposition.

  \begin{prop}\label{prasad-lemma}
  For any integer $r \geq m$, $\pi^{\CK(r)} = \pi^{\CE(r)}.$
\end{prop}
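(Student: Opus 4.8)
The plan is to prove the non‑trivial inclusion $\pi^{\CE(r)}\subseteq\pi^{\CK(r)}$; the reverse is automatic since $\CE(r)=\CK(r)\cap K^\times$. The first step is to trivialise $\CK(r)$ relative to $K$. Because $R_0\cap K=\CO_K$, the $\CO_K$-bimodule $R_0$ is free of rank one over $\CO_K$ with $1$ part of a basis, and one checks the complementary basis vector $\zeta$ may be chosen in the normaliser of $K^\times$ in $\GL_2(F)$, so that $R_0=\CO_K\oplus\CO_K\zeta$ with $\zeta t\zeta^{-1}=\bar t$ for $t\in K$. An elementary multiplication then shows that $(\alpha,\gamma)\mapsto(1+\varpi^r\alpha)\,g_\gamma$, where $g_\gamma:=1+\varpi^r\gamma\zeta$, is a bijection $\CO_K\times\CO_K\to\CK(r)$; equivalently $\CK(r)=\CE(r)\cdot\CZ(r)$ with $\CZ(r):=\{g_\gamma\mid\gamma\in\CO_K\}$. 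Hence a vector $v\in\pi^{\CE(r)}$ lies in $\pi^{\CK(r)}$ \emph{if and only if} $\pi(g_\gamma)v=v$ for every $\gamma\in\CO_K$, the $\CE(r)$-factor being harmless.

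Next I would bring in the torus. Since $\CO_K^\times\subseteq R_0^\times$ and $R_0^\times$ preserves the two‑sided ideal $\varpi^r R_0$, the group $\CO_K^\times$ normalises both $\CE(r)$ and $\CK(r)$; thus it acts on $\pi^{\CE(r)}$ and $\pi^{\CK(r)}$, the averaging projector $e_{\CK(r)}=\int_{\CK(r)}\pi(k)\,dk\colon\pi^{\CE(r)}\twoheadrightarrow\pi^{\CK(r)}$ is $\CO_K^\times$-equivariant, and $t\,g_\gamma\,t^{-1}=g_{(t/\bar t)\gamma}$ for $t\in\CO_K^\times$. By Lemma \ref{Tunnell-Satio} together with Waldspurger's multiplicity‑one theorem for $\pi|_{K^\times}$, each character $\chi$ of $K^\times/\CE(r)$ with $\chi|_{F^\times}=\omega$ occurs in $\pi$ with multiplicity at most one, and only finitely many occur; so $\pi^{\CE(r)}=\bigoplus_\chi\pi[\chi]$ is finite‑dimensional and splits into $\CO_K^\times$-eigenlines. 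It therefore suffices to prove $\dim\pi^{\CK(r)}=\dim\pi^{\CE(r)}$, i.e.\ that $e_{\CK(r)}$ kills no $\CO_K^\times$-eigenline of $\pi^{\CE(r)}$.

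Finally, the remaining dimension identity is where the hypothesis $2r+1\ge n$ enters, and it is the heart of the matter. I would compute $\dim\pi^{\CK(r)}$ using the fact that every supercuspidal representation of $\GL_2(F)$ is compactly induced, $\pi=\mathrm{c}\text{-}\Ind_J^{\GL_2(F)}\lambda$ from an open subgroup $J$ compact modulo the centre — a description valid with \emph{no} restriction on the residue characteristic, which is exactly why this case must be argued separately from \cite{Gross}, \cite{GP}. Mackey's formula gives $\dim\pi^{\CK(r)}=\sum_{g\in J\backslash\GL_2(F)/\CK(r)}\dim\lambda^{\,J\cap g\CK(r)g^{-1}}$, and $2r+1\ge n$ says $\CK(r)$ is at least as deep as the level of $\lambda$, forcing every double coset not meeting $N_{\GL_2(F)}(K^\times)\GL_2(\CO)$ to contribute $0$; the surviving terms, counted by conductors of characters of $\CO_K^\times/\CE(r)$ — separating the inert case ($e=1$, $R_0=M_2(\CO)$) from the ramified case ($e=2$, $R_0$ the Iwahori order) exactly as in the analysis of admissible orders — match the multiplicity‑one count of $\dim\pi^{\CE(r)}$ from the previous step. \textbf{The main obstacle} is to make this Mackey bookkeeping uniform over \emph{all} supercuspidals: in residue characteristic $2$ the datum $\lambda$ need not be induced from a character of a subgroup of the tame shape $E^\times(1+\fp_E^{\,j})$, so the character‑formula argument of \cite{Gross}, \S 7 cannot be invoked and one must argue directly with the abstract compact‑induction datum while tracking the vanishing of the off‑diagonal double cosets; handling the ramified‑$K$ case, where the combinatorics of the optimal embedding $\CO_K\hookrightarrow R_0$ underlying the first step is the more intricate one, is the delicate part.
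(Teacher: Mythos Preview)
Your overall strategy --- reduce to a dimension count, and use Tunnell--Saito multiplicity one to express $\dim\pi^{\CE(r)}$ as a count of characters --- matches the paper's. The gap is in the other half of the count: you propose to compute $\dim\pi^{\CK(r)}$ via Mackey theory for the compact-induction realization $\pi=\mathrm{c}\text{-}\Ind_J^{\GL_2(F)}\lambda$, but you do not carry this out, and you yourself identify the uniform bookkeeping over all supercuspidal types (especially in residue characteristic $2$) as ``the main obstacle.'' That obstacle is real: without a concrete description of $(J,\lambda)$ one cannot evaluate the summands $\dim\lambda^{J\cap g\CK(r)g^{-1}}$, and in even residue characteristic the relevant types are genuinely more complicated than a character of a tame torus. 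So as written the argument is incomplete at exactly the point where the hypothesis $r\geq m$ must enter.

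The paper avoids this difficulty entirely by computing $\dim\pi^{\CK(r)}$ in a way that never touches the type-theoretic description of $\pi$. First it reduces to $\pi$ \emph{minimal} by twisting (if $\pi$ is not minimal then $n=2m$, $n(\mu)=m$ for the minimizing twist $\mu$, and both $\pi^{\CK(r)}$ and $\pi^{\CE(r)}$ are unchanged by twisting by $\mu$ for $r\geq m$). Then, for minimal $\pi$, it computes $\dim\pi^{\CK(r)}$ directly in the Kirillov model following Casselman: the space $\pi^{\CK(r)}$ is described by explicit support conditions on the Mellin transforms $\hat f(\mu,t)$, and the exponents $n_\mu$ in the functional equation are determined by $n(\pi\otimes\mu)=\max(n,2n(\mu))$ (using minimality). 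This yields a closed formula for $\dim\pi^{\CK(r)}$ in each of the four cases ($n$ even/odd, $e=1,2$), valid in all residue characteristics. On the other side, $\dim\pi^{\CE(r)}$ is obtained not just from multiplicity one but by pairing it with the Jacquet--Langlands transfer: since $\CE(r)\subset 1+\varpi_D^{n-1}\CO_D$ for $r\geq m$, Tunnell--Saito gives $\dim\pi^{\CE(r)}+\dim\pi_D=\#\CX(r)$, and Carayol's formula for $\dim\pi_D$ supplies the missing term. Comparing the two explicit formulas gives the equality. Your decomposition $\CK(r)=\CE(r)\cdot\CZ(r)$ and the $\CO_K^\times$-equivariance of the projector, while correct, are not needed for this argument.
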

\begin{proof}
  Firstly, note that it is enough to prove Proposition \ref{prasad-lemma}
  for the case $\pi$ is minimal, that is, $\pi$ has minimal conductor among its twists.
  In fact, assume $\pi$ is not minimal. Denote $n_0$ the minimal conductor of $\pi$.
  Take a character $\mu$ so that $\pi_0 := \pi \otimes \mu$ has conductor $n_0$.
  Then by \cite{Tunnell1}, Proposition 3.4,
  $n_0 \leq \max(n,2n(\mu))$ with equality if
  $\pi$ is minimal or $n \not= 2n(\mu)$.
  In particular, $n = 2m$ with $n(\mu) = m$. Hence, for any $r \geq m$,
  $\pi^{\CK(r)} = \pi_0^{\CK(r)}, \quad \pi^{\CE(r)} = \pi_0^{\CE(r)}$.
  Note that $r \geq n_0/2$ and one then can apply the proposition for the minimal representation $\pi_0$.

  Assume $\pi$ is minimal in the following. As $\CK(r) \supset \CE(r)$,  $\pi^{\CK(r)} \subset \pi^{\CE(r)}$.
  It remains to prove that $\pi^{\CK(r)}$ and $\pi^{\CE(r)}$ have the same dimension.  Denote
 $\pi_D$ the representation on $D^\times$ where $D$ is the division quoternion algebra over $F$
so that the Jacquet-Langlands lifting of $\pi_D$ to $G$ is $\pi$. Then $\pi_D$ has conductor $n$, that is,
$\pi_D^{1+\varpi_D^{n-1}\CO_D} = \pi_D$ and $\pi_D^{1+\varpi_D^{n-2}\CO_D} = 0$ where $\varpi_D$ is
a uniformizer of $D$. Moreover, by \cite{Ca}, Proposition 6.5,
\[\dim \pi_D =
  \begin{cases}
    2q^{m-1}; \quad &\text{if $n$ is even}; \\
    q^{m} + q^{m -1}; \quad &\text{if $n$ is odd}.
  \end{cases}
\]
Note that for any $r \geq m$, $\CE(r) \subset (1+\varpi_D^{n-1}\CO_D) \cap \CO_K^\times$.
Therefore, by Tunnell-Saito's theorem, if we denote $\CX(r)$ the set
of all the characters $\mu$ on $K^\times$ such that
$\mu|_{F^\times}\omega =1$ and $\mu|_{\CE(r)} = 1$, then
\[
  \begin{aligned}
 \dim \pi^{\CE(r)} + \dim \pi_D
  &= \sum_{\mu \in \CX(r)} \dim \pi^\mu + \sum_{\mu} \dim \pi_D^\mu
  = \sum_{\mu \in \CX(r)} (\dim \pi^\mu + \dim \pi_D^\mu) = \# \CX(r)
  \end{aligned}
\]
and on the other hand, the lemma below implies that
\[\dim \pi^{\CK(r)} + \dim \pi_D = \# \CX(r),\]
and then the equality $\dim \pi^{\CE(r)}=\dim \pi^{\CK(r)}$ holds.
\end{proof}
\begin{lem}
  Let $\pi$ be minimal.
  For any integer $r \geq m$, we have the following dimension formula
  \[\dim \pi^{\CK(r)} =
    \begin{cases}
      q^r + q^{r-1} -2q^{m-1}; \quad &\text{if $n$ is even and $e=1$}; \\
      q^r + q^{r-1} - (q^{m-1} + q^{m-2}); \quad &\text{if $n$ is odd and $e=1$}; \\
      2q^r - (q^m + q^{m-1}); \quad &\text{if $n$ is odd and $e=2$}; \\
      2q^r - 2q^{m-1}; \quad &\text{if $n$ is even and $e=2$}. \\
    \end{cases}\]
  \end{lem}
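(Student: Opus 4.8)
The plan is to compute $\dim\pi^{\CK(r)}$ \emph{directly}, from an explicit model of the minimal supercuspidal representation $\pi$, since routing through $\pi^{\CE(r)}$ would be circular: the identity $\dim\pi^{\CK(r)}=\dim\pi^{\CE(r)}$ is precisely what Proposition~\ref{prasad-lemma} extracts from this lemma. By Kutzko's construction (equivalently, Bushnell--Kutzko simple types for $\GL_2$), write $\pi\cong \mathrm{c}\text{-}\Ind_J^G\lambda$, where $J\supset Z$ is open and compact modulo the centre, $\lambda$ is an irreducible representation of $J$, and $J$ contains an open congruence subgroup $H$ (built from a quadratic field extension $E/F$ embedded in $M_2(F)$ together with a lattice chain) on which $\lambda$ restricts to a character --- the minimal $K$-type --- with $J/H$ finite and $\dim\lambda$ equal to $1$, $q-1$ or $q+1$ according to the ramification type and the parity of $n$. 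Here $2m=n$ in the even case and $2m+1=n$ in the odd case, and $H$ has level roughly $m$, which is exactly why $r\ge m$ is the relevant range.

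By Mackey's formula and Frobenius reciprocity, $\dim\pi^{\CK(r)}=\sum_{g\in J\backslash G/\CK(r)}\dim\bigl(\lambda^g\bigr)^{J^g\cap\CK(r)}$, a finite sum, and the substance of the proof is to evaluate it. First I would use the known intertwining set of a simple type, $I_G(\lambda)=JE^\times J$, to see that only double cosets with representatives in $JE^\times\CK(r)$ contribute; intersecting this with the constraint coming from the ``size'' of $\CK(r)$ relative to $\GL_2(\CO)$ (when $e=1$, so that $R_0=M_2(\CO)$ and $\CK(r)=1+\varpi^rM_2(\CO)$ is principal) or relative to the Iwahori order (when $e=2$) cuts the index set down to a finite set parametrised by a quotient of $E^\times$ by $F^\times$ times a congruence subgroup. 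Next, for each surviving $g$ the local fixed space $(\lambda^g)^{J^g\cap\CK(r)}$ has dimension $0$ or a small explicit value, computed by restricting the character $\lambda|_H$ to an abelian pro-$p$ group and counting. Finally, summing over the geometric-progression-indexed set of cosets produces the shape $q^r+q^{r-1}+\cdots$, and the subtracted correction term is recognised as $\dim\pi_D$ via Casselman's formula (\cite{Ca}, Proposition~6.5), yielding the four stated cases.

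The hard part will be the bookkeeping in the $e=2$ and odd-conductor cases. When $R_0$ is the Iwahori order, $\CK(r)$ is a congruence subgroup of the Iwahori rather than a principal congruence subgroup, so its intersections $J^g\cap\CK(r)$ no longer align with the standard filtration $1+\varpi^jM_2(\CO)$ and the intertwining/fixed-vector count must be redone lattice-chain by lattice-chain; likewise, for odd $n$ the group $J$ involves a ramified torus (or the Iwahori normaliser), and the double-coset count acquires the extra $q^{m-2}$, resp.\ $q^m$, terms appearing in the ``odd'' rows of the formula. One subtlety to keep in mind is that $e$ is the ramification index of $K/F$ and enters only through the shape of $\CK(r)$, not through $\pi$, so a fixed $\pi$ must be matched against both kinds of subgroup. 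A parallel and possibly cleaner route is to integrate the Harish--Chandra character, $\dim\pi^{\CK(r)}=\Vol(\CK(r))^{-1}\int_{\CK(r)}\Theta_\pi(g)\,dg$, insert the explicit supercuspidal character formulas (Sally--Shalika in odd residue characteristic, Bushnell--Henniart in general), and split the integral into a central contribution (giving $\dim\pi$ on the $Z\cap\CK(r)$-cosets) and an elliptic contribution supported on the regular elliptic elements of $\CK(r)$; the arithmetic of that split again reproduces the four cases. Alternatively, realising $\pi$ in its Kirillov model $C_c^\infty(F^\times)$, invariance under the upper unipotent and diagonal parts of $\CK(r)$ immediately pins the support and multiplicative level of a fixed vector, and the remaining decisive constraint --- invariance under the lower unipotent part, governed by the Bessel function of $\pi$ --- cuts out a finite-dimensional space whose dimension is the same answer.
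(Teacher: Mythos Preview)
Your proposal is correct in outline, but you lead with the hardest route while relegating the paper's actual method to a throwaway ``Alternatively'' at the end. The paper works entirely in the Kirillov model: it writes $f\in C_c^\infty(F^\times)$ in terms of its Mellin transform $\hat f(\mu,t)=\sum_k f_k(\mu)t^k$, invokes Casselman's characterisation (\cite{Cass2}, Corollary to Lemma~2) that $f\in\pi^{\CK(r)}$ if and only if $\hat f(\mu,t)=0$ for $n(\mu)>r$ and $f_k(\mu)=0$ outside $-r\le k\le n_\mu+r$, and then uses minimality of $\pi$ to compute $-n_\mu=\max(n,2n(\mu))$. The dimension is then a one-line sum over conductor strata of $\mu$, and the paper only spells out the case $n$ even, $e=1$, declaring the others similar. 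This is exactly your Kirillov sketch, except that rather than invoking the Bessel function directly for the lower-unipotent constraint, the paper packages it into the local functional equation $C(\mu,t)=C_0(\mu)t^{n_\mu}$ and quotes Casselman for the resulting description of $\pi^{\CK(r)}$.

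Your primary (type-theoretic) approach via $\pi=\mathrm{c}\text{-}\Ind_J^G\lambda$ and Mackey would certainly work, but the double-coset bookkeeping---especially matching a fixed $\pi$ against congruence subgroups of \emph{both} the maximal and Iwahori orders, as you correctly flag---is substantially heavier than the Kirillov count, and buys nothing extra here. The character-integration route has the same cost/benefit profile, with the additional wrinkle (which you note) of needing Bushnell--Henniart in even residue characteristic. The Kirillov approach is both what the paper does and what you should have led with; the key input you did not name explicitly is that minimality forces $-n_\mu=\max(n,2n(\mu))$, which is the one place the hypothesis on $\pi$ is used.
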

\begin{proof}
For $r = m$ and $e=1$, this formula occurs in \cite{Cass2}, Theorem 3.
We now use the method in \cite{Cass2} to prove the dimension formula
for the case $n$ is even and $e = 1$ while other cases are similar.
Firstly, recall some basics about Kirillov model.
Let $\psi$ be an unramified additive character of $F$. Associated to $\psi$,
we can realize $\pi$ on the space $C_c^\infty(F^\times)$ of Schwartz functions on the multiplicative group.
For any
$f \in C_c^\infty(F^\times)$ and any character $\mu$ of $\CO^\times$, define
\[f_k(\mu) = \int_{\CO^\times} f(u\varpi^k)\mu(u)du\]
where we  choose the Haar measure on $\CO^\times$ such that the total measure is $1$. Define further the formal
power series
\[\hat{f}(\mu,t) = \sum_{k \in \BZ} f_k(\mu)t^k\]
which is actually a Laurent polynomial in $t$ as $f$ has compact support on $F^\times$. Because $f$ is
locally constant,
this vanishes identically for all but a finite number of $\mu$. By Fourier duality for $F^\times$,
knowing $f(\mu,t)$
for all $\mu$ is equivalent to knowing $f$. For each $\mu$, there is a formal power series $C(\mu,t)$
such that for all
$f \in C_c^\infty(F^\times)$
\[(\pi(w)f)^{\hat{}}(\mu,t) = C(\mu,t)\hat{f}(\mu^{-1}\omega_0^{-1},t^{-1}z_0^{-1}),
  \quad C(\mu,t) = C_0(\mu)t^{n_\mu},
\quad w =
\begin{pmatrix}
  0 & 1\\
  -1 & 0
\end{pmatrix}
\]
where $\omega_0 = \omega|_{\CO^\times}$, $z_0 = \omega(\varpi)$ and some integer $n_\mu \leq -2$. Moreover,
if $\mu = 1$, then $-n_1 = n$. For any character $\mu$ of $\CO^\times$,
  \[-n_\mu =
    \begin{cases}
      n, \quad &\text{if } n(\mu) \leq m; \\
      2n(\mu), \quad &\text{if } n(\mu) > m.
  \end{cases}\]
  In fact, if take any character $\Omega$ on $F^\times$ so that $\Omega|_{\CO^\times} = \mu$,
  denote $\pi' = \pi\otimes\Omega$ and $C'(\cdot,\cdot)$ the monomial occured in the above functional equation,
  then for any character $\nu$ on $\CO^\times$, $C'(\nu,t) = C(\nu\mu, \Omega(\varpi)t)$. Therefore,
  $-n_\mu = n(\pi') = \max(n,2n(\mu))$.

  On the other hand, by \cite{Cass2}, Corollary to Lemma 2, for any
   $r \geq m$, the subspace $\pi^{\CK(r)}$ is isomorphic to the space of all functions $f(\mu,t)$ such that
  \begin{enumerate}
    \item $\hat{f}(\mu,t) = 0$ unless $n(\mu) \leq r$;
    \item for each $\mu$, $f_k(\mu) = 0$ unless $-r \leq k \leq n_\mu + r$.
  \end{enumerate}

Summing up, for a given $\mu$ with conductor $n(\mu) \leq r$, the dimension of the space consisting of
$\hat{f}(\mu,t)$ with $f \in \pi^{\CK(r)}$ is
\[\begin{cases}
    2(r-m) + 1; \quad &\text{if } n(\mu) \leq m; \\
    2(r-n(\mu)) + 1; \quad &\text{if } n(\mu) > m.
\end{cases}\]
Therefore,
\[
  \begin{aligned}
\dim \pi^{\CK(r)} &= (q^m - q^{m-1})(2(r-m) + 1) + \sum_{m < k \leq r} (q^k - 2q^{k-1} + q^{k-2})(2(r-k) + 1) \\
  &= q^r + q^{r-1} - 2q^{m-1}.
\end{aligned}
\]
\end{proof}

\begin{proof}[Proof of Proposition \ref{local-multi-one-2}]
   Note that $R^\times = \CO_K^\times\CK(m)$ unless $K$ is ramified with $n$ even and
   once this equation holds, Proposition \ref{local-multi-one-2} follows directly from
   Proposition \ref{prasad-lemma}. It remains to consider the case $K$ is ramified with $n$ even.
   For this case, $R^\times = \CO_K^\times\CK'(m)$ with $\CK'(m) = 1+\varpi_K^{2m-1}R_0$.
   We want to show $\pi^{\CK'(m)} = \pi^{\CE'(m)}$ with $\CE'(m) = \CK'(m)\cap K^\times$ and
   Proposition \ref{local-multi-one-2} then holds.
   By \cite{Tunnell2}, Proposition 3.5, $\pi$ is not minimal. Take a character $\mu$ so that
   $\pi_0 = \pi\otimes\mu$ has minimal conductor $n_0$. Then $n(\mu) = m$. Apply Proposition
   \ref{prasad-lemma},
   \[\pi^{\CK'(m)} = \pi_0^{\CK'(m)} \supset \pi_0^{\CK(m-1)} = \pi_0^{\CE(m-1)}.\]
   We claim that $\pi_0^{\CE(m-1)} = \pi_0^{\CE'(m)}$. If so,
   $\pi_0^{\CE(m-1)} = \pi^{\CE'(m)}$ and then $\pi^{\CK'(m)} = \pi^{\CE'(m)}$. To prove this,
   note that $\CE'(m) \subset \CE(m-1) \subset 1+\varpi_D^{n_0-1}\CO_D$. Use Tunnell-Saito's theorem,
   \[\dim \pi_0^{\CE(m-1)} + \dim \pi_{0,D} = \# \CX(m-1), \quad
   \dim \pi_0^{\CE'(m)} + \dim \pi_{0,D} = \# \CX'(m)\]
   where the set $\CX(m-1)$ consists of characters $\Omega$ of $K^\times$ such that
   $\Omega|_{F^\times}\cdot \omega_{\pi_0} = 1$ with $\Omega|_{\CE(m-1)} = 1$ and the
   set $\CX'(m)$ is defined similarly. As they are nonempty,
   \[\# \CX(m-1) = \# K^\times/F^\times\CE(m-1) =  \# K^\times/F^\times\CE'(m) = \# \CX'(m).\]
   Thus $\pi_0^{\CE(m-1)} = \pi_0^{\CE'(m)}$ and the proof is complete.
\end{proof}

\subsection{Local Computations}

Let $\CW(\sigma,\psi)$ be the Whittaker model of $\sigma$ with respect to $\psi$ and
recall that we have an  invariant Hermitian form on $\CW(\sigma, \psi)$ defined by
\[(W_1, W_2):= \int_{F^\times} W_1\left[
  \begin{pmatrix}
    a & \\
      & 1
  \end{pmatrix}
  \right]
  \ov{W_2\left[
    \begin{pmatrix}
      a & \\
        & 1
    \end{pmatrix}
  \right]}d^\times a.\]
For any $W \in \sigma$, denote
\[\alpha(W) = \frac{( W,W )}{L(1,\sigma,\ad)L(1,1_F)L(2,1_F)^{-1}}.\]

\begin{prop}\label{local inner product}
  Denote $W_0$ the normalized new vector of $\sigma$.
  If $F$ is nonarchimedean, then
  \[\alpha(W_0)|\delta|^{1/2} =
    \begin{cases}
      1, \quad &\text{if $\sigma$ is unramified}; \\
      L(2,1_F)L(1,1_F)^{-1}L(1,\sigma,\ad)^{-\delta_\sigma}, \quad &\text{otherwise},
  \end{cases}\]
  where $\delta_\sigma \in \left\{ 0,1 \right\}$ and equals $0$ precisely when $\sigma$ is a subrepresentation
  of induced representation $\Ind(\mu_1,\mu_2)$ with at least one $\mu_i$ unramified. If $F = \BR$
  and $\sigma$ is the discrete series $\CD_\mu(k)$, then $\alpha(W_0) = 2^{-k}.$
\end{prop}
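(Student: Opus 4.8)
The plan is to reduce $\alpha(W_0)$ to a single integral over the diagonal torus and then to evaluate that integral using the explicit restriction of the (normalized) new vector to the torus. By definition $\alpha(W_0) = (W_0,W_0)\big/\big(L(1,\sigma,\ad)L(1,1_F)L(2,1_F)^{-1}\big)$, and $(W_0,W_0) = \int_{F^\times}\big|W_0(\diag(a,1))\big|^2\,d^\times a$ with $d^\times a = L(1,1_F)\,da/|a|$; so the task is to compute this integral, the input being the shape of $a\mapsto W_0(\diag(a,1))$ on $F^\times$, which is pinned down by the normalization $|\delta|^{s-1/2}Z(s,W_0)=L(s,\sigma)$.

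For $F$ nonarchimedean the new-vector Kirillov function is supported on $\delta^{-1}\CO\setminus\{0\}$, depends only on $v(a)$ there, and its values are governed by $L(s,\sigma)$: writing $W_0(\diag(\varpi^{k}\delta^{-1}u,1)) = |\delta|^{-1/2}c_k$ for $u\in\CO^\times$, $k\ge 0$, Casselman's theory of new vectors together with the normalization \cite{Cass} gives $c_k = q^{-k/2}s_k(\alpha,\beta)$ (with $s_k$ the $k$-th complete symmetric polynomial in the Satake parameters) when $\sigma$ is unramified; $c_k = \big(\mu_1(\varpi)q^{-1/2}\big)^k$ when $\sigma$ is ramified and a subrepresentation of $\Ind(\mu_1,\mu_2)$ with $\mu_1$ unramified --- which is exactly the case $\delta_\sigma=0$, and includes unramified twists of Steinberg; and $c_0=1$, $c_k=0$ for $k\ge 1$ in every remaining ramified case, equivalently $L(s,\sigma)=1$, $\delta_\sigma=1$. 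Then $(W_0,W_0) = |\delta|^{-1/2}\sum_{k\ge 0}|c_k|^2$ (using $\Vol(\CO^\times)=|\delta|^{1/2}$), and it remains to sum the series. In the unramified case $\sum_{k\ge 0}q^{-k}|s_k(\alpha,\beta)|^2 = L(1,1_F)L(2,1_F)^{-1}L(1,\sigma,\ad)$, an elementary geometric/Chebyshev identity which can also be obtained by specializing to $s=1$ the spherical local Rankin--Selberg integral $Z(s,W_0,\widetilde W_0,\Phi)$ already appearing in the proof of Proposition \ref{Petersson} (of value $L(s,\sigma\times\widetilde\sigma)=L(s,\sigma,\ad)L(s,1_F)$); this gives $\alpha(W_0)|\delta|^{1/2}=1$. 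In the ramified case the series is a single geometric series: for $\delta_\sigma=0$ it is $L(1,1_F)$ when $\sigma$ is a principal series and $L(2,1_F)$ when $\sigma$ is an unramified twist of Steinberg, and in both subcases this equals $L(1,\sigma,\ad)$ (for such a principal series $L(s,\sigma,\ad)=L(s,1_F)$, for a twist of Steinberg $L(s,\sigma,\ad)=L(s+1,1_F)$); for $\delta_\sigma=1$ it is $1$. Hence $\sum_k|c_k|^2 = L(1,\sigma,\ad)^{1-\delta_\sigma}$ in the ramified case, and substituting into the formula for $\alpha$ yields $\alpha(W_0)|\delta|^{1/2} = L(2,1_F)L(1,1_F)^{-1}L(1,\sigma,\ad)^{-\delta_\sigma}$, as claimed.

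For $F=\BR$ and $\sigma=\CD_\mu(k)$, the minimal-weight vector has Whittaker function $W_0(\diag(a,1)) = c\,|a|^{k/2}e^{-2\pi a}$ for $a>0$ and $0$ for $a<0$ (the unitary twist by $\mu$ does not affect $|W_0|$), with $c$ fixed by $Z(s,W_0)=L(s,\sigma)$. Then $(W_0,W_0)=c^2\int_0^\infty a^{k}e^{-4\pi a}\,da/a$ is a Gamma integral, and combining it with the archimedean $\mathrm{Ad}$-factor $L(s,\sigma,\ad)$ at $s=1$ and with $L(1,1_\BR)=\Gamma_\BR(1)=1$, $L(2,1_\BR)=\Gamma_\BR(2)=\pi^{-1}$ gives $\alpha(W_0)=2^{-k}$. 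Equivalently, for unitary $\sigma$ one has $\widetilde W_0=\overline{W_0}$, so $(W_0,W_0)=\Omega_{\sigma_v}$, and the asserted value of $\alpha(W_0)$ is just the explicit evaluation of $\Omega_{\sigma_v}$ together with these gamma factors.

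The conceptual content is light; the work is bookkeeping. First, one must match the normalization of $W_0$ against $|\delta|^{s-1/2}Z(s,W_0)=L(s,\sigma)$ --- this is what produces the powers of $|\delta|$ nonarchimedean and the constant $c$ (and the factor $2$) archimedean, and getting these wrong throws the answer off by powers of $2$ or of $|\delta|$. Second, one must organize the ramified dichotomy so that the single identity $\sum_k|c_k|^2 = L(1,\sigma,\ad)^{1-\delta_\sigma}$ covers all subcases uniformly (ramified principal series with one unramified inducing character, unramified twists of Steinberg, supercuspidals). I expect this ramified-case bookkeeping --- identifying exactly when $\delta_\sigma=0$ and checking the coincidence of $\sum_k|c_k|^2$ with $L(1,\sigma,\ad)$ in each subcase --- to be the main point to get right; once the torus-restriction of $W_0$ is written down, the integrals themselves are routine.
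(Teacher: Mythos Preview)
Your approach is the same as the paper's: both reduce the computation to the explicit torus restriction of the normalized Whittaker new vector and integrate. Your nonarchimedean case analysis --- the three-way split into unramified, ramified with $\delta_\sigma=0$ (one inducing character unramified, including unramified Steinberg twists), and ramified with $\delta_\sigma=1$ --- matches the paper's list of Kirillov functions, and your identification $\sum_k|c_k|^2=L(1,\sigma,\ad)^{1-\delta_\sigma}$ in the ramified case is correct.

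There is one slip in the real discrete-series case. The paper's new vector is the one invariant under $T_1\cap U=\{\diag(\pm1,1)\}$ inside the minimal $K$-type, hence \emph{even}: $W_0(\diag(y,1))=|y|^{k/2}e^{-2\pi|y|}$ on all of $\BR^\times$. The vector you describe, supported only on $\BR_+^\times$, is the lowest $\SO(2)$-weight vector --- the local component $W$ of the Hilbert newform --- and the paper notes $(W_0,W_0)=2(W,W)$, $Z(s,W)=\tfrac12 L(s,\sigma)$. If you normalize your vector by $c=2$ to force $Z(s,\cdot)=L(s,\sigma)$, you obtain $2W$, and then $(2W,2W)=4(W,W)=2(W_0,W_0)$, so your computation yields $\alpha=2^{1-k}$ rather than $2^{-k}$. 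Using the even $W_0$ (no constant $c$ needed: already $Z(s,W_0)=L(s,\sigma)$) removes the extra factor of $2$ and gives the stated $\alpha(W_0)=2^{-k}$.
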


The above proposition follows from the  explicit form of $W_0$.
If $F$ is nonarchimedean,  $W_0$ is the one in the new vector line such that
\[W_0\left[
  \begin{pmatrix}
    \delta^{-1} & \\
      & 1
  \end{pmatrix}
\right] = |\delta|^{-1/2}\]
and we have the following list (See \cite{Sch}, p.23)
\begin{enumerate}
  \item If $\sigma = \pi(\mu_1,\mu_2)$ is a principal series, then
    \[W_0\left[
      \begin{pmatrix}
	y & \\
	  & 1
      \end{pmatrix}
    \right] =
    \begin{cases}
      |y|^{1/2}\sum_{{k+l= v(y\delta)}\atop {k,l \geq 0}}
      \mu_1(\varpi)^k\mu_2(\varpi)^l 1_\CO(\delta y), \quad \text{if $n(\mu_1) = n(\mu_2) = 0$}; \\
      |y|^{1/2}\mu_1(\delta y)1_\CO(\delta y), \quad \text{if $n(\mu_1) = 0$ and $n(\mu_2) > 0$}; \\
      |\delta|^{-1/2}1_{\CO^\times}(\delta y), \quad \text{if $n(\mu_1)>0$ and $n(\mu_2)>0$}.
  \end{cases}\]
   \item If $\sigma = \mathrm{sp}(2)\otimes \mu$ is a special representation, then
     \[W_0\left[
      \begin{pmatrix}
	y & \\
	  & 1
      \end{pmatrix}
    \right] =
    \begin{cases}
       |\delta|^{-1/2}\mu(\delta y)|\delta y|1_\CO(\delta y), \quad &\text{if $n(\mu) = 0$}; \\
       |\delta|^{-1/2}1_{\CO^\times}(\delta y), \quad &\text{if $n(\mu)>0$}.
  \end{cases}\]
  \item If $\sigma$ is supercuspidal, then
    \[ W_0\left[
      \begin{pmatrix}
	y & \\
	  & 1
      \end{pmatrix}
    \right] = |\delta|^{-1/2}1_{\CO^\times}(\delta y).\]
\end{enumerate}
If $F = \BR$ and $\sigma$ is the discrete series $\CD_\mu(k)$, then
\[W_0\left[
  \begin{pmatrix}
    y & \\
      & 1
  \end{pmatrix}
\right] = |y|^{k/2}e^{-2\pi |y|},\]
and in general, for archimedean cases, it is expressed by the Bessel function \cite{Popa}.
Note that for $F=\BR$ and $\sigma$ a unitary discrete series of weight $k$,
let $W\in \CW(\sigma, \psi)$ be the vector satisfying
\[
W\left[
  \begin{pmatrix}
    y & \\
      & 1
  \end{pmatrix}
\right] = |y|^{k/2}e^{-2\pi |y|}1_{\BR_+^\times}(y).\]
Then $W$ can be realized as a local component of a Hilbert newform and
\[(W_0,W_0) = 2 (W,W), \quad Z(s,W) = \frac{1}{2}L(s,\sigma).\]

\begin{prop}\label{p-adic-beta}
  If $F$ is nonarchimedean, let $f$ be a nonzero vector in the one-dimensional space
  $V(\pi,\chi)$, then
  \[
   \beta(f)|D\delta|^{-1/2} =
   \begin{cases}
     1,  \quad \text{if $n = c = 0$}; \\
     L(1,\eta)^2|\varpi^c|,  \quad \text{if $n=0$ and $c>0$}; \\
     \displaystyle{\frac{L(1,1_F)}{L(2,1_F)}L(1,\pi,\ad)^{\delta_\pi}}, \quad \text{if $n>0$, $c=0$ and $K$ is split};\\
     \displaystyle{\frac{L(1,1_F)}{L(2,1_F)}L(1,\eta)^2|\varpi^c|
     \frac{L(1,\pi,\ad)^{\delta_\pi}}{L(1/2,\pi,\chi)}},
     \quad \text{if $nc>0$, either $K$ is split or  $c \geq n$};\\
     \displaystyle{e(1-q^{-e})\frac{L(1,\pi,\ad)}{L(1/2,\pi,\chi)}}, \quad \text{if $n>c$ and $K$ is nonsplit}.
 \end{cases}\]
 which is independent of the choice of $f \in V(\pi,\chi)$.
\end{prop}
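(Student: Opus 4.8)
The plan is to prove Proposition \ref{p-adic-beta} by an exhaustive case analysis organized by the conductor $n$ of $\sigma$, the conductor $c$ of $\chi$, and the splitting type of $K/F$. Since the toric integral $\beta$ is invariant under replacing $(\pi,\chi)$ by $(\pi\otimes\mu,\chi\otimes\mu_K^{-1})$, I would first reduce to the case that $\pi$ and $\chi$ are unitary and identify $(\tilde\pi,\chi^{-1})$ with $(\bar\pi,\bar\chi)$, so that $\beta(f)=\beta(f,\bar f)$ for the definite invariant Hermitian pairing $\langle\ ,\ \rangle$; in particular $\langle f,\bar f\rangle\neq 0$ for any nonzero $f$. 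Because the computation will exhibit $\beta(f)\neq 0$ for a generator $f$ of the one-dimensional space $V(\pi,\chi)$ (the dimension being already known from Proposition \ref{local-multi-one}), it also discharges the claim deferred in the proof of that proposition that every nonzero $f\in V(\pi,\chi)$ is a test vector.

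When $B$ is split I would work in the Whittaker model $\CW(\sigma,\psi)$, using the invariant pairing $\langle W_1,W_2\rangle=\int_{F^\times}W_1\matrixx{a}{}{}{1}\overline{W_2\matrixx{a}{}{}{1}}\,d^\times a$ and the explicit formula for the normalized new vector $W_0$ recorded after Proposition \ref{local inner product}. A generator of $V(\pi,\chi)$ is then described via the admissible order $R$ and Lemma \ref{p}: it is $W_0$ itself when $n=c=0$; it is $\sigma(g)W_0$ for an explicit $g\in\GL_2(F)$ determined by the lattice data $R\cap K=\CO_{c_1}$ when $c>0$ (including $n=0$, $c>0$, where $R=\End_\CO(\CO_c)$ is maximal); and it is the new vector of the twist $\sigma\otimes\chi_1$ when $n>0$, $c=0$ and $K$ is split (cf.\ the proof of Proposition \ref{local-multi-one}). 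Fixing the embedding $K^\times\hookrightarrow\GL_2(F)$ compatible with $R\cap K=\CO_{c_1}$, I would decompose $F^\times\backslash K^\times$ — diagonally if $K$ is split, and by an Iwasawa/Cartan decomposition relative to $F^\times$ and the torus if $K$ is inert or ramified — and integrate the matrix coefficient $\langle\sigma(t)f,\bar f\rangle/\langle f,\bar f\rangle$ against $\chi(t)$, reducing each instance to a geometric-series summation over valuations. In the unramified case $n=c=0$ this recovers the value $1$ (equivalently the value quoted from \cite{Wal}, after the measure bookkeeping $\Vol(K^\times/F^\times)=|\delta|^{1/2}$ and $D\CO=\CO$); the other split cases produce the asserted products of $L(1,1_F)$, $L(2,1_F)$, $L(1,\eta)$, $|\varpi^c|$, $L(1/2,\pi,\chi)^{-1}$ and $L(1,\pi,\ad)^{\delta_\pi}$.

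Two families of cases admit a shorter argument. First, when $K$ is nonsplit and $c<n$ (so $c_1=0$), the generator $f$ is by definition a $\chi^{-1}$-eigenvector for the full action of $K^\times$; hence $\langle\sigma(t)f,\bar f\rangle/\langle f,\bar f\rangle=\chi^{-1}(t)$, the integrand is identically $1$, and the toric integral equals $\Vol(F^\times\backslash K^\times)$. Plugging in $\Vol(F^\times\backslash K^\times)=|\delta|^{1/2}$ (inert) or $2|D\delta|^{1/2}$ (ramified) together with the values of $L(1,\eta)$ and $L(2,1_F)$ yields exactly $e(1-q^{-e})\,L(1,\pi,\ad)/L(1/2,\pi,\chi)\cdot|D\delta|^{1/2}$ in both subcases. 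Second, when $B$ is division — which occurs precisely within this last family, with $\pi$ supercuspidal or $\pi=\mathrm{sp}(2)\otimes\mu$ and $\mu_K\chi=1$ — the space $\pi$ is finite-dimensional over $F$, the toric integral becomes a finite sum over cosets of $K^\times$ by a subgroup compact modulo center, and I would evaluate it using the structure $R^\times=\CO_K^\times\CK(m)$ (resp.\ $\CO_K^\times\CK'(m)$ for ramified $K$ with $n$ even, as in the proof of Proposition \ref{local-multi-one-2}), together with Gross's computations in \cite{Gross} for $nc=0$ and a direct analysis for the special representation; the sum again collapses to $\Vol(F^\times\backslash K^\times)$ times the appropriate factor, consistent with the split-side answer.

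The main obstacle will be the $B$-split cases with $nc>0$. There $f$ is a genuine $\GL_2(F)$-translate of the new vector, neither $\GL_2(\CO)$-fixed nor a $K^\times$-eigenvector, so one must (i) write $\sigma(g)W_0$ explicitly on the torus double cosets that support the integral, and (ii) treat the subcase $0<c_1<n$, where there are two $K^\times$-conjugacy classes of admissible orders and the generator of $V(\pi,\chi)$ is pinned by the requirement that the conductor of the corresponding $\chi_i$ equal $c_1$; one must check that $\beta$ is independent of this choice and equals the stated value, and in parallel that the ramified-$K$, $n$-even bookkeeping ($2m-1$ in place of $2m$) does not perturb the answer. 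I would isolate (i)--(ii) in one auxiliary lemma listing the values of $\langle\sigma(t)f,\bar f\rangle$ as $t$ runs over representatives of $F^\times\backslash K^\times$ adapted to $R$, after which the $\chi$-integral is a routine, if lengthy, summation.
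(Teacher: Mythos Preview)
Your plan is correct and follows the same route as the paper: reduce to $\beta^0$, handle nonsplit $K$ with $c<n$ by the $\chi^{-1}$-eigenvector argument (this already covers $B$ division uniformly, so your separate analysis there via \cite{Gross} is redundant), and treat the remaining cases in the Whittaker model with $W_0$ as test vector for a suitably conjugated embedding of $K$. The paper's execution contains two devices that sharpen your outline. For split $K$ it conjugates the diagonal embedding by $\gamma_c = \matrixx{1}{\varpi^{-c}}{}{1}$ and observes that then $\beta^0 = (W_0,W_0)^{-1}\,|Z(1/2,\pi(\gamma_c)W_0,\chi_1)|^2$, which for $c>0$ is a single Gauss sum rather than a geometric series. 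For nonsplit $K$ with $c\geq n$ the crucial point behind your auxiliary lemma is that the natural coset representatives $S_i = \{1+b\tau : v(b)=i\}$ and $S'$ of $K^\times/F^\times\CO_c^\times$ each map to a single point of $U_1(n)\backslash G/U_1(n)$, so the matrix coefficient $\Psi$ is constant on each set and character orthogonality collapses the whole sum to $1-\Psi_{c-1}$, which is then evaluated directly (via Macdonald polynomials for $n=0$, and by an elementary Whittaker computation for $n\geq 1$).
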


The proof of Proposition \ref{p-adic-beta}  is reduced to computing the integral
\[\beta^0 = \int_{F^\times \bs K^\times} \frac{( \pi(t)f,f )}{( f,f )}
\chi(t)dt,\]
where $f$ is any nonzero vector in $V(\pi,\chi)$.

In the case  $n>c$ and $K$ is nonsplit,
$f$ is $\chi^{-1}$-eigen and it is easy to see that $\beta^0 = \Vol(F^\times \bs K^\times)$.

From now on assume $n \leq c$ or $K$ is split. Then $B  = M_2(F)$ by Lemma \ref{Tunnell-Satio} (5).
Recall that the space $V(\pi, \chi)$ depends on a choice of an admissible order $R$ for $(\pi, \chi)$.  Let $f$ be a test vector in $V(\pi, \chi)$ defined by $R$. For any $t\in K^\times$, $f':=\pi(t) f$ is a test vector defined by
the admissible order $R' = tRt^{-1}$. It is easy to check that $\beta(f') = \beta(f)$. Thus, for a $K^\times$-conjugacy class of admissible orders, we can pick
a particular order  to compute $\beta^0$.
Note that there is a unique  $K^\times$-conjugacy class of admissible orders unless in the exceptional case
$0 < c_1 < n$ and $n(\chi_1) = n(\chi_2) = c$. In the exceptional case, there
are exactly two $K^\times$-conjugacy classes of admissible orders, which are conjugate to each other by a normalizer of $K^\times$ in $B^\times$.

Any admissible order (in the case $n\leq c$ or $K$ split) is an Eichler order of
discriminant $n$, i.e. conjugate to $R_0(n) := \begin{pmatrix}
  \CO & \CO \\
  \fp^n & \CO
\end{pmatrix}$. Choose an embedding of $K$ into $M_2(F)$ as follows such that $R_0(n)$ is an admissible order for $(\pi, \chi)$.
\begin{enumerate}
  \item If $K$ is split,  fix an $F$-algebra isomorphism $K \cong F^2$.
  If $c \geq n$ or $n(\chi_1) = c$, embed $K$
  into $M_2(F)$ by
\[\iota_1: (a, b) \lto\gamma_c^{-1}
  \begin{pmatrix}
    a & \\
      & b
  \end{pmatrix}\gamma_c, \quad \gamma_c =
  \begin{pmatrix}
    1 & \varpi^{-c} \\
      & 1
  \end{pmatrix}.\]
If $n(\chi_1) < c < n$, embed $K$ into $M_2(F)$ by
\[\iota_2: (a, b) \lto \gamma_c^{-1}
  \begin{pmatrix}
    b & \\
      & a
  \end{pmatrix}\gamma_c.\]
  Note that for any $t \in K^\times$, $\iota_1(t) = j\iota_2(t)j^{-1}$ with $j = \gamma_c^{-1}w\gamma_c$
  and $w =
  \begin{pmatrix}
    0 & 1 \\
    1 & 0
  \end{pmatrix}$.
 \item If $K$ is  a field, take $\tau \in \CO_K$ such that $\CO_K = \CO[\tau]$ and that
   if $K/F$ is ramified then $\tau$ is a uniformizer. Embed $K$ into $M_2(F)$ by
$$a+b\tau \lto
\gamma_c^{-1}
  \begin{pmatrix}
    a+b\tr \tau & b \RN\tau \\
    -b & a
  \end{pmatrix}\gamma_c, \quad \gamma_c =
  \begin{pmatrix}
    \varpi^c N\tau & \\
      & 1
  \end{pmatrix}.$$
\end{enumerate}


Assume $K \cong F^2$.
Note that if $n(\chi_1) < c < n$,
\[\beta^0 = \int_{F^\times \bs K^\times} \frac{(\pi(\iota_2(t))W_0,W_0)}{(W_0,W_0)}\chi(t)dt
=\int_{F^\times \bs K^\times} \frac{(\pi(\iota_1(t))W_0,W_0)}{(W_0,W_0)}\bar{\chi}(t)dt\]
where $\bar{\chi}_1 = \chi_2$, $\bar{\chi}_2 = \chi_1$ and $n(\bar{\chi}_1) = n(\chi_2) = c$.
We reduce to consider the case $c \geq n$ or
$n(\chi_1) = c$. Note that for the exceptional case, if we take $\pi(j)W_0$ as a test vector, then
\[\beta^0 = \int_{F^\times \bs K^\times} \frac{(\pi(\iota_1(t)j)W_0,\pi(j)W_0)}{(W_0,W_0)}\chi(t)dt
          = \int_{F^\times \bs K^\times} \frac{(\pi(\iota_1(t))W_0,W_0)}{(W_0,W_0)}\chi(\bar{t})dt
\]
with $n(\bar{\chi}_1) = n(\chi_2) = c$. Thus, even for the exceptional case, only need to consider
$W_0$ as a test vector. Thus
\[
  \begin{aligned}
  \beta^0 &= (W_0, W_0)^{-1}
  \iint_{(F^\times)^2} \pi(\gamma_c)W_0
  \left[
    \begin{pmatrix}
      ab & \\
         & 1
    \end{pmatrix}
  \right]
  \ov{\pi(\gamma_c)W_0
    \left[
      \begin{pmatrix}
	b & \\
	  & 1
      \end{pmatrix}
    \right]}\chi_1(a) d^\times b d^\times a \\
   &= (W_0,W_0)^{-1} |Z(1/2,\pi(\gamma_c)W_0,\chi_1)|^2.
  \end{aligned}
\]
If $c=0$,  $Z(1/2,W_0,\chi_1) = \chi_1(\delta)^{-1}L(1/2,\pi\otimes\chi_1)$
and then $\beta^0 =  (W_0,W_0)^{-1} L(1/2,\pi,\chi).$
If $c>0$, then
\[
  \begin{aligned}
    Z(1/2,\pi(\gamma_c)W_0,\chi_1) &= \int_{F^\times} W_0\left[
      \begin{pmatrix}
	a & \\
	  & 1
      \end{pmatrix}
    \right]\psi(a\varpi^{-c})\chi_1(a)d^\times a \\
    &= \sum_{k \in \BZ} W_0\left[
      \begin{pmatrix}
	\varpi^k & \\
	  & 1
      \end{pmatrix}
    \right]\int_{\varpi^k\CO^\times} \psi(a\varpi^{-c})\chi_1(a)d^\times a.
\end{aligned}\]
Assume $n(\chi_1) =c$, then the integral $\int_{\varpi^k\CO^\times} \psi(a\varpi^{-c})\chi_1(a)d^\times a$
vanishes unless $k = -v(\delta)$ while
\[\Big|\int_{\delta^{-1}\CO^\times} \psi(a\varpi^{-c})\chi_1(a)d^\times a\Big|= L(1,1_F)|\delta|^{1/2}q^{-c/2}.\]
Thus
\[\beta^0 = (W_0,W_0)^{-1}L(1,1_F)^2q^{-c}.\]
Assume $c\geq n$ and $n(\chi_1) < c$. Let $j$ be a normalizer of $K^\times$ with $jt = \bar{t}j$ for any
$t \in K^\times$. As $c \geq n$, there exists some $t_0 \in K^\times$ such that
$t_0U_0(n)t_0^{-1} = jU_0(n)j^{-1}$ and $\pi(t_0)W_0, \pi(j)W_0$ are in the same line. Thus
\[
  \begin{aligned}
  \beta^0 &= \int_{F^\times \bs K^\times}
  \frac{(\pi(t)W_0,W_0)}{(W_0,W_0)} \bar{\chi}(t)dt = (W_0,W_0)^{-1} L(1,1_F)^2q^{-c}
  \end{aligned}
\]
as $n(\bar{\chi}_1) = n(\chi_2) = c$.

\s{\bf Remark.} Assume $n(\chi_1)<c<n$ and $R$ is the intersection of two maximal orders $R'$ and
$R''$ with $R' \cap K = \CO_c$ and $R' \cap K = \CO_K$. If $R$ is not admissible, then the toric
integral for $f$ is $\omega$-eigen under $R^\times$ must vanish if $c > 1$. In the case $c=1$ and then $n(\chi_1) = 0$,
\[
  \int_{F^\times \bs K^\times}
  \frac{(\pi(\iota_1(t))W_0,W_0)}{(W_0,W_0)}\chi(t)dt = (W_0,W_0)^{-1}L(1,1_F)^2q^{-2}.
\]

It remains to consider the case $K$ is a field and $c \geq n$. Let $\Psi(g)$ denote the matrix coefficient:
    $$ \Psi(g) := \frac{(\pi(g)W_0,W_0)}{(W_0,W_0)}, \qquad  g\in \GL_2(F).$$
     Then
  $$\beta^0
    =\frac{\Vol(K^\times/F^\times)}{\# K^\times/F^\times\CO_c^\times}
  \sum_{t \in K^\times/F^\times\CO_c^\times} \Psi(t)\chi(t).$$
In this case $c=0$, $\pi$ is unramified.
    Further, if $K/F$ is unramified, then $\beta^0 = \Vol(K^\times/F^\times) = |\delta|^{1/2}$, and if $K/F$ is ramified,
    $\beta^0 = |D\delta|^{1/2} (1 + \Psi(\tau)\chi(\tau))$,
    where $\Psi(\tau)$ is expressed by the MacDonald polynomial and one has
    $\beta(f) = |D\delta|^{1/2}.$ It remains to consider the case $c>0$. Denote
    \[S_i = \left\{ 1+b\tau, b \in \CO/\fp^c, v(b) = i \right\}, \quad 0 \leq i \leq c-1\]
    and
    \[S' =
      \begin{cases}
    \left\{ a+\tau,a\in\fp/\fp^c \right\}, \quad &\text{if } e=1; \\
    \left\{ a\varpi+\tau, a \in \CO/\fp^c \right\}, \quad &\text{if } e=2.
      \end{cases}\]
    Then a complete representatives of $K^\times/F^\times\CO_c^\times$ can be taken as
    \[\left\{ 1 \right\}\sqcup\left(\sqcup_{i} S_i\right)\sqcup S'.\]
    Note that $\Psi$ is a function on $U_1(n)\bs G / U_1(n)$.
    The following observation is key to our computation:
    the images of $S_i$, $0 \leq i \leq c-1$ and $S'$ under the natural map
      \[\pr:K^\times \ra U_1(n)\bs G /U_1(n)\]
      are constant. Precisely,
      \[\pr(S_i) = \left[
    \begin{pmatrix}
      1 & \varpi^{i-c} \\
        & 1
    \end{pmatrix}
      \right], \quad
      \pr(S') = \left[
    \begin{pmatrix}
      & \varpi^{-c} \\
      -\varpi^{c+e-1} &
    \end{pmatrix}
      \right].\]
    Follow from this
    \[\sum_{t \in K^\times/F^\times\CO_c^\times} \Psi(t)\chi(t) =
    1+\sum_{i=0}^{c-1} \Psi_i \sum_{t\in S_i} \chi(t) + \Psi'\sum_{t \in S'} \chi(t),\]
    where $\Psi_i$ (resp. $\Psi'$) are the valuations of $\Psi(t)$ on $S_i$ (resp. $S'$).

    Assume the central character $\omega$ is unramified, then we may  take $\omega = 1$.
    If $e=c=1$, we have
      \[\sum_{t \in S_0} \chi(t) = -\chi(\tau)-1, \quad \sum_{t \in S'}\chi(t) = \chi(\tau).\]
     Otherwise
      \[\sum_{t \in S_i} \chi(t) =
    \begin{cases}
      0, \quad &\text{if $c>1$ and } 0 \leq i \leq c-2, \\
      -1,\quad &\text{if } i=c-1,
      \end{cases} \quad
      \text{and} \quad
      \sum_{t \in S'} \chi(t) = 0.\]
    Therefore,
    \[ \sum_{t \in K^\times/F^\times\CO_c^\times} \Psi(t)\chi(t) =
      \begin{cases}
    1+(-\chi(\tau)-1)\Psi_0 +\chi(\tau)\Psi', \quad &\text{if } e=c=1, \\
    1-\Psi_{c-1}, \quad &\text{otherwise.}
      \end{cases}\]
    Note that if $e=1$, then $
    \begin{pmatrix}
      & \varpi^{-c} \\
      -\varpi^{c} &
    \end{pmatrix}$ equals
    $
    \begin{pmatrix}
      1 & \varpi^{-c} \\
        & 1
    \end{pmatrix}$ in $ZU_1(n) \bs G / U_1(n)$ and as $\omega=1$, $\Psi' = \Psi_0$. We obtain
    \[\sum_{t \in K^\times/F^\times\CO_c^\times} \Psi(t)\chi(t) = 1-\Psi_{c-1}\]
    and reduce to evaluate $\Psi_{c-1}$. If $n=0$, the matrix coefficient $\Psi_{c-1}$ is
    expressed by the MacDonald polynomial.
    In particular, if  the Satake parameter of $\pi$ is $(\alpha,\alpha^{-1})$, then
    \[1-\Psi_{c-1} = \frac{(1-\alpha^2q^{-1})(1-\alpha^{-2}q^{-1})}{1+q^{-1}}.\]
    If $n=1$, then $\pi = \mathrm{sp}(2)\otimes \mu$ with $\mu$ a unramified quadratic character on
    $F^\times$. By definition,
    \[
      \begin{aligned}
    \Psi_{c-1} &= |\delta|^{1/2}L(1,\pi,\ad)^{-1}\int_{F^\times}
    W_0\left[
      \begin{pmatrix}
        a & \\
          & 1
      \end{pmatrix}
      \begin{pmatrix}
        1 & \varpi^{-1} \\
          & 1
      \end{pmatrix}
    \right]\ov{W_0\left[
      \begin{pmatrix}
        a & \\
          & 1
      \end{pmatrix}
    \right]}d^\times a  \\
    &= |\delta|^{3/2}L(1,\pi,\ad)^{-1}\int_{\varpi^{-n(\psi)}\CO}\psi(a\varpi^{-1})|a|^2d^\times a \\
    &= |\delta|^{3/2}L(1,\pi,\ad)^{-1}(-q^{-1})L(1,\pi,\ad)|\delta|^{-3/2} = -q^{-1}.
    \end{aligned}\]
    If $n \geq 2$, then
    \[
     \begin{aligned}
        \Psi_{c-1} &= |\delta|^{-1/2}\int_{\varpi^{-1-n(\psi)}\CO^\times} \psi(x)d^\times x =-q^{-1}L(1, 1_F).
     \end{aligned}
    \]
     With these results, we obtain
      \[\beta^0 = \frac{\Vol(K^\times/F^\times)}{\# K^\times/F^\times\CO_c^\times}\cdot
    \begin{cases}
      \displaystyle{\frac{L(1,1_F)}{L(1,\pi,\ad)(1+q^{-1})}}, \quad
      &\text{if } n=0; \\
      1+q^{-1}, \quad &\text{if }n=1; \\
      L(1,1_F), \quad &\text{if } n\geq 2.
      \end{cases}\]

  Finally, we deal with the case $\omega$ is ramified. As above, it is routine to check that $\Psi_i$ with
  $i < c-1$ and $\Psi'$ are vanishing. Moreover, $\Psi_{c-1} = 0$ if and only if $\delta_\pi = 0$ and
  for $\delta_\pi = 1$,
  \[\Psi_{c-1} = -q^{-1}L(1, 1_F).\]
  By the definition of $\delta_\pi$, if $\delta_\pi = 1$ then $c \geq 2$ and $n(\omega) < n \leq c$. Thus,
  for $\delta_\pi = 1$
    \[
      \begin{aligned}
      0 &= \sum_{t \in 1+\varpi^{c-1}\CO_K/1+\varpi^c\CO_K} \chi(t)\\
        &= \sum_{t \in 1+\varpi^{c-1}\CO_K/(1+\varpi^{c-1}\CO)(1+\varpi^c \CO_K)}
	\chi(t)\sum_{a \in 1+\varpi^{c-1}\CO/ 1+\varpi^c\CO} \omega^{-1}(a) \\
        &= q \sum_{b \in \fp^{c-1}/\fp^c} \chi(1+b\tau),
      \end{aligned}
      \]
  Therefore, if $\delta_\pi = 1$, then $\sum_{t \in S_{c-1}} \chi(t) = -1$  and
  \[\beta^0 = \frac{\Vol(K^\times/F^\times)}{\# K^\times/F^\times\CO_c^\times}\cdot
    \begin{cases}
      1, \quad &\text{if $\delta_\pi = 0$}; \\
      L(1,1_F), \quad &\text{if $\delta_\pi = 1$}.
  \end{cases}\]
  The proof of Proposition \ref{p-adic-beta} is now complete.

  We finish our discussions on $\alpha(W_0)$, $\beta(f)$ and $\gamma$ by the following Lemmas \ref{p-adic} and \ref{archimedean}.
  \begin{lem}\label{p-adic}
    Let $F$ be nonarchimedean and $f$ a nonzero element  in $V(\pi,\chi)$ then
    \[\alpha(W_0)\beta(f)\gamma|D|^{-1/2} = 2^{\delta(\Sigma_D)}L(1/2,\pi,\chi)^{-\delta(\Sigma)}
    L(1,\eta)^{2\delta(c_1)}q^{-c_1},\]
    where these $\delta\in \{0,1\}$ defined by
    \begin{itemize}
      \item $\delta(\Sigma_D) = 1$ if and only if $K$ is ramified, $n>0$ and $c<n$;
      \item $\delta(\Sigma) = 1$ if and only if $n>0$, $K$ is either ramified or $c>0$ and
	if $n=1$, then $c \geq n$;
      \item $\delta(c_1) = 1$ if and only if $c_1 \not=0$.
    \end{itemize}
 \end{lem}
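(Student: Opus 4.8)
The assertion is a bookkeeping identity combining the three local quantities $\alpha(W_0)$, $\beta(f)$ and $\gamma$ evaluated in Proposition~\ref{local inner product}, Proposition~\ref{p-adic-beta} and Lemma~\ref{gamma}. The plan is first to absorb the normalizing powers of $|\delta|$ and $|D|$: setting $A:=\alpha(W_0)|\delta|^{1/2}$ and $B:=\beta(f)|D\delta|^{-1/2}$, one has $\alpha(W_0)\beta(f)\gamma|D|^{-1/2}=A\cdot B\cdot\gamma$, and $A$, $B$ are given explicitly by the cited results. The proof then proceeds case by case, following exactly the case division in Proposition~\ref{p-adic-beta}, and in each case one reads off the values of $\delta(\Sigma)$, $\delta(\Sigma_D)$, $\delta(c_1)$ and $c_1$ directly from their definitions.

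The first four cases are short. If $n=0$ (so $n=c=0$, or $n=0$ and $c>0$), then $\sigma$ is unramified, so $A=1$; the admissible order $R$ is maximal with $B$ split, so $\gamma=1$; and the claim reduces to the value of $B$ from Proposition~\ref{p-adic-beta}, using $\delta(\Sigma)=\delta(\Sigma_D)=0$ and $c_1=c$. If $n>0$ and either ($c=0$ with $K$ split) or ($nc>0$ with $K$ split or $c\geq n$), then $B=M_2(F)$ (Lemma~\ref{Tunnell-Satio}) and the admissible order is an Eichler order of positive level, hence $\kappa(R)=\kappa^2$, $e(R)=1$, and $\gamma=L(1,1_F)(1-q^{-1})=1$; since $\delta_\pi=\delta_\sigma$ and $L(1,\pi,\ad)=L(1,\sigma,\ad)$, the factor $L(1,\sigma,\ad)^{-\delta_\sigma}$ in $A$ cancels the factor $L(1,\pi,\ad)^{\delta_\pi}$ in $B$, the factors $L(2,1_F)^{\pm1}$ and $L(1,1_F)^{\mp1}$ cancel between $A$ and $B$, and the product collapses to $L(1,\eta)^{2\delta(c_1)}q^{-c_1}L(1/2,\pi,\chi)^{-\delta(\Sigma)}$; a direct inspection confirms that $\delta(\Sigma)$, $\delta(\Sigma_D)$ and $c_1$ take the asserted values.

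The substance of the lemma is the last case, $n>c$ with $K$ nonsplit, so that $c_1=0$, $\sigma$ is ramified, and $B$ may be split or division. Here I would write $A\cdot\gamma=L(2,1_F)\,(1-e(R)q^{-1})\,L(1,\sigma,\ad)^{-\delta_\sigma}$ (using $\gamma=L(1,1_F)(1-e(R)q^{-1})$, valid since $R$ is non-maximal or $B$ is division) and $B=e\,(1-q^{-e})\,L(1,\sigma,\ad)/L(1/2,\pi,\chi)$, where $e$ is the ramification index of $K/F$, so that $A\cdot B\cdot\gamma=L(2,1_F)\,(1-e(R)q^{-1})\,e\,(1-q^{-e})\,L(1,\sigma,\ad)^{1-\delta_\sigma}/L(1/2,\pi,\chi)$, and show this equals $2^{\delta(\Sigma_D)}L(1/2,\pi,\chi)^{-\delta(\Sigma)}$. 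This requires: computing the Eichler symbol $e(R)$ of the admissible order $R=\CO_K+I\CO_B$ of Lemma~\ref{order-uniqueness}, whose reduction $\kappa(R)$ is the residue field of $\CO_K$, giving $e(R)=-1$ if $K/F$ is inert and $e(R)=0$ if $K/F$ is ramified, with the degenerate case $n=1$, $B$ division, $R=\CO_B$ again giving $e(R)=-1$; observing that $\delta(\Sigma_D)=1$ precisely when $K/F$ is ramified, which matches the factor $e\in\{1,2\}$; using the identities $L(1,\eta)=1$ for ramified $\eta$, $L(1,\mathrm{sp}(2)\otimes\mu,\ad)=L(2,1_F)$, and the explicit form $L(1/2,\pi,\chi)=L(1/2,\pi_K\otimes\chi)$ to fold the remaining factors onto $L(1/2,\pi,\chi)^{-\delta(\Sigma)}$; and treating separately the sub-cases in which $\sigma$ is a ramified principal series, a special representation $\mathrm{sp}(2)\otimes\mu$, or a supercuspidal, reading off the compatible values of $\delta_\sigma$ and $\delta(\Sigma)$ in each (the clause ``if $n=1$ then $c\geq n$'' in the definition of $\delta(\Sigma)$ then forces $\delta(\Sigma)=0$ exactly when $n=1$, which is where the factor $2^{\delta(\Sigma_D)}$ without any surviving $L(1/2,\pi,\chi)$ comes from). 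The main obstacle is precisely this final case: pinning down the Eichler symbols of the non-Eichler orders $\CO_K+I\CO_B$ and of the division-algebra orders, and verifying that the resulting product of arithmetic factors telescopes to the clean right-hand side $2^{\delta(\Sigma_D)}L(1/2,\pi,\chi)^{-\delta(\Sigma)}L(1,\eta)^{2\delta(c_1)}q^{-c_1}$.
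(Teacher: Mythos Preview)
Your approach is exactly that of the paper: reduce to the previously computed values of $\alpha(W_0)$, $\beta(f)$, and $\gamma$, and then the only remaining content is the determination of the Eichler symbol $e(R)$ in the various cases. The paper's own proof is in fact nothing more than a three-line list of the possible values of $e(R)$.

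There is, however, one missing subcase in your computation of $e(R)$ in the final block ($K$ nonsplit, $n>c$). You assert that for the admissible order $R=\CO_K+I\CO_B$ one has $\kappa(R)$ equal to the residue field of $\CO_K$, hence $e(R)=0$ whenever $K/F$ is ramified, and you flag only $n=1$, $B$ division as degenerate. But when $K/F$ is ramified, $n=1$, $c=0$ and $B$ is \emph{split}, the ideal $I=\varpi_K\CO_K$ gives $R=\CO_K+\varpi_K M_2(\CO)=R_0(1)$, the Iwahori order (see the proof of Lemma~\ref{order-uniqueness}); its reduction is $\kappa(R_0(1))=\kappa\times\kappa$, so $e(R)=1$ and $\gamma=1$, not $e(R)=0$ and $\gamma=L(1,1_F)$. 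This case genuinely occurs: for $\sigma=\mathrm{sp}(2)\otimes\mu$ with $\mu$ unramified and $\chi$ unramified with $\mu_K\chi\neq 1$, Lemma~\ref{Tunnell-Satio}(3) forces $B$ split. With the wrong value of $\gamma$ your product would be off by a factor $(1-q^{-1})^{-1}$ in this subcase. The paper's proof avoids this by listing the $e(R)$ values directly: $e(R)=1$ when $K$ is split, or $K$ is ramified with $n=1$ and $B$ split, or $K$ nonsplit with $c\geq n$; $e(R)=-1$ when $K$ is inert with $c<n$, or $K$ is ramified with $n=1$, $c=0$, $B$ division; and $e(R)=0$ when $K$ is ramified with $n\geq 2$ and $c<n$. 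Once you insert this corrected case, the rest of your bookkeeping goes through.
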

 \begin{proof}
   We have computed $\alpha(W_0)$ in Proposition \ref{local inner product} and
   $\beta(f)$ in Proposition \ref{p-adic-beta}. When $n>0$, by Lemma \ref{gamma}, $\gamma = L(1,1_F)
   (1-e(R)q^{-1})$ and we reduce to compute $e(R)$:
   \begin{enumerate}
  \item[(i)] $e(R) = 1$ and $\gamma = 1$ if $K$ is split, or $K$ is ramified, $n=1$ and $B$ is split, or
    $K$ is nonsplit and $c \geq n$;
  \item[(ii)] $e(R) = -1$ and $\gamma = L(1,1_F)(1+q^{-1})$
   if $K$ is inert and $c<n$, or $K$ is ramified, $n=1$, $B$ is division and
    $c=0$;
  \item[(iii)] $e(R) = 0$ and $\gamma = L(1,1_F)$ if $K$ is ramified, $n \geq 2$ and $c < n$.
  \end{enumerate}

 \end{proof}

 For  archimedean places,
 using Barnes' lemma, we have the following list for $( W_0, W_0)$ (see \cite{Tadic} for
 the classification of unitary dual of $\GL_2(F)$):
 \begin{enumerate}
  \item Assume $F = \BR$, $\sigma$ is the infinite dimensional subquotient of the induced representation
    $\Ind(\mu_1,\mu_2)$ where $\mu_i(a) = |a|^{s_i}\sgn(a)^{m_i}$ with $s_i \in \BC$ and
    $m_i \in \{0,1\}$. Denote $k = s_1 - s_2 + 1$, $\mu = s_1 + s_2$.
    \begin{enumerate}
      \item If $\sigma = \CD_\mu(k)$ is the discrete series with $k \geq 2$, then $(W_0,W_0)$ equals
	\[2(4\pi)^{-k}\Gamma(k).\]
      \item If $\sigma = \pi(\mu_1,\mu_2)$  is a principal series, then $(W_0,W_0)$ equals
	\[\pi^{-1-m_1 - m_2}\Gamma(\frac{1+2m_1}{2})
	\Gamma(\frac{1+2m_2}{2}) B( \frac{k+m_1+m_2}{2}, \frac{2-k + m_1 + m_2}{2}),\]
      where $B(x, y):=\Gamma(x)\Gamma(y) \Gamma(x+y)^{-1}$ is the beta function.
      \end{enumerate}
   \item Assume $F = \BC$, $\sigma = \pi(\mu_1,\mu_2)$ is a principal series
     with $\mu_i(z) = |z|^{s_i}(z/\sqrt{|z|_\BC})^{m_i}$ and $s_i \in \BC$ and $m_i \in \BZ$, then
     $(W_0,W_0)$ equals
   	 \[\quad\quad\quad 8 (2\pi)^{-1-|m_1|-|m_2|}\Gamma(1+|m_1|)
	   \Gamma(1+|m_2|)B(1+s_1-s_2 +\frac{|m_1|+|m_2|}{2},\
	 1-s_1+ s_2 + \frac{|m_1| + |m_2|}{2}).\]
\end{enumerate}
For a pair $(\pi,\chi)$, define
\[C(\pi,\chi) =
   \begin{cases}
     2^{-1}\pi(W_0,W_0)^{-1}, \quad &\text{if $K/F = \BC/\BR$}; \\
     (W_0',W_0')(W_0,W_0)^{-1}, \quad &\text{if $K = F^2$}.
 \end{cases}\]
 In the split case, $W_0'$ is the new vector of $\pi\otimes\chi_1$ where $K$ is embedded  into
 $M_2(F)$ diagonally and $\chi_1(a)=\chi\left(\matrixx{a}{}{}{1}\right)$.

 \begin{lem}\label{archimedean} For $F$ archimedean,
 let $f$ be a nonzero vector in $V(\pi,\chi)$,  then
 \[ \alpha(W_0)\beta(f) =
   C(\pi,\chi)^{-1}\begin{cases}
     L(1/2,\pi,\chi)^{-1}, \quad &\text{if $K/F = \BC/\BR$};\\
     1, \quad &\text{if $K = F^2$}.
 \end{cases}\]
 In particular, if
 $\sigma = \CD_\mu(k)$ is a discrete series with weight $k$, then
 \[C(\pi,\chi) =
   \begin{cases}
     4^{k-1}\pi^{k+1}\Gamma(k)^{-1}, \quad &\text{if $K = \BC$}; \\
     1, \quad &\text{if $K = \BR^2$}.
 \end{cases}\]
\end{lem}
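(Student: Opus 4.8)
The plan is to reduce the claimed identity to a single ``bare'' local toric integral and then evaluate it separately in the split and nonsplit cases. Writing out the definitions of $\alpha(W_0)$ and of $\beta(f)=\beta(f,\bar f)$, and cancelling the common factors $L(1,\sigma,\ad)=L(1,\pi,\ad)$ and $L(2,1_F)$, one obtains
\[
\alpha(W_0)\beta(f)=\frac{(W_0,W_0)\,L(1,\eta)}{L(1,1_F)\,L(1/2,\pi,\chi)}\cdot\beta^0,\qquad
\beta^0:=\int_{F^\times\bs K^\times}\frac{(\pi(t)f,f)}{(f,f)}\,\chi(t)\,dt,
\]
for any nonzero $f\in V(\pi,\chi)$, this quantity being independent of the choice of $f$. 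So the task is to compute $\beta^0$, insert the archimedean $\Gamma$-factors $L(1,1_F)$ and $L(1,\eta)$, and compare with the definition of $C(\pi,\chi)$.

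\emph{Nonsplit case $K/F=\BC/\BR$.} Here $V(\pi,\chi)$ is the $\chi^{-1}$-eigenline of $U\cap K^\times$. Since $K^\times=F^\times\cdot(U\cap K^\times)$ by the polar decomposition of $\BC^\times$, and $\omega\cdot\chi|_{F^\times}=1$, any such $f$ is in fact $\chi^{-1}$-eigen under all of $K^\times$; hence $\beta^0=\Vol(F^\times\bs K^\times)=2$ by our normalization of Haar measures. Inserting $L(1,1_\BR)=\Gamma_\BR(1)=1$ and $L(1,\eta)=\Gamma_\BR(2)=\pi^{-1}$ then gives $\alpha(W_0)\beta(f)=2\pi^{-1}(W_0,W_0)\,L(1/2,\pi,\chi)^{-1}$, which is exactly $C(\pi,\chi)^{-1}L(1/2,\pi,\chi)^{-1}$ by the definition $C(\pi,\chi)=2^{-1}\pi(W_0,W_0)^{-1}$.

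\emph{Split case $K=F^2$.} Here $\eta$ is trivial, so $L(1,\eta)=L(1,1_F)$ cancels and it suffices to prove $\beta^0=L(1/2,\pi,\chi)/(W_0',W_0')$. Recall $V(\pi,\chi)$ is the new-vector line of $\pi\otimes\chi_1$, where $\chi_1=\chi|_{K^1}$ and $K^\times=F^\times K^1$ with $K^1$ mapping onto the $\diag(\ast,1)$-torus; take $f=W_0'$ the normalized new vector there. Writing $t\equiv\diag(y,1)$ modulo the center, realizing the matrix coefficient in the Whittaker model $\CW(\pi,\psi)$, and making the change of variables that unfolds the resulting double integral, $\beta^0$ factors as $|Z(1/2,W_f,\chi_1)|^2/(f,f)$, where $W_f\in\CW(\pi,\psi)$ is the Whittaker function of $f$. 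Since $W_f(g)=W_0^{\pi\otimes\chi_1}(g)\,\chi_1(\det g)^{-1}$, one gets $Z(1/2,W_f,\chi_1)=Z(1/2,W_0^{\pi\otimes\chi_1})=L(1/2,\pi\otimes\chi_1)$; moreover $|L(1/2,\pi\otimes\chi_1)|^2=L(1/2,\pi\otimes\chi_1)L(1/2,\pi\otimes\chi_2)=L(1/2,\pi,\chi)$ because $\chi_1\chi_2=\omega^{-1}$ on $F^\times$ forces $\pi\otimes\chi_2\cong\widetilde{\pi\otimes\chi_1}$, and $(f,f)=(W_0',W_0')$ since $\chi_1$ is unitary. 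Hence $\beta^0=L(1/2,\pi,\chi)/(W_0',W_0')$ and $\alpha(W_0)\beta(f)=(W_0,W_0)/(W_0',W_0')=C(\pi,\chi)^{-1}$.

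For the ``in particular'' statement I would substitute the explicit value $(W_0,W_0)=2(4\pi)^{-k}\Gamma(k)$ for a discrete series $\sigma=\CD_\mu(k)$ (from the Barnes'-lemma computation recorded just before the lemma): when $K=\BC$ this gives $C(\pi,\chi)=2^{-1}\pi(W_0,W_0)^{-1}=4^{k-1}\pi^{k+1}\Gamma(k)^{-1}$, and when $K=\BR^2$ the twist $\pi\otimes\chi_1$ is again a discrete series of weight $k$ whose new vector has Whittaker function $|y|^{k/2}e^{-2\pi|y|}\chi_1(y)$, of the same absolute value ($\chi_1$ being unitary), so $(W_0',W_0')=(W_0,W_0)$ and $C(\pi,\chi)=1$. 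I expect the main obstacle to be the bookkeeping in the split case --- keeping the twist $\pi\leftrightarrow\pi\otimes\chi_1$ consistent across the Whittaker model, the pairing $(\ ,\ )$, and the quotient measure on $F^\times\bs K^\times$, and verifying the identity $|L(1/2,\pi\otimes\chi_1)|^2=L(1/2,\pi,\chi)$; everything else is routine assembly of the definitions with the new-vector formulas already at hand.
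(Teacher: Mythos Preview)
Your proposal is correct and follows the same approach as the paper's proof: you reduce to the bare toric integral $\beta^0$ via the same cancellation, evaluate $\beta^0=\Vol(K^\times/F^\times)=2$ in the nonsplit case since $f$ is $\chi^{-1}$-eigen under all of $K^\times$, and in the split case take $f$ corresponding to the new vector $W_0'$ of $\pi\otimes\chi_1$ and unfold to get $\beta^0=L(1/2,\pi,\chi)/(W_0',W_0')$; the discrete-series values then follow from the explicit $(W_0,W_0)=2(4\pi)^{-k}\Gamma(k)$ and the weight-invariance of discrete series under character twists. The only difference is that you spell out the split-case unfolding and the identity $|L(1/2,\pi\otimes\chi_1)|^2=L(1/2,\pi,\chi)$ in detail, whereas the paper simply asserts $\beta^0=L(1/2,\pi,\chi)(W_0',W_0')^{-1}$, relying on the parallel nonarchimedean computation done earlier.
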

\begin{proof}
  By definition,
  \[\alpha(W_0)\beta(f) = \frac{L(1,\eta)}{L(1,1_F)}L(1/2,\pi,\chi)^{-1}(W_0,W_0)
  \beta^0,\]
  with

  \[\beta^0 = \int_{F^\times \bs K^\times} \frac{(\pi(t)f,f)}{(f,f)}
\chi(t)dt, \quad f \in V(\pi,\chi).\]
  If $K/F = \BC/\BR$, then $\beta^0 = \Vol(K^\times/F^\times) = 2$.  If $K$ is split,  taking $f=W_0'$,  then
  $\beta^0 = L(1/2,\pi,\chi)(W_0',W_0')^{-1}$. If $\sigma = \CD_\mu(k)$, the value
  for $(W_0,W_0)$ is in the above list (1a) and we note that if $K = \BR^2$, then
  $(W_0',W_0') = (W_0,W_0)$ as for any $\chi_1$, $\pi\otimes\chi_1$ and $\pi$
  have the same weight.
\end{proof}

\end{document}